\newtheorem{theorem}{Theorem}[section]
\newtheorem{claim}{Claim}[section]
\newtheorem{lemma}[theorem]{Lemma}
\newtheorem{proposition}{Proposition}
\theoremstyle{definition}
\newtheorem{definition}[theorem]{Definition}
\newtheorem{remark}{Remark}
\newcommand{\nwc}{\newcommand}
\nwc\eps{\varepsilon}
\nwc{\be}{\begin{equation}}
\nwc{\ee}{\end{equation}}
\nwc{\bee}{\begin{eqnarray}}
\nwc{\eee}{\end{eqnarray}}
\nwc{\ba}{\begin{array}}
\nwc{\ea}{\end{array}}
\nwc\bo{\mbox{Bo}}  
\nwc\calL{{\cal L}}
\nwc\calG{{\cal G}}
\newcommand\nd{\noindent}
\newcommand\I{{\mathcal{I}}}
\newcommand\R{\mathbb{R}}
\newcommand\T{\mathbb{T}}
\newcommand\C{\mathbb{C}}
\newcommand\Z{\mathbb{Z}}
\nwc\EN{{\mathcal{E}_1}}
\nwc{\dx}{\partial_x}
\nwc{\dy}{\partial_y}
\nwc{\hamone}{{\mathcal{H}}}
\nwc{\Imu}{\Sigma}
\nwc{\Iem}{\I}
\nwc{\Gep}{G}
\nwc{\wt}{\widetilde}
\newcommand{\sgn}{\mathop{\rm sgn}\nolimits}
\numberwithin{equation}{section}
\title[Controllability of a  long wave system]{The Well-posedness and Controllability of the Generalized Symmetric Regularized Long Wave System}
\author[Gallego]{F.A. Gallego}
\address{Departamento de Matem\'aticas y Estad\'istica, Universidad Nacional de Colombia - Sede Manizales, Colombia}
\email{fagallegor@unal.edu.co}
\author[Montes]{A.M. Montes}
\address{Universidad del Cauca}
\thanks{Research  supported by Universidad del Cauca and Colciencias}
 \email{amontes@unicauca.edu.co}
\subjclass{93B05;93B07; 93B12}
\keywords{Long wave model, spectral Fourier analysis, exact controllability, lack controllability}
\begin{document}

\begin{abstract}
The symmetric regularized long wave system (SRLW) is a model for the weakly nonlinear ion acoustic and space-charge
waves, which was introduced by C. Seyler and D. Fenstermacher. 
In this paper, we investigated the wellposedness and controllability properties of the generalized  symmetric regularized long wave system (g-SRLW) in different structures (periodic and bounded domains).  Firstly, the wellposedness and the exact controllability results for both linear and nonlinear g-SRLW system posed on the one-dimensional torus are obtained under the effect of a distributed moving control. Second, we consider the g-SRLW system in a bounded interval with some Dirichlet-Neumann conditions and we show that the system is not spectrally controllable (No finite linear combination of eigenfunctions associated with the state equations, other than zero, can be steered to zero). Although the system is not spectrally controllable, it can be shown that it is approximately
controllable.
\end{abstract}

\maketitle

\section{ Introduction}

\subsection{Setting of the Problem}

The Symmetric Regularized Long Wave (SRLW) model  was first derived by C. Seyler and D. Fenstermacher in \cite{S-F} to describe the propagation  of the weakly nonlinear  ion acoustic and space-charge waves, shallow water waves and solitary waves with bidirectional propagation, where a weakly nonlinear analysis of the cold-electron fluid equation is made:
\begin{equation}
\begin{cases}
\partial_{t}u-\partial_t\partial_x^2u+u\partial_xu-\partial_xv=0, 
\\ \label{srlw} 
\partial_{t}v-\partial_xu=0, 
\end{cases}
\end{equation}
where $u=u(x,t)$ and $v=v(x,t)$ are the fluid velocity and the density, respectively. 
In this work we are concerned with the generalized Symmetric Regularized Long Wave (g-SRLW) system, more
precisely,
\begin{equation}
\begin{cases}
\partial_{t}u-\partial_t\partial_x^2u+u^p\partial_xu-\partial_xv=0, 
\\ \label{gsrlw1} 
\partial_{t}v-\partial_xu=0,  \\
u(x,0)=u_0(x), \quad v(x,0)=v_0(x).
\end{cases}
\end{equation}
The density function can be removed from \eqref{gsrlw1} and then the equations turn to a single nonlinear equation for the velocity function. Thus, eliminating $v$ from  \eqref{gsrlw1}, we get a class of g-SRLW equation:
\begin{equation}\label{eq1}
\partial_t^2 u-\partial_x^2 u-\partial_x^2\partial_t^2 u+\frac{1}{p+1}\partial_{t}\partial_x ( u^{p+1})=0.
\end{equation}
Equation \eqref{eq1} is explicitly symmetric in the $x$ and $t$ derivatives and is very similar to the
regularized long wave equation that describes shallow water waves and plasma drift waves \cite{albert, S-F}. The SRLW equation \eqref{gsrlw1} or \eqref{eq1} arises also in many other areas of mathematical physics.

Our main aim is to address two mathematical issues connected with the initial-value
problem \eqref{gsrlw1} posed in different structures: The well-posedness and controllability properties. The main difficulty in this context comes from the structure of nonlinearities and the lack of regularity of the solutions
we are dealing with. In the literature there are some papers about the SRLW equation from the mathematical point of view. It is a special case of a broad class of nonlinear evolution equations for which the well-posedness theory associated with the pure initial value problem posed on the whole real line $\R$, or on a finite interval with periodic boundary conditions, has been intensively
investigated. In the work \cite{Chen1}, by using semigroup theory, the author showed that the initial value problem associated to the system \eqref{srlw}  is global well-posed in the Sobolev type space $H^1(\R) \times L^2(\R)$. Later on, in \cite{CB} improved this result, establishing the global well-posedness in the space  $H^1(\R) \times H^{s-1}(\R)$ for  $s \geq 0$.  On the periodic case, in \cite{CB}, using a estimative obtained by D. Roumégoux in \cite{R}, the proved that system \eqref{srlw} with initial data $(u_0,v_0) \in H^s_{per}([0,T]) \times H^{s-1}_{per}([0,T])$   is globally well-posed for $s\geq 0$.

By contrast, the mathematical theory pertaining to the study of the controllability properties 
is considerably less advanced to the equation \eqref{srlw}.  The program of this work was carried out for a particular choice of controls functions and appropriated domains to establish interesting controllability properties. As far as we know there are no results for the controllability of the g-SRLW system \eqref{gsrlw1}.  In this direction, we can refer the work \cite{cerpa2018}, where the authors analyze the controllability properties of the improved Boussinesq equation posed on a bounded interval and one-dimensional torus, respectively. From the mathematical point of view, the improve Boussinesq and the SWRL equation \eqref{eq1} are similar, since both equations have the same linear system around zero. However, in general, it is not clear that the solution of linear equation  associated to \eqref{eq1} is also solution of the linear system associated to \eqref{gsrlw1} at least in some structures, for instance on bounded domains with certain boundary conditions. Then, we are concern directly with the well-posedness and controllability properties of the system \eqref{gsrlw1} and not in equation \eqref{eq1}. Thus, this work is devoted to prove a series of results concerning the controllability  properties of the g-SRLW equation \eqref{gsrlw1}  in different environments.  The purpose is to see whether one can force the solutions of those systems to have certain desired properties by choosing appropriate control inputs. Consideration will
be given to the following fundamental problems that arises in control and wellposedness theories:

\vglue 0.4cm

\emph{\textbf{Problem $\mathcal{A}$:}} 
{\em 
Initially, we consider the g-SRLW equation posed in a periodic domain, under effect of internal control posed on the torus.  More precisely, we want to answer the following question: Can we find a kind control $f = f(x, t)$
such that the solution of system 
\begin{equation}\label{p1}
\begin{cases}
\partial_{t}u-\partial_x^2\partial_tu+u^p\partial_x u-\partial_xv=f,  & \ \ \ x\in \T,\ \  t\in \R, \\
\partial_{t}v-\partial_xu=0, & \ \ \ x\in \T,\ \  t\in \R, \\
u(x,0)=u_0(x), \,\, v(x,0)=v_0(x), & \ \ \ x\in \T
\end{cases}
\end{equation}
is exactly  controllable?, it means one can always find control inputs to guide the system from any given initial state $(u_0, v_0)$ to any given terminal state $(u_T, v_T)$ in time $T>0$, i.e, given initial and final data $(u_0,v_0)$ and $(u_T,v_T)$, respectively, there exists a control $f$ in some appropriated space such that the corresponding solution $(u,v)$ of \eqref{p1}  satisfies 
\begin{equation*}
u(x,T)=u_{T}(x) \quad\text{and}\quad v(x,T)=v_{T}(x).
\end{equation*}}

Moreover, to complete the analysis, we study the case of the initial boundary value problem related to the controllability in a bounded interval with some boundary condition: 
\vglue 0.2cm 
\emph{\textbf{Problem $\mathcal{B}$:}} 
{\em Second, we are interesting to study  controllability aspects for the g-SRLW equation \eqref{gsrlw1} posed in a bounded interval. Is there some kind of boundary controllability for the g-SRLW in a bounded domain?, More precisely, for any $(u_0,v_0)$ and $(u_T,v_T)$, is there exist a control $h$ in some appropiated spaces such that the solution of:
\begin{equation}\label{p12}
\begin{cases}
\partial_{t}u-\partial_t\partial_x^2u+u^p\partial_x u-\partial_xv=0, & (x,t) \in (0,1)\times (0,T), \\ 
\partial_{t}v-\partial_xu=0,  & (x,t) \in (0,1)\times (0,T), \\
u(0,t)=0,  \quad u(1,t)=h(t), & t \in (0,T), \\
\partial_x v(0,t)=0,  \quad \partial_xv(1,t)=0, & t \in (0,T), \\
u(x,0)=u_0(x), 	\quad v(x,0)=v_0(x), & x \in (0,1),
\end{cases}
\end{equation}
satisfies
\begin{equation*}
u(x,T)=u_{T}(x) \quad\text{and}\quad  v(x,T)=v_{T}(x)?
\end{equation*}}
Before presenting an answer for problems $\mathcal{A}$  and $\mathcal{B}$, it is necessary to investigate the wellposedness of the full system \eqref{p1} and/or \eqref{p12}. Thus, the following issue appears naturally:
\vglue 0.3cm

\emph{\textbf{Problem $\mathcal{C}$:}} 
{\em 
Are the Nonhomogeneous Generalized Symmetric Long Wave Equation well-posed in time in the structures of problems $\mathcal{A}$ and/or $\mathcal{B}$, respectively?}
\vglue 0.2cm

Whit the above information in hand  and following the ideas contained in \cite{cerpa2018, micu2009, rosier}, we get some  answers to problems $\mathcal{A}$, $\mathcal{B}$ and $\mathcal{C}$, respectively.

\subsection{Main Results and Notations} 

We are now in position to present our main results. Hence, we will consider two cases corresponding to above problems, the system posed in the torus and a bounded interval with some boundary condition, respectively. 

\subsubsection{\textbf{Periodic Case}}
In order to give an answers to problems $\mathcal{A}$ and $\mathcal{C}$, respectively, we establish a moving local controllability for the system \eqref{p1}. Thus, let us introduce some Hilbert spaces. We denote by $X^s= X^s(T)$ the space defined on the torus $\T=\R /(2n\Z)$, such that for any $s \in \R$
and $T > 0$ fixed, we use the notation $X^s := H^s(\T) \times H^{s-2}(\T)$, where the periodic Sobolev space $H^s(\T)$ is given by
$$
H^s(\T)=\Bigl\{u(x)=\sum_{k\in \mathbb Z}u_ke^{ikx}  \  \    :  \   \  \|u\|_{H^s(\T)}^2= \sum_{k\in \mathbb Z}(1+|k|^2)^{s}  |u_k|^2 \,  < \, +\infty  \Bigr\}, 
$$
where $u_k=\widehat{u}(k)$ denotes the $k$-Fourier coefficient with respect to the spatial variable $x$. Thus, $X^s$ is a Hilbert space equipped with the norm 
\begin{equation}
\|(u,v)\|_{X^s}:= \left(\|u\|_{H^s(\T)}^2+ \|v\|_{H^{s-2}(\T)}^2\right)^{1/2}.
\end{equation}

In order to give answer to problem $\mathcal{A}$,  we shall consider several control problems in a periodic domain related to the moving controllability. The bad control properties from g-SRLW system come from the existence of a finite accumulation point in the spectrum of the differential operator associated to equation. Note that the bad control properties come from the term $-\partial_x^2\partial_tu$, hence it is natural to ask
whether better control properties for g-SRLW system could be obtained by using a moving control. Such a phenomenon was noticed first by D. Russell in \cite{rusell1985} for the beam equation
with internal damping, by G. Leugering and E. J. P. G. Schmidt in \cite{leu1989} for the plate equation with internal damping, and by S. Micu in \cite{micu} for the linearized Benjamin-Bona-Mahony (BBM) equation. The concept of moving point control was introduced by J. L. Lions in \cite{lions_3}  for the wave equation. One important motivation for this kind of control is that the exact controllability of
the wave equation with a pointwise control and Dirichlet boundary conditions fails if the point is
a zero of some eigenfunction of the Dirichlet Laplacian, while it holds when the point is moving
under some (much more stable) conditions easy to check (see e.g. \cite{castro}).

Initially, we are concern with the exact controllability  with the moving  distributed control  of the nonlinear system posed on a periodic domain and it read as 
\begin{equation}\label{moving1}
\begin{cases}
\partial_{t}u-\partial_x^2\partial_tu+u^p\partial_x u-\partial_xv=b(x+ct)h(x,t),  & \ \ \ x\in \T,\ \  t\in \R, \\
\partial_{t}v-\partial_xu=0, & \ \ \ x\in \T,\ \  t\in \R, \\
u(x,0)=u_0(x), \,\, v(x,0)=v_0(x), & \ \ \ x\in \T.
\end{cases}
\end{equation}
where $c \in \R\setminus \{0\}$,  $b=b(x) \in C^{\infty}(\T)$ and $h$ is the control function. This control time has to be chosen in such a way that the support of the control, which is moving at the constant velocity $c$, can visit all the domain $\T$.  More precisely, in terms of controllability of system \eqref{moving1}, we have the following definition:
\begin{definition}
Let $T > 0$. System \eqref{lp} is exact controllable in time $T$ if for any initial and final data $(u_0,v_0)$ and $(u_T,v_T)$ in $X^s$, there exists control function  $h$ in $L^2(0,T;H^{s-2}(\T))$,  such that the solution satisfies  
\begin{equation}\label{control}
u(x,T)=u_T(x) \quad \text{and} \quad v(x,T)=v_T(x).
\end{equation}
\end{definition}
\vglue 0.2cm
The main result of the exact controllability of system \eqref{moving1} is the following:

\begin{theorem}\label{mainmovilcontrol}
 Suppose $s\geq 0$ if $p=1$ and  $s>\frac12$ if $p>1$. Let $b=b(x) \in C^{\infty}(\T)$ be such that 
$\omega =\left\lbrace x \in \T: b(x)\neq 0 \right\rbrace \neq \emptyset,$
 $|c|>2$ and $T > \frac{2\pi}{\Delta}$ where    
\begin{align*}
\Delta = \liminf_{k \rightarrow \infty} \left( \mu_{k+1}^{\pm}-\mu_k^{\pm}\right), \quad \mu_k^{\pm} = ck \pm \frac{|k|}{\sqrt{1+k^2}}.
\end{align*} 
Then, there exists a $\delta >0$ such that if 
\begin{equation*}
\|(u_0, v_0)\|_{X^s}+ \| (u_T, v_T)\|_{X^s} < \delta, 
\end{equation*}
one can find a control input  $ h\in L^2\left(0,T; H^{s-2}(\T)\right)$ such that  the system \eqref{moving1} admits a unique solution $(u,v) \in C\left([0,T], X^{s}\right)$ satisfying
\begin{equation}\label{contcond}
  u(x, T)= u_T(x) \quad \text{and} \quad  v(x,T)= v_T(x).
\end{equation}
\end{theorem}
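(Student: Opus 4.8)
The plan is to follow the now-classical moving-control scheme (Russell, Micu, Rosier, Cerpa et al.): first establish exact controllability for the linearized system around zero, then recover the nonlinear result by a fixed-point argument using the smallness of the data. For the linear part I would diagonalize the spatial operator. Writing $u=\sum_k u_k e^{ikx}$, $v=\sum_k v_k e^{ikx}$, the homogeneous linear g-SRLW system decouples in each Fourier mode into a $2\times2$ ODE system whose eigenvalues are $i\mu_k^{\pm}$ with $\mu_k^{\pm}=\pm|k|/\sqrt{1+k^2}$ (the frequencies of the fixed-domain problem); the moving control $b(x+ct)h(x,t)$ shifts these to $\mu_k^{\pm}=ck\pm|k|/\sqrt{1+k^2}$ once one changes variables to the moving frame $y=x+ct$. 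The hypothesis $|c|>2$ guarantees that the family $\{\mu_k^{\pm}\}$ consists of \emph{distinct} real numbers with no finite accumulation point, and the gap condition $T>2\pi/\Delta$ is exactly what is needed for an Ingham-type inequality.

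\textbf{Key steps.} First, I would rewrite \eqref{moving1} (with $u^p\partial_x u$ dropped) in the moving frame, obtaining a system with a stationary control operator but time-independent generator, and expand everything in the Fourier basis $\{e^{iky}\}$. Second, I would prove the observability inequality for the adjoint system: for the adjoint state one gets a series $\sum_k c_k^{\pm} e^{i\mu_k^{\pm} t} e^{iky}$, and using that the exponents $\mu_k^{\pm}$ satisfy the uniform gap condition $\liminf(\mu_{k+1}^{\pm}-\mu_k^{\pm})=\Delta$ together with $T>2\pi/\Delta$, Ingham's inequality (or a Beurling–Malliavin / nonharmonic Fourier series argument, plus a finite-dimensional unique-continuation step to handle any finitely many exponents that may be close) yields
\[
\int_0^T \|B^*\varphi(t)\|_{H^{-(s-2)}}^2\,dt \;\geq\; C\,\|\varphi(0)\|_{X^{-s}}^2 ,
\]
where one must also check that the multiplication operator by $b$ does not annihilate any mode — this follows from $\omega\neq\emptyset$ and analyticity/Fourier-support arguments (here the combination of all Fourier modes over the nonempty open set $\omega$ matters, the classical Micu trick). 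Third, by the Hilbert Uniqueness Method this observability estimate gives exact controllability of the linear system in $X^s$ with control $h\in L^2(0,T;H^{s-2}(\T))$, and defines a bounded linear solution-plus-control operator. Fourth, I would set up the nonlinear problem as a fixed point: given $(u,v)$, solve the linear control problem with right-hand side incorporating $-\tfrac{1}{p+1}\partial_x(u^{p+1})$ (or $-u^p\partial_x u$) as a source term, and show the map is a contraction on a small ball in $C([0,T],X^s)$. This is where the restriction $s\geq0$ for $p=1$ and $s>1/2$ for $p>1$ enters: one needs the algebra/product estimate $\|u^p\partial_x u\|_{H^{s-2}}\lesssim \|u\|_{H^s}^{p+1}$, which for $p>1$ requires $H^s(\T)$ to be a Banach algebra-type space, i.e. $s>1/2$, while for $p=1$ the bilinear estimate survives down to $s=0$ by a Roumégoux-type estimate analogous to the one cited for well-posedness.

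\textbf{Main obstacle.} The delicate point is the observability inequality, specifically controlling the \emph{spectral clustering}: the exponents $\mu_k^{\pm}=ck\pm|k|/\sqrt{1+k^2}$ approach $ck\pm1$ as $k\to\pm\infty$, so $\mu_k^+$ and $\mu_{k'}^-$ can be arbitrarily close when $c$ is such that $ck+1\approx ck'-1$; the condition $|c|>2$ is precisely designed so that these near-resonances are isolated (only finitely many pairs are within any fixed distance) and the asymptotic gap $\Delta$ is strictly positive. Handling the finitely many quasi-resonant exponents requires a separate finite-dimensional argument (grouping them and using that the corresponding exponential sums still span, via a Vandermonde/unique-continuation argument using $b\not\equiv0$), after which Ingham's theorem applies to the remaining well-separated tail. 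The second, more routine but still technical, obstacle is propagating the gain of regularity correctly so that the control built by HUM lands in $L^2(0,T;H^{s-2}(\T))$ rather than a weaker space, which forces a careful choice of the pivot duality $X^s$–$X^{-s}$ and of the functional setting for the adjoint problem.
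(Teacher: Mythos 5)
Your overall architecture (exact controllability of the linearized system followed by a small-data fixed point) matches the paper's, but your linear step goes by a genuinely different route. The paper never proves an observability inequality and does not invoke HUM: after passing to the moving frame it solves the moment problem directly, taking a biorthogonal family $\{q_m^{\pm}\}\subset L^2(0,T)$ to the exponentials $e^{i(kc\mp |k|/\sqrt{1+k^2})t}$ (whose existence under $|c|>2$ and $T>2\pi/\Delta$ is quoted from Cerpa--Cr\'epeau rather than reproved), and writing the control explicitly as $\widetilde h(x,t)=\sum_{m}\bigl(f_m^{+}q_m^{+}(t)+f_m^{-}q_m^{-}(t)\bigr)e^{imx}$ with closed-form coefficients $f_k^{\pm}$ given in terms of the Fourier coefficients of $(u_T,v_T)$. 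The quantitative bound $\|h\|_{L^2(0,T;H^{s-2}(\mathbb{T}))}\le C\|(u_T,v_T)\|_{X^s}$ is then read off directly from these formulas; this is what makes the control operator $\Phi$ bounded and feeds the contraction map $\Psi(U)=S(t)U_0-\int_0^tS(t-\tau)G(U)\,d\tau+\int_0^tS(t-\tau)\bigl[B\Phi(U_0,U_T+w(U,T))\bigr]\,d\tau$, which is essentially identical to your fourth step (including the use of the Roum\'egoux-type product estimate, which explains the thresholds $s\ge0$ for $p=1$ and $s>1/2$ for $p>1$). Your Ingham/HUM route is a legitimate alternative---biorthogonal families and Ingham-type inequalities are two faces of the same nonharmonic Fourier analysis---but it leaves more to verify (the observability constant, and the fact that multiplication by $b$ couples all spatial modes), whereas the explicit moment solution delivers the control bound with no extra work.

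One concrete caveat: your assertion that $\omega\neq\emptyset$ alone prevents the multiplier from annihilating modes ``by the Micu trick'' is precisely the delicate point, and your sketch does not actually establish it. In the paper's construction the biorthogonality in time forces the resonant pairing $m=k$, so only the mean of $b$ survives the $x$-integration and the explicit coefficients divide by $\int_{\mathbb{T}}b\,dx$; the argument therefore really uses $\int_{\mathbb{T}}b\,dx\neq0$ (the paper asserts this ``by hypothesis''). In an observability proof the same quantity reappears, so you would either need to assume a nonvanishing mean (e.g. $b\ge0$, $b\not\equiv0$, as in Rosier--Zhang) or genuinely analyze the cross-mode coupling induced by $b$, neither of which is done in your outline.
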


This controllability result is based in a spectral analysis of the differential operator associated to the model in direction to resolve a moment problem for the linear SRLW system.  Next, we prove the exact controllability results for the full nonlinear system guarantying the existence of a bounded linear operator $\Phi$ from the initial/end state pair $(u_0,
v_0)$, $(u_{ T},  v_{T})$, each in the space $X^s$, to the
corresponding control in the space $L^2([0, T],H^{s-2}(\T))$,  by imposing smallness of the initial and terminal states. 

Moreover, we turn our attention to some internal control acting on a single moving point:

\begin{equation}\label{moving2}
\begin{cases}
\partial_{t}u-\partial_x^2\partial_tu-\partial_xv=g(t)\delta(x+{ct}),  & \ \ \ x\in \T,\ \  t\in \R, \\
\partial_{t}v-\partial_xu=0, & \ \ \ x\in \T,\ \  t\in \R, \\
u(x,0)=u_0(x), \,\, v(x,0)=v_0(x), & \ \ \ x\in \T.
\end{cases}
\end{equation}
where \(\delta\left(x-x_{0}\right)\) represents the Dirac measure at \(x_{0}\), defined by \(\left\langle\delta\left(x-x_{0}\right), \varphi\right\rangle=\varphi\left(x_{0}\right)\) for any test function \(\varphi .\) Here and in what follows, \(\langle\cdot, \cdot\rangle\) stands for the duality pairing \(\langle\cdot, \cdot\rangle_{\mathcal{D}^{\prime}(\mathbb{T}), \mathcal{D}(\mathbb{T})} \cdot\) In particular, \(\langle f, \varphi\rangle=\int_{\mathbb{T}} f(x) \varphi(x) d x\) for any \(f \in L^{1}(\mathbb{T})\) and any \(\varphi \in C^{\infty}(\mathbb{T}).\) Then, we will obtain the following result related to the exact controllability of the system \eqref{moving2}.

\begin{theorem}\label{mainmovilcontrol2}
For any time \(T>2 \pi\) and  any $(u_0,v_0), (u_T,v_T) \in H^{-1}(\T)\times L^2(\T)$, there exists a control \(g \in L^{2}(0, T)\) such that the solution of \eqref{moving2} satisfies 
$$u(T,x)=u_T(x) \quad \text{and} \quad v(T, x)=v_T(x).$$
\end{theorem}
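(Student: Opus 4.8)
The plan is to reduce the exact controllability of the linear system \eqref{moving2} to a trigonometric moment problem and to solve it by means of a biorthogonal family in $L^2(0,T)$, in the spirit of \cite{cerpa2018,micu2009}. First I expand in Fourier series, $u(x,t)=\sum_{k\in\Z}u_k(t)e^{ikx}$ and $v(x,t)=\sum_{k\in\Z}v_k(t)e^{ikx}$, and use that the Dirac mass satisfies $\widehat{\delta(\cdot+ct)}(k)=\tfrac{1}{2\pi}e^{ikct}$. On each mode $k$ the system \eqref{moving2} becomes a nonhomogeneous $2\times2$ linear ODE whose homogeneous part has characteristic values $\pm i\omega_k$ with $\omega_k=\tfrac{|k|}{\sqrt{1+k^2}}$; performing the change of unknown that removes the factor $e^{ikct}$ (equivalently, passing to the travelling variable $y=x+ct$) turns these into $i\mu_k^{\pm}$ with $\mu_k^{\pm}=ck\pm\omega_k$, exactly the frequencies appearing in Theorem \ref{mainmovilcontrol}. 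The mode $k=0$ must be handled apart: integrating the equations over $\T$ gives $\tfrac{d}{dt}\int_{\T}v\,dx=0$, so the zeroth Fourier coefficient of $v$ is conserved and the obvious compatibility is imposed (or one works on the invariant subspace), while $\tfrac{d}{dt}\int_{\T}u\,dx=g(t)$ fixes the zeroth moment $\int_0^T g$.

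Next, using Duhamel's formula on each mode and decomposing the forcing along the eigenvectors $(1,\pm\sgn(k)\sqrt{1+k^2})$ of the mode-$k$ matrix, I show that steering $(u(\cdot,0),v(\cdot,0))=(u_0,v_0)$ to $(u(\cdot,T),v(\cdot,T))=(u_T,v_T)$ is equivalent to finding $g\in L^2(0,T)$ with
\begin{equation*}
\int_0^T g(t)\,e^{i\mu_k^{\pm}t}\,dt=d_k^{\pm},\qquad k\in\Z,
\end{equation*}
where the right-hand side depends linearly and boundedly on $(u_T,v_T)-S(T)(u_0,v_0)$, $S(t)$ being the group generated by the free system, and, in the functional setting of the statement, is square-summable; here it is crucial that the Fourier coefficients of $\delta$ do not decay. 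To solve this moment problem I invoke the spectral analysis underlying Theorem \ref{mainmovilcontrol}: for the relevant choice of $c$ the numbers $\mu_k^{\pm}$ are pairwise distinct and, since $\omega_{k+1}-\omega_k\to0$, they satisfy the separation/gap condition which, for $T>2\pi$, yields Ingham's inequality for $\{e^{i\mu_k^{\pm}t}\}_{k}$ in $L^2(0,T)$. Hence this family is a Riesz sequence and admits a biorthogonal family $\{\Psi_k^{\pm}\}\subset L^2(0,T)$ with $\sup_k\|\Psi_k^{\pm}\|_{L^2(0,T)}<\infty$.

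Finally I set $g=\sum_{k}\big(d_k^{+}\Psi_k^{+}+d_k^{-}\Psi_k^{-}\big)$ plus the finite correction fixing the $k=0$ moment; since $(d_k^{\pm})\in\ell^2$ and $\{\Psi_k^{\pm}\}$ is a Bessel system, the series converges in $L^2(0,T)$ with $\|g\|_{L^2(0,T)}\lesssim\|(u_T,v_T)-S(T)(u_0,v_0)\|$, and biorthogonality makes $g$ solve all the moment equations, so the corresponding solution of \eqref{moving2} satisfies $u(\cdot,T)=u_T$ and $v(\cdot,T)=v_T$; linearity also gives the boundedness of the control map. The hard part is the spectral step: because $\omega_k\to1$, the characteristic values $\pm i\omega_k$ of the free operator accumulate at $\pm i$ — the finite accumulation point responsible for the bad control properties — so the travelling speed must be taken large enough that the shifted frequencies $ck\pm\omega_k$ genuinely separate; establishing the uniform gap (the convergence $\omega_{k+1}-\omega_k=O(|k|^{-3})$ is slow, and finitely many low-mode gaps are small) and then bounding the control cost, i.e. $\|\Psi_k^{\pm}\|_{L^2(0,T)}$, uniformly in $k$ is the delicate point; matching the spaces so that $(d_k^{\pm})$ is exactly square-summable also requires care.
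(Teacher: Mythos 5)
Your proposal follows essentially the same route as the paper: reduce \eqref{moving2} to the trigonometric moment problem with frequencies $ck\pm\tfrac{|k|}{\sqrt{1+k^2}}$ and solve it with the biorthogonal family already used for Theorem \ref{l-pro}; the only difference is that you derive the moment equations by mode-by-mode Duhamel and eigenvector decomposition, while the paper obtains the identical equations \eqref{deltaprima1} by testing against the explicit exponential solutions of the adjoint system \eqref{lp2'} via Lemma \ref{lemmacontrol_1}. That difference is cosmetic, and your extra remarks are in fact sharper than the source on two points the paper passes over silently: the conservation of $\int_{\T}v\,dx$ (which forces the compatibility $\widehat{v_T}(0)=\widehat{v_0}(0)$ absent from the statement of Theorem \ref{mainmovilcontrol2}), and the fact that the gap of the shifted frequencies requires a hypothesis on $c$ that the statement does not display but the proof of Theorem \ref{l-pro} uses ($|c|>2$, distinctness of the $\mu_k^{\pm}$).

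The one step you assert that does not check out is the claim that the moment data are square-summable ``in the functional setting of the statement.'' Your own per-mode computation shows the control enters mode $k$ through the factor $(1+k^2)^{-1}$, so the right-hand sides of the moment equations carry the weights $(1+k^2)$ on the $u$-data and $\sqrt{1+k^2}$ on the $v$-data, exactly as in \eqref{deltaprima1}; since for $g\in L^2(0,T)$ the moments against a uniformly separated exponential family are necessarily $\ell^2$, summability requires the target minus the free evolution to lie in $H^2(\T)\times H^1(\T)$, not merely $H^{-1}(\T)\times L^2(\T)$, and the non-decay of $\widehat{\delta}$ cannot compensate for this. To be fair, this is not a defect you introduced: the paper's proof makes the identical unexamined leap when it declares that the series \eqref{controlpuntual} with the coefficients $\alpha_m^{\pm}$ (which contain the same $(1+k^2)$ and $\sqrt{1+k^2}$ factors) belongs to $L^2(0,T)$. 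So your argument is at the same level as the published one, but if you want a complete proof you must either verify the $\ell^2$ bound on the $d_k^{\pm}$ in a correctly weighted state space or restrict the admissible targets accordingly; as written, that verification is the missing step, not the biorthogonal construction.
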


Again, the exact controllability of the linearized system \eqref{moving2} are proved by using the moment method and spectral analysis. In order to get the same result for the nonlinear equation, we use a fixed point argument in Hilbert spaces. Finally, in order to give answer to problem $\mathcal{C}$, the well-posedness of the two problems is also established. Indeed,  we prove the local well-posedness of the nonlinear nonhomegeneus system in $X^s$, by using the Banach fixed  point Theorem and appropriate linear and nonlinear estimates with the bilinear estimative obtained by D. Roum\'egoux in \cite{R}.

\subsubsection{\textbf{Bounded Case}}

In this part, we address the problems described in the previous subsection and our main results provide a   answer for the problem $\mathcal{B}$.  In this sense, the approximate controllability and the lack of exact controllability are
proved. Hence,  we are concerned with the boundary controllability properties of the linearized symmetric regularized long wave system  posed on the finite interval $[0, 1]$. Given a time $T > 0$, we consider linear  problem:

\begin{equation}\label{linear1}
\begin{cases}
\partial_{t}u-\partial_t\partial_x^2u-\partial_xv=0, & (x,t) \in (0,1)\times (0,T), \\ 
\partial_{t}v-\partial_xu=0,  & (x,t) \in (0,1)\times (0,T), \\
u(x,0)=u_0(x), 	\quad v(x,0)=v_0(x), & x \in (0,1),
\end{cases}
\end{equation}
with Diriclhet-Neumann boundary conditions
\begin{equation}\label{linear2}
\begin{cases}
u(0,t)=0,  \quad u(1,t)=h(t), & t \in (0,T), \\
\partial_x v(0,t)=0,  \quad \partial_x v(1,t)=0, & t \in (0,T). \\
\end{cases}
\end{equation}
Hence, in order to establish some controllability aspects, we would like to found  a control function $h = h(t)$ on an adequate space. 

We begin our analysis by providing a negative result for the problem $\mathcal{B}$ introduced
above: The system \eqref{linear1}-\eqref{linear2} is not spectrally controllable if $(u_0,v_0) \in H^1_0(0,1)\times L^2(0,1)$.  This means that no finite linear nontrivial combinations of eigenvector of the differential operator associated to \eqref{linear1} can be driven to zero in finite time by using a control $h$. Thus, the lack controllability of the system \eqref{linear1}-\eqref{linear2} breaches the controllability principle. As it will become clear during our proofs, the bad control property comes from the existence of a limit point in the
spectrum of the operator associated with the state equations. To obtain the results, we make use of the
careful spectral analysis developed in next section, which provides important developments to
justify the use of eigenvector expansions for the solutions, as well as, the asymptotic
behavior of the eigenvalues.

Thus, we establish the following result related to the lack controlalbility of the linear SRLW system posed in a bounded interval:

\begin{theorem}\label{noncontrollable}
The control system \eqref{linear1}-\eqref{linear2} is not spectrally controllable in $ H^1_0(0,1)\times L^2(0,1).$
\end{theorem}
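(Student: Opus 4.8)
The plan is to follow the classical route for control systems whose generator has a finite accumulation point in its spectrum (in the spirit of \cite{micu,cerpa2018,rosier}): first reduce the problem of steering a finite linear combination of eigenfunctions to zero to a moment problem for the control $h$, and then exploit the accumulation of the eigenvalues to force that any such $h$ be identically zero, which in turn forces the combination itself to vanish.

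First I would record the spectral data, which is precisely the content of the spectral analysis carried out in the next section. Writing $U=(u,v)$, the homogeneous system \eqref{linear1}--\eqref{linear2} (with $h\equiv0$) takes the abstract form $U_t=\A U$ on $X:=H^1_0(0,1)\times L^2(0,1)$, where $\A(u,v)=\big((1-\partial_x^2)^{-1}\partial_x v,\ \partial_x u\big)$, the operator $1-\partial_x^2$ being understood with homogeneous Dirichlet data. A direct computation shows that $\A$ is skew--adjoint with respect to the equivalent inner product $\langle(u,v),(\wt u,\wt v)\rangle=\int_0^1(u\wt u+u_x\wt u_x+v\wt v)\,dx$, so it admits an orthonormal eigenbasis; separating variables, one finds the eigenvalues $\lambda=0$ and
\begin{equation*}
\lambda_k^{\pm}=\pm\,i\,\frac{k\pi}{\sqrt{1+k^2\pi^2}},\qquad k\ge1,
\end{equation*}
with eigenfunctions $\Phi_k^{\pm}=\big(\sin(k\pi x),\ \tfrac{k\pi}{\lambda_k^{\pm}}\cos(k\pi x)\big)$, which (after normalization) form a Riesz basis of $X$. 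The decisive feature is that $\lambda_k^{\pm}\to\pm i$ as $k\to\infty$: the spectrum has the two finite accumulation points $\pm i$ on the imaginary axis, and this is the source of the obstruction.

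Next I would set up the moment problem. Let $(u_0,v_0)=\sum_{k\in J}a_k\Phi_k$ be a finite linear combination of eigenfunctions (I let $k$ run over a common index set and write $\lambda_k$, $a_k$, with $a_k=0$ for $k\notin J$), and suppose some admissible control $h$ steers it to $0$ at time $T$. After lifting the boundary datum $u(1,t)=h(t)$ into the interior and expanding the solution in the eigenbasis, the $k$-th coordinate $\alpha_k(t)$ satisfies $\dot{\alpha}_k=\lambda_k\alpha_k+\beta_k h(t)$ with $\alpha_k(0)=a_k$ and, by hypothesis, $\alpha_k(T)=0$; solving this scalar ODE gives the moment conditions
\begin{equation*}
\beta_k\int_0^T h(t)\,e^{-\lambda_k t}\,dt=-a_k\qquad\text{for every }k,
\end{equation*}
where $\beta_k$ is the coupling of the control to $\Phi_k$, equal up to a nonzero constant coming from the inner product to the boundary trace $\partial_x(\sin k\pi x)|_{x=1}=k\pi(-1)^k$; in particular $\beta_k\ne0$ for all $k$ (the finitely many low modes, including $\lambda=0$, being inspected separately). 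Hence $\int_0^T h(t)e^{-\lambda_k t}\,dt=0$ for every $k\notin J$.

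Finally I would conclude by an entire--function argument. The map $F(z):=\int_0^T h(t)\,e^{-zt}\,dt$ is an entire function (of exponential type at most $T$); by the previous step it vanishes at $\lambda_k$ for every $k\notin J$. Since $\{\lambda_k^{\pm}\}_{k\notin J}$ still accumulates at $\pm i$, $F$ has a sequence of zeros with a finite accumulation point, whence $F\equiv0$; consequently every moment $\int_0^T h(t)\,t^n\,dt$ vanishes and so $h\equiv0$ on $(0,T)$. Plugging this back into the moment conditions for $k\in J$ yields $a_k=0$ for all $k\in J$, i.e. $(u_0,v_0)=0$. Thus no nontrivial finite combination of eigenfunctions can be driven to zero, which is exactly the claim. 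I expect the genuine difficulty to lie in Step 1 and in the derivation of the $\beta_k$, not in this last step: one must make the $H^1_0\times L^2$ framework rigorous (skew--adjointness of $\A$, the eigenvalue asymptotics, the Riesz basis property), identify the correct adjoint of the boundary control operator so that the moment conditions above are genuinely necessary, and check the nonvanishing of every $\beta_k$, including a separate look at the low modes and the kernel of $\A$.
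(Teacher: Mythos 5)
Your proposal is correct and follows essentially the same route as the paper: spectral analysis of the operator on $H^1_0(0,1)\times L^2(0,1)$ with eigenvalues accumulating at a finite point, reduction of null controllability of a finite combination of eigenfunctions to a moment problem in which the control couples to each mode with a nonzero coefficient (the paper does this via duality with the adjoint system in Lemmas \ref{lem1}--\ref{lem2}), and then the Paley--Wiener argument that an entire function of exponential type vanishing on a set with a finite accumulation point must vanish identically. The only cosmetic difference is that you conclude $h\equiv 0$ and hence all coefficients $a_k=0$, whereas the paper phrases the same contradiction as the nonexistence of a biorthogonal family (Theorem \ref{notpossible}) applied to a single eigenfunction.
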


Nevertheless, in spite of the lack of exact controllability from the boundary, we will prove that system \eqref{linear1} is approximately controllable, i,e the set of reachable states 
\begin{align*}
R(T,(u_0,v_0))=\left\lbrace (u(T,x),v(T,x)): h \in H^1(0,T)\right\rbrace
\end{align*}
is dense in $L^2(0,1) \times L^2(0,1)$ for any $(u_0,v_0) \in H^{-1}(0,1) \times L^2(0,1)$ and $T>0.$  This property is equivalent to a unique continuation property for the adjoint system associated. In \cite{shang}, the author proved the UCP for the g-SRLW equation. As in the above problem, to obtain the results, we rely strongly on the carefully spectral analysis developed for the operator associated with the state equations. The main idea is to use the series expansion of the solution in terms of the eigenvectors of the operator in order to reduce the problem to a unique continuation problem (of the eigenvectors).

\begin{theorem}\label{approximated}
The System  \eqref{linear1} is approximately controllable in $H^1_0(0, 1) \times L^2(0,1)$ for any time $T > 0$.
\end{theorem}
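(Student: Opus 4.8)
The plan is to translate approximate controllability into a unique continuation property (UCP) for the adjoint system and then to exploit the explicit spectral description obtained in the preceding section. By linearity we may take $(u_0,v_0)=(0,0)$. A Hahn--Banach/duality argument in the spirit of \cite{rosier} shows that $R(T,(0,0))$ is dense in $L^2(0,1)\times L^2(0,1)$ if and only if the following holds: whenever $(\varphi,\psi)$ solves the backward adjoint system associated with \eqref{linear1}--\eqref{linear2} and the boundary functional dual to the control $h$ — which a routine multiplier/integration-by-parts computation identifies as a trace $q(t)$ of $(\varphi,\psi)$ at $x=1$, built from $\partial_x\varphi(1,t)$ and $\psi(1,t)$ — vanishes on $(0,T)$, then $(\varphi,\psi)\equiv0$. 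So the theorem reduces to proving this UCP, and here the series expansion of the adjoint solution in the eigenvectors of the state operator is the natural tool.

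Searching for $e^{\lambda t}(\varphi(x),\psi(x))$ solving \eqref{linear1}--\eqref{linear2} with $h\equiv0$ gives $\psi=\lambda^{-1}\varphi'$ and $\varphi''=\tfrac{\lambda^2}{1+\lambda^2}\varphi$ with $\varphi(0)=\varphi(1)=0$, the Neumann conditions on $v$ being then automatic; hence the eigenvalues are $\lambda_k^{\pm}=\pm i\omega_k$ with $\omega_k=\tfrac{k\pi}{\sqrt{1+k^2\pi^2}}$ ($k\ge1$), together with the kernel mode $(0,1)$, they are simple and pairwise distinct, and $\omega_k\nearrow1$. By the spectral analysis of the previous section the eigenvectors $e_k$ form a Riesz basis of the energy space, and the adjoint solution with data $\sum_k c_ke_k$ is $(\varphi(t),\psi(t))=\sum_k c_ke^{\lambda_kt}e_k$, whence $q(t)=\sum_k c_k\beta_ke^{\lambda_kt}$, where $\beta_k$ is the value of the observation functional on $e_k$. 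The crucial elementary fact is $\beta_k\neq0$ for every mode: for the oscillatory modes $\varphi_k(x)=\sin(k\pi x)$ gives $\partial_x\varphi_k(1)=(-1)^kk\pi\neq0$ and $\beta_k$ comes out to be a nonzero multiple of $k\pi$, while for $(0,1)$ one gets $\beta_0\neq0$ directly. Equivalently, an eigenfunction with $\beta_k=0$ would have $\varphi_k(1)=\varphi_k'(1)=0$ and hence vanish by Cauchy uniqueness for $\varphi_k''=\tfrac{\lambda_k^2}{1+\lambda_k^2}\varphi_k$ --- this is the UCP for the eigenvectors alluded to in the introduction.

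It then remains to show that $\sum_k c_k\beta_ke^{\lambda_kt}=0$ on $(0,T)$ forces $c_k=0$ for all $k$, and this is precisely where the accumulation of the $\omega_k$ at $1$ --- the feature responsible for the failure of spectral controllability in Theorem \ref{noncontrollable} --- has to be handled, since the exponential family $\{e^{\lambda_kt}\}$ is not uniformly separated and Ingham-type estimates are unavailable. Uniqueness nonetheless holds for every $T>0$. One route: the generator of \eqref{linear1}--\eqref{linear2} is bounded on the energy space (the evolution being equivalent to $\partial_tu=(1-\partial_x^2)^{-1}\partial_xv$ with Dirichlet conditions and $\partial_tv=\partial_xu$), so the adjoint trajectory is real-analytic in $t$; thus $q$ extends from $(0,T)$ to all of $\R$, and since the frequencies $\{\pm\omega_k\}\cup\{0\}$ are pairwise distinct, uniqueness of the Bohr--Fourier coefficients of the resulting almost periodic function yields $c_k\beta_k=0$, i.e. $c_k=0$. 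An alternative route factors $e^{it}$ out of the cluster, expands in a Taylor series, and reduces the identity to the vanishing of all moments $\sum_k(c_k\beta_k)\,\epsilon_k^{\,n}$ with $\epsilon_k=\omega_k-1\to0$, which by Weierstrass-type density of polynomials on the compact single-limit-point set $\{\epsilon_k\}$ again gives $c_k\beta_k=0$. Either way the UCP follows, so $(\varphi,\psi)\equiv0$, $R(T,(0,0))$ is dense, and the system is approximately controllable for every $T>0$ (note the absence of any minimal time, in contrast with exact controllability).

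The step I expect to be the main obstacle is making the previous paragraph fully rigorous together with the hidden-regularity (admissibility) needed so that the trace $q$ and its eigenfunction expansion are meaningful for adjoint data merely in the energy space: that trace is not classically defined there, and the series for $q$ need not converge absolutely because the $\omega_k$ accumulate, so one must combine a careful trace/admissibility estimate with the Riesz-basis and asymptotic-eigenvalue information established in the spectral section --- following, and adapting to the coupled SRLW system, the treatment of the linearized BBM equation by S. Micu in \cite{micu}; the unique continuation for the adjoint system may alternatively be quoted from \cite{shang}.
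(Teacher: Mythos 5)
Your proposal is correct and follows essentially the same route as the paper: reduce by duality to a unique continuation property for the adjoint system, expand the adjoint solution in the eigenfunctions of $A$, observe that the boundary observation coefficient of each mode is nonzero, and conclude that all coefficients vanish by exploiting analyticity in time together with the uniqueness of Bohr--Fourier coefficients (the paper implements exactly this via the mean $\lim_{S\to\infty}\frac{1}{S}\int_{-S}^{S}(\cdot)\,e^{-\lambda_m t}\,dt$). Your additional remarks on the nonvanishing of the modal traces, the kernel mode, and admissibility of the boundary trace are refinements the paper leaves implicit, but the core argument coincides.
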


Observe that exact controllability is essentially stronger notion than approximate controllability. In other words, exact controllability always implies approximate controllability. The converse statement is generally false

The paper is organized as follows. Then Section 2 is dedicated to study the well posedness and the moving exact controllability of the g-RSLW equation posed in a periodic domain, respectively, by using a Banach fixed  point Theorem, spectral analysis and the  moment method. Here, we impose smallness of the initial and terminal states. In section 3,  we consider the linearized SRLW system posed on a bounded domain with a boundary control. Firstly, by using the spectral properties of the differential operator associated to $\eqref{linear1}$ and the asymptotic behavior of its eigenvalues, we will able to guaranties the wellposedness in $ H^1(0, 1) \times  L^2(0, 1)$. Finally, the lack of exact controllability and the approximate controllability   are proved.

\section{Exact Controllability on a Periodic Domain.} \label{section2}

Before to present the above controllability results, we show the well-posedness of the systems \eqref{moving1} and \eqref{moving2}, respectively.

\subsection{\bf Well-posedness}
In this section we establish the well-posedness of the initial value
problem of the forcing generalized long wave model on the
periodic domain $\T$, 
\begin{equation}\label{ivp}
\left\{\begin{array}{rl}
\partial_{t}u-\partial_x^2\partial_tu+u^p\partial_xu-\partial_xv&=f,  \ \ \ x\in \T,\ \  t\in \R, \\
\\
\partial_{t}v-\partial_xu&=0,
\end{array}   \right.
\end{equation}
with the initial condition
\begin{equation}\label{ic}
u(x,0)=u_0(x),  \  \  v(x,0)=v_0(x),
\end{equation}
via the contraction principle approach.  Indeed, we see that the system (\ref{ivp}) can be rewritten as

\begin{equation}\label{fors}
\partial_tU=AU-G(U)+ F, \  \   \    U=(u,v)^t,
\end{equation}
where
\[
A=\begin{pmatrix}0 & \left(I-\partial_x^2\right)^{-1}\partial_{x}\\ \\
\partial_{x} & 0
       \end{pmatrix}, \ \
 G(U) =\begin{pmatrix} \left(I-\partial_x^2\right)^{-1}\partial_x\bigl(\frac1{p+1} u^{p+1}\bigr)\\ \\
                    0
 \end{pmatrix}, \  \   F=\begin{pmatrix} (I-\partial_x^2)^{-1}f\\ \\ 0 \end{pmatrix}.
\]

Now, if we consider the Sobolev type space $X^s=H^{s}(\T)\times
H^{s-1}(\T) $ with norm given by
$$
\|(u,v)\|^2_{X^s
}=\|u\|^2_{H^{s}}+\|v\|^2_{H^{s-1}},
$$
then  $A:X^s \rightarrow X^{s} $ is a
bounded linear operator. In fact, for
 $U=(u,v)\in X^s $ we see that
\begin{align*}
\|AU\|^2_{X^s}&=\|\partial_xu\|^2_{H^{s-1}}+\|(I-\partial_x^2)^{-1}\partial_xv\|^2_{H^{s}} \\
&\leq \|u\|^2_{H^{s}}+\|v\|^2_{H^{s-1}}\\
&=\|U\|^2_{X^{s}}.
\end{align*}
We also have the following estimates on $G$.
\begin{lemma}\label{pG}
 Suppose $s\geq0$ if $p=1$ and $s>\frac12$ if
$p>1$, then there are constants $C_2, C_3>0$ such that
\begin{enumerate}
\item $\|G(U)\|_{X^s}\leq C_2\|U\|^{p+1}_{X^s}.
$

\medskip

\item $\|G(U)-G(V)\|_{X^s}\leq C_3\left(\|U\|_{X^s}+\|V\|_{X^s}\right)^p\|U-V\|_{X^s}$.
\end{enumerate}
\end{lemma}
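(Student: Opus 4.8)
The plan is to reduce both inequalities to a Sobolev multiplication estimate, after first extracting the smoothing gained by the operator appearing in $G$. Since $G(U)$ has vanishing second component,
$$\|G(U)\|_{X^s}=\frac1{p+1}\left\|(I-\partial_x^2)^{-1}\partial_x\bigl(u^{p+1}\bigr)\right\|_{H^s(\T)}.$$
The Fourier multiplier of $(I-\partial_x^2)^{-1}\partial_x$ is $ik/(1+k^2)$, of modulus $\leq C(1+k^2)^{-1/2}$, so this operator is bounded from $H^{\sigma}(\T)$ into $H^{\sigma+1}(\T)$ for every $\sigma\in\R$; taking $\sigma=s-1$ gives $\|G(U)\|_{X^s}\leq C\|u^{p+1}\|_{H^{s-1}(\T)}$. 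Since $\|u\|_{H^s}\leq\|U\|_{X^s}$, everything reduces to estimating the $H^{s-1}$-norm of a power of $u$ by a power of $\|u\|_{H^s}$.

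For part (1) I would invoke the product law on the one-dimensional torus: if $s_1,s_2\geq s_0$, $s_1+s_2\geq0$ and $s_1+s_2-s_0>\tfrac12$, then multiplication is continuous from $H^{s_1}(\T)\times H^{s_2}(\T)$ into $H^{s_0}(\T)$. If $p=1$ and $s\geq0$, apply this with $s_1=s_2=s$ and $s_0=s-1$ (the condition $s+1>\tfrac12$ always holds) to get $\|u^2\|_{H^{s-1}}\leq C\|u\|_{H^s}^2$. If $p>1$ and $s>\tfrac12$, then $H^s(\T)$ is a Banach algebra, whence $\|u^{p+1}\|_{H^{s-1}}\leq\|u^{p+1}\|_{H^s}\leq C\|u\|_{H^s}^{p+1}$. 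In both cases $\|G(U)\|_{X^s}\leq C\|u\|_{H^s}^{p+1}\leq C_2\|U\|_{X^s}^{p+1}$.

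For part (2), writing $U=(u,v)$ and $V=(\tilde u,\tilde v)$, I would use the identity $u^{p+1}-\tilde u^{p+1}=(u-\tilde u)\sum_{j=0}^{p}u^{j}\tilde u^{p-j}$, so that
$$\|G(U)-G(V)\|_{X^s}\leq C\Bigl\|(u-\tilde u)\sum_{j=0}^{p}u^{j}\tilde u^{p-j}\Bigr\|_{H^{s-1}}.$$
Applying the same product law (target index $s-1$, factors in $H^s$) bounds the right-hand side by $C\|u-\tilde u\|_{H^s}\sum_{j=0}^{p}\|u^{j}\tilde u^{p-j}\|_{H^s}$; when $p>1$ the algebra property of $H^s(\T)$ gives $\|u^{j}\tilde u^{p-j}\|_{H^s}\leq\|u\|_{H^s}^{j}\|\tilde u\|_{H^s}^{p-j}$, while for $p=1$ the sum is just $\|u+\tilde u\|_{H^s}$. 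In either case the sum is $\leq C(\|u\|_{H^s}+\|\tilde u\|_{H^s})^{p}$, and hence $\|G(U)-G(V)\|_{X^s}\leq C_3(\|U\|_{X^s}+\|V\|_{X^s})^{p}\|U-V\|_{X^s}$.

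The only delicate point I anticipate is the low-regularity endpoint $p=1$, $s=0$, where $H^s=L^2(\T)$ is not an algebra: there one genuinely exploits the one-derivative gain of $(I-\partial_x^2)^{-1}\partial_x$, landing in $H^{-1}$, together with the one-dimensional embedding $H^1(\T)\hookrightarrow L^\infty(\T)$, which makes $\|u^2\|_{H^{-1}}=\sup_{\|\varphi\|_{H^1}\leq1}\int_\T u^2\varphi\,dx\leq C\|u\|_{L^2}^2$ hold. All remaining cases are a routine application of Sobolev multiplication, and one could equivalently route the bilinear term through the estimate of Roum\'egoux in \cite{R} cited above.
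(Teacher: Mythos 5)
Your argument is correct, and it differs from the paper's proof mainly in how the key bilinear bound is obtained. The paper's proof of this lemma is a one-line appeal to Roum\'egoux's estimate $\|(I-\partial_x^2)^{-1}\partial_x(uv)\|_{H^{s}(\T)}\leq C_s\|u\|_{H^{s}(\T)}\|v\|_{H^{s}(\T)}$ for $s\geq 0$ (from \cite{R}), which directly gives (1) for $p=1$ and, combined with the algebra property of $H^s(\T)$ for $s>\tfrac12$ and the factorization $u^{p+1}-\tilde u^{p+1}=(u-\tilde u)\sum_{j=0}^{p}u^{j}\tilde u^{p-j}$, gives the general case and the Lipschitz bound (2). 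You instead prove that bilinear input yourself: you separate the one-derivative gain of the multiplier $ik/(1+k^2)$, reduce to estimating $\|u^{p+1}\|_{H^{s-1}(\T)}$ via the Sobolev product law, and verify the only genuinely delicate endpoint ($p=1$, $s=0$, where $L^2$ is not an algebra) by the duality argument $\|u^2\|_{H^{-1}}\lesssim\|u\|_{L^2}^2$ through $H^1(\T)\hookrightarrow L^\infty(\T)$ — which is exactly the mechanism behind the cited estimate. So your route is self-contained and slightly more elementary, at the cost of invoking (and being careful with) the general multiplication theorem, while the paper buys brevity by black-boxing the estimate in the reference; the multilinear bookkeeping for $p>1$ and for the difference $G(U)-G(V)$ is the same in both. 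You also correctly note that only the first component of $G$ is nonzero and is measured in $H^s$, so the $v$-components never enter, matching the paper's setup.
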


\begin{proof}
The proof is achieved using the following result obtained by D.
Roumegoux  in \cite{R}: if $s\geq0$ then there exists a constant
$C_s>0$ such that for all $u,v\in H^s(\T)$,
$$
\|(I-\partial_x)^{-1}\partial_x(uv)\|_{H^{s}(\T)}\leq
C_s\|u\|_{H^{s}(\T)}\|v\|_{H^{s}(\T)}.
$$
\end{proof}

Now, in order to consider the initial value problem we need to describe the semigroup $S(t)$ associated with the linear problem
\begin{equation*}\label{lis}
U_t+AU=0.
\end{equation*}
A simple calculation shows that the unique solution of the previous linear problem  with the
initial condition
\begin{equation}\label{icee}
U(0, \cdot)=(u(0,\cdot), v(0,\cdot))=(u_0,v_0)=U_0 \in X^s,
\end{equation}
is given by
\begin{equation*}
U(t)=(u(t), v(t))=S(t)U_0,
\end{equation*}
where $S(t)$ is defined as
\begin{equation}\label{semig}
\widehat{S(t)(U)}(k)=\begin{pmatrix}
\cos\left(\rho(k)t\right) & \frac{i}{\sqrt{1+k^2}}\sin(\rho(k)t)\\ \\
i\sqrt{1+k^2}\sin\left(\rho(k)t\right) & \cos\left(\rho(k)t\right)
\end{pmatrix}\widehat{U}(k),
\end{equation}
and the function $\rho$ is given by
\begin{equation*}
\rho(k)=\sqrt{\frac{k^2}{1+k^2}}.
\end{equation*}
It is convenient to set
\begin{equation*}
Q(t)(\widehat u, \widehat v)=\bigl(Q_1(t), Q_2(t)\bigr)(\widehat u,
\widehat v),
\end{equation*}
where
\begin{align*}\label{Q1Q2}
\left[Q_1(t)(\widehat u, \widehat
v)\right](k)&=\cos\left(\rho(k)t\right)\widehat u +\frac{i}{\sqrt{1+k^2}}\sin(\rho(k)t) \widehat v\\
\left[Q_2(t)(\widehat u, \widehat v)\right](k)& = i\sqrt{1+k^2}\sin\left(\rho(k)t\right)\widehat u+\cos\left(\rho(k)t\right)\widehat v.\nonumber
\end{align*}
Then we have that
\begin{equation}\label{solutionlinear}
S(t)(U)=\left(\sum_{k \in\mathbb Z}\bigl[Q_1(t)(\widehat
U)\bigr]e^{ikx},
 \sum_{k \in\mathbb Z}\bigl[Q_2(t)(\widehat U)\bigr]e^{ikx}\right),
\end{equation}
and it is easy to prove the following result on $S(t)$.

\begin{lemma}\label{lebbl}
Suppose $s\in\R$. Then for all $t\in\R$, $S(t)$ is a bounded linear
operator from $X^s$ into $X^s$. Moreover, there exists $C_1>0$ such
that for all $t\in\R$,
\begin{equation*}
\|S(t)(U)\|_{X^s}\leq C_1\|U\|_{ X^s}.
\end{equation*}
\end{lemma}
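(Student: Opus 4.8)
The plan is to exploit the explicit diagonalization of $S(t)$ in the Fourier variable furnished by \eqref{semig}: for each frequency $k\in\Z$ the operator $S(t)$ acts on $\widehat U(k)=(\widehat u(k),\widehat v(k))^t$ by multiplication by the $2\times 2$ matrix
\[
M_k(t)=\begin{pmatrix}\cos(\rho(k)t) & \tfrac{i}{\sqrt{1+k^2}}\sin(\rho(k)t)\\[4pt] i\sqrt{1+k^2}\sin(\rho(k)t) & \cos(\rho(k)t)\end{pmatrix}.
\]
Linearity of $S(t)$ is then immediate, and it remains only to estimate the $X^s$-norm of $S(t)U$ frequency by frequency, recalling that in this section $X^s=H^s(\T)\times H^{s-1}(\T)$, so the first component carries the weight $(1+|k|^2)^s$ and the second carries $(1+|k|^2)^{s-1}$.

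First I would write
\[
\|S(t)U\|_{X^s}^2=\sum_{k\in\Z}(1+|k|^2)^s\bigl|[Q_1(t)\widehat U](k)\bigr|^2+\sum_{k\in\Z}(1+|k|^2)^{s-1}\bigl|[Q_2(t)\widehat U](k)\bigr|^2.
\]
Since $\rho(k)^2=k^2/(1+k^2)\in[0,1)$, one has $|\cos(\rho(k)t)|\le 1$ and $|\sin(\rho(k)t)|\le 1$ for all $t\in\R$ and all $k\in\Z$, uniformly. Hence, reading off the entries of $M_k(t)$,
\[
\bigl|[Q_1(t)\widehat U](k)\bigr|\le |\widehat u(k)|+\tfrac{1}{\sqrt{1+k^2}}|\widehat v(k)|,\qquad \bigl|[Q_2(t)\widehat U](k)\bigr|\le \sqrt{1+k^2}\,|\widehat u(k)|+|\widehat v(k)|,
\]
and therefore, using $(a+b)^2\le 2a^2+2b^2$,
\[
(1+|k|^2)^s\bigl|[Q_1(t)\widehat U](k)\bigr|^2\le 2(1+|k|^2)^s|\widehat u(k)|^2+2(1+|k|^2)^{s-1}|\widehat v(k)|^2,
\]
\[
(1+|k|^2)^{s-1}\bigl|[Q_2(t)\widehat U](k)\bigr|^2\le 2(1+|k|^2)^s|\widehat u(k)|^2+2(1+|k|^2)^{s-1}|\widehat v(k)|^2.
\]
The crucial point, and really the only thing to watch, is the bookkeeping of the Sobolev weights across the two components: the off-diagonal factors $\tfrac{1}{\sqrt{1+k^2}}$ and $\sqrt{1+k^2}$ in $M_k(t)$ are precisely what is needed to convert an $H^{s-1}$-weighted coefficient into an $H^s$-weighted one and conversely, so no loss of derivatives occurs.

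Summing the two inequalities over $k\in\Z$ and invoking the definitions of the $H^s(\T)$ and $H^{s-1}(\T)$ norms yields
\[
\|S(t)U\|_{X^s}^2\le 4\bigl(\|u\|_{H^s(\T)}^2+\|v\|_{H^{s-1}(\T)}^2\bigr)=4\,\|U\|_{X^s}^2,
\]
so the stated bound holds with $C_1=2$, independently of $t\in\R$ and $s\in\R$. I do not expect any genuine obstacle here: the estimate is elementary once the Fourier-side diagonalization is in hand, and the uniformity in $t$ comes for free from the boundedness of $\rho$. (If one prefers, the same computation shows $M_k(t)$ is, up to a fixed conjugation by $\operatorname{diag}(1,\sqrt{1+k^2})$, a rotation, hence of uniformly bounded operator norm between the weighted $\ell^2$ spaces; I would only spell this out if a cleaner constant were desired.)
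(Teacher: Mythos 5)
Your proof is correct and is exactly the argument the paper has in mind (the paper omits it, remarking only that the lemma ``is easy to prove''): one reads off from \eqref{semig} that $S(t)$ acts as a Fourier multiplier by the matrix $M_k(t)$, uses $|\cos(\rho(k)t)|,|\sin(\rho(k)t)|\le 1$, and checks that the off-diagonal weights $(1+k^2)^{\pm 1/2}$ exactly compensate the shift between the $H^s$ and $H^{s-1}$ weights, giving a bound uniform in $t$ and valid for every $s\in\R$. The only cosmetic remark is that the uniformity in $t$ comes from the bounds on sine and cosine themselves rather than from the boundedness of $\rho$, and that a Cauchy--Schwarz estimate in place of $(a+b)^2\le 2a^2+2b^2$ would even yield $C_1=\sqrt{2}$; neither point affects the validity of your argument.
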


Next, using the Banach Fixed Point Theorem, we establish the well-posedness of the initial value problem of the forcing regularized long wave model.

\begin{theorem}\label{locexis}
Let $T^*>0$ be given. Suppose $s\geq0$ if $p=1$ and $s>\frac12$ if
$p>1$, then for all $(u_0, v_0)\in X^s$ and $f\in L^1\left(0,T^*;
H^{s-2}(\T)\right)$ there exists a time $T>0$ which depends only on
$\|(u_0,v_0)\|_{X^s}$ and $\|f\|_{L^1\left(0,T^*;
\,H^{s-2}(\T)\right)}$ such that the problem (\ref{ivp})-(\ref{ic}) has
a unique solution $(u,v)\in C\left([0,T], X^s \right).$ Moreover,
for all $0<T_1<T$ there exists a neighborhood $\,\mathbb{V}$ of
$\,(u_0, v_0, f)$ in $X^s \times L^1\left(0,T^*; \,H^{s-2}(\T)\right)$
such that the correspondence $(\tilde u_0,\tilde v_0, \tilde
f)\longrightarrow(\tilde u(\cdot),\tilde v(\cdot))$, that associates
to $(\tilde u_0,\tilde v_0, \tilde f)$ the solution $(\tilde
u(\cdot), \tilde v(\cdot))$ of the  Cauchy problem (\ref{ivp}) with
initial condition $(\tilde u_0,\tilde v_0)$ and control $\tilde f$,
is a Lipschitz mapping from $\mathbb{V}$ in $C([0,T_1], X^s)$.
\end{theorem}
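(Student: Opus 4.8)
The plan is to recast \eqref{ivp}-\eqref{ic} as the integral (Duhamel) equation
\[
U(t) = S(t)U_0 - \int_0^t S(t-\tau)\,G\bigl(U(\tau)\bigr)\,d\tau + \int_0^t S(t-\tau)\,F(\tau)\,d\tau,
\]
and to solve it by the Banach Fixed Point Theorem in the complete metric space $E=\{U\in C([0,T],X^s): \|U\|_{C([0,T],X^s)}\le R\}$, where the radius $R$ and the (small) time $T\le T^*$ will be chosen below. Here $F=((I-\partial_x^2)^{-1}f,0)$, so $f\in L^1(0,T^*;H^{s-2}(\T))$ gives $F\in L^1(0,T^*;X^s)$ with the $X^s=H^s\times H^{s-1}$ convention used in this subsection. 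Denote by $\Gamma$ the map sending $U$ to the right-hand side above; the whole argument is to show $\Gamma:E\to E$ is a contraction.

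First I would establish the self-mapping property. Using Lemma \ref{lebbl}, $\|S(t)U_0\|_{X^s}\le C_1\|U_0\|_{X^s}$ and $\|\int_0^t S(t-\tau)F(\tau)d\tau\|_{X^s}\le C_1\|f\|_{L^1(0,T^*;H^{s-2})}$; for the nonlinear term, Lemma \ref{pG}(1) together with Lemma \ref{lebbl} yields
\[
\Bigl\|\int_0^t S(t-\tau)G(U(\tau))\,d\tau\Bigr\|_{X^s}\le C_1 C_2\, T\, \|U\|^{p+1}_{C([0,T],X^s)}\le C_1 C_2\, T\, R^{p+1}.
\]
Choosing $R=2C_1\bigl(\|U_0\|_{X^s}+\|f\|_{L^1(0,T^*;H^{s-2})}\bigr)$ and then $T$ small enough that $C_1 C_2 T R^{p+1}\le R/2$ forces $\Gamma(E)\subset E$. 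For the contraction estimate, given $U,V\in E$, only the nonlinear term contributes, and Lemma \ref{pG}(2) plus Lemma \ref{lebbl} give
\[
\|\Gamma(U)-\Gamma(V)\|_{C([0,T],X^s)}\le C_1 C_3\, T\,(2R)^p\,\|U-V\|_{C([0,T],X^s)};
\]
shrinking $T$ further so that $C_1 C_3 T (2R)^p\le\frac12$ makes $\Gamma$ a strict contraction. The Banach Fixed Point Theorem then yields a unique $U=(u,v)\in C([0,T],X^s)$, and by construction $T$ depends only on $\|(u_0,v_0)\|_{X^s}$ and $\|f\|_{L^1(0,T^*;H^{s-2})}$.

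For the Lipschitz dependence on the data, I would fix $0<T_1<T$, take a neighborhood $\V$ of $(u_0,v_0,f)$ small enough that the radius $R$ above can be chosen uniformly over $\V$ (possible since $R$ depends continuously, indeed linearly, on the data), and note that the same $T$ works for all triples in $\V$. Writing $\wt U$ for the solution with data $(\wt u_0,\wt v_0,\wt f)$, subtracting the two Duhamel formulas and applying Lemmas \ref{lebbl} and \ref{pG}(2) gives
\[
\|U-\wt U\|_{C([0,T_1],X^s)}\le C_1\|(u_0,v_0)-(\wt u_0,\wt v_0)\|_{X^s}+C_1\|f-\wt f\|_{L^1(0,T^*;H^{s-2})}+C_1 C_3 T_1 (2R)^p\|U-\wt U\|_{C([0,T_1],X^s)},
\]
and absorbing the last term on the left (using $C_1 C_3 T_1 (2R)^p\le\frac12$, which holds since $T_1<T$) produces the desired Lipschitz bound from $\V$ into $C([0,T_1],X^s)$. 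The main technical point — and the only place where the hypotheses $s\ge0$ for $p=1$, $s>\frac12$ for $p>1$ are actually used — is Lemma \ref{pG}, i.e. the bilinear/algebra estimate of Roumégoux controlling $(I-\partial_x^2)^{-1}\partial_x(u^{p+1})$ in $H^s(\T)$; everything else is the standard contraction-mapping bookkeeping sketched above. One should also check the mild consistency that $u^p\partial_x u=\frac{1}{p+1}\partial_x(u^{p+1})$ makes sense and lands in $H^{s-1}(\T)$ for $u\in H^s(\T)$ in the stated range, which is exactly what Lemma \ref{pG}(1) provides after applying $(I-\partial_x^2)^{-1}$.
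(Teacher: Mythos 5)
Your proposal follows essentially the same route as the paper: the same Duhamel reformulation, the same ball $B_R(T)$ with $R=2C_1\left(\|U_0\|_{X^s}+\|f\|_{L^1(0,T^*;H^{s-2})}\right)$, the same two smallness conditions on $T$, and Lemmas \ref{lebbl} and \ref{pG} playing exactly the roles they play in the paper's contraction argument, with the Lipschitz-dependence step differing only cosmetically (you absorb the nonlinear difference term using $C_1C_3T_1(2R)^p\le\frac12$, while the paper runs Gronwall's inequality and chooses an explicit neighborhood $\mathbb{V}$ so that the contraction conditions hold uniformly with time $T_1$). The one point worth tightening is uniqueness: the fixed point theorem only gives uniqueness within $B_R(T)$, so to get uniqueness in all of $C([0,T],X^s)$, as the statement asserts, you should add the Gronwall comparison of two arbitrary solutions (as the paper does) rather than relying on the contraction alone.
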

\begin{proof}
Remember that the initial value problem (\ref{ivp})-(\ref{ic}) can
be written as the equivalent first order evolution system
(\ref{fors})-(\ref{icee}).  It is known that the Duhamel's principle
implies that if $U$ is a solution of (\ref{fors}) with the initial
condition (\ref{icee}), then this solution satisfies the integral
equation
\begin{equation}\label{ie}
U(t)=S(t)U_0-\int_0^tS(t-\tau)G(U)(\tau)d\tau+\int_0^tS(t-\tau)F(\tau)d\tau.
\end{equation}
Now, given $T>0$  we define the Banach space
$Y^s(T)=C([0,T],X^s)$, equipped with the norm defined by
\begin{equation}
\|U\|_{Y^s(T)}=\max_{t\in[0,T]}\|U(\cdot,t)\|_{X^s}.
\end{equation}
Let $B_R(T)$ be
the closed ball of radius $R$ centered at the origin in $Y^s(T)$,
i.e.
$$
B_R(T)=\left\{U\in Y^s(T)\,\,:\,\,\|U\|_{Y^s(T)}\leq R\right\}.
$$
For fixed $U_0=(u_0, v_0)\in X^s$ and $f\in L^1\left([0,T^*], \,H^{s-2}(S)\right)$,  we define the map
\begin{equation}
\Psi(U(t))=S(t)U_0-\int_0^tS(t-\tau)G(U(\tau))d\tau+\int_0^tS(t-\tau)F(\tau)d\tau,
\end{equation}
where $U=(u, v)\in Y^s(T)$. We will show that the correspondence
$\,U(t)\mapsto\Psi(U(t))\,$ maps $B_R(T)$ into itself and is a
contraction if $R$ and $T$ are well chosen. In fact, if $t\in[0,T]$
and $U\in B_R(T)$, then using Lemma \ref{lebbl} and statement
(\textit{1}) of Lemma \ref{pG} we have that
\begin{align*}
\|\Psi(U(t))\|_{X^s}&\leq
C_1\left(\|U_0\|_{X^s}+\|f\|_{L^1\left(0,T^*; \,H^{s-2}\right)}+C_2\int_0^t\|U(\tau)\|_{X^s}^{p+1}\,d\tau\right)\\
&\leq C_1\left(\|U_0\|_{X^s}+\|f\|_{L^1\left(0,T^*; \,H^{s-2}\right)}+C_2R^{p+1}T\right).
\end{align*}
Choosing
$R=2C_1\left(\|U_0\|_{X^s}+\|f\|_{L^1\left(0,T^*;H^{s-2}\right)}\right)$
and $0<T\leq T^*$ such that
\begin{equation}\label{t1}
2C_1C_2R^{p}\,T\leq1,
\end{equation}
we obtain that
\begin{align*}
\|\Psi(U(t))\|_{X^s}&\leq
C_1\,\left(\|U_0\|_{X^s}+\|f\|_{L^1\left(0,T^*; \,H^{s-2}\right)}\right)(1+2C_1C_2R^p\,T)=
R.
\end{align*}
So that $\Psi$ maps $B_R(T)$ to itself. Let us prove that $\Psi$ is
a contraction. If $U,V\in B_R(T)$, then by the definition of $\Psi$
we have that
\begin{equation*}
\Psi(U(t))-\Psi(V(t))=-\int_0^tS(t-\tau)\bigl[G(U(\tau))-G(V(\tau))\bigr]d\tau.
\end{equation*}
Then using the statement (\textit{2}) of Lemma \ref{pG} we see
that for $t\in[0,T]$,
\begin{align*}
\left\|\Psi(U(t))-\Psi(V(t))\right\|_{X^s}&\leq
C_1C_3\int_0^t\left(\|U(\tau)\|_{X^s}
+\|V(\tau)\|_{X^s}\right)^p\|U(\tau)-V(\tau)\|_{X^s}\,d\tau\\
&\leq C_1C_3(2R)^p\,T\|U-V\|_{Y^s(T)}.
\end{align*}
We choose $T$  enough small so that (\ref{t1}) holds and
\begin{equation}\label{t2}
C_1C_3(2R)^p\,T \leq\frac12.
\end{equation}

So, we conclude that
\begin{align*}
\left\|\Psi(U)-\Psi(V)\right\|_{Y^s(T)}&\leq \frac12\|U-V\|_{Y^s(T)}.
\end{align*}
Therefore $\Psi$ is a contraction. Thus, there exists a unique fixed
point of $\Psi$ in $B_R(T)$, which is a solution of the integral
equation (\ref{ie}).  Now, if $(u(t),v(t))\in C([0,T], X^s)$ is a
solution of the integral equation (\ref{ie}), obviously
$(u(0), v(0))=(u_0, v_0)$. Moreover, differentiating the
equation (\ref{ie}) with respect to $t$, there appears the relation
(\ref{fors}). Then $(u(t), v(t))$ is a solution of the
considered  problem.

The uniqueness of the solution and the local Lipschitz continuity
are consequence of the following standard argument. Let
$U(t)=(u(t), v(t)), \, V(t)=(\tilde u(t),\tilde v(t))$ be
two solutions of the equation (\ref{fors}) with condition
$U_0=(u_0,v_0, f)$ and $V_0=(\tilde u_0,\tilde v_0, \tilde f)$
respectively, satisfying
$$
U, V\in C([0,T], X^s).
$$
Then, if $t\in[0,T]$ we see that
\begin{align*}
\|U(t)-V(t)\|_{X^s}&\leq C_1\left(\|(U_0-V_0)\|_{X^s}+\|f-\tilde f\|_{L^1\left(0,T^*; \,H^{s-2}\right)}\right)\\
&+ C_1C_3\int_0^t\left(\|U(\tau)\|_{X^s}
+\|V(\tau)\|_{X^s}\right)^p\|U(\tau)-V(\tau)\|_{X^s}\,d\tau\\
&\leq
C_1\left(\|U_0-V_0\|_{X^s}+\|f-\tilde f\|_{L^1\left(0,T^*; \,H^{s-2}\right)}\right)\\
&+(2N)^pC_1C_3\int_0^t\|U(\tau)-V(\tau)\|_{X^s}\,d\tau,
\end{align*}
where we assume that
$$
\|U\|_{X^s(T)}, \|V\|_{X^s(T)}\leq N.
$$
Then, taking $C_4=2^p\,C_1C_3$ and using Gronwall's inequality, we have
for all $t\in[0,T]$ that
\begin{equation}\label{lc}
\|U(t)-V(t)\|_{X^s}\leq C_1e^{C_4N^p\,T}\left(\|U_0-V_0\|_{X^s}+\|f-\tilde f\|_{L^1\left(0,T^*; \,H^{s-2}\right)}\right).
\end{equation}
Now, if $U_0=V_0$ and $f=\tilde f$ then we conclude that $\|U-V\|_{Y^s(T)}=0$, which
yields the uniqueness.

On the other hand,  from the discussion of the existence of
solutions for the problem (\ref{fors})-(\ref{icee}),  for
$(U_0,f)\in X^s\times L^1\left(0,T^*; \,H^{s-2}(\T)\right)$ we have  that the existence time $T$ of
the associated solution is characterized by the inequalities
(\ref{t1})-(\ref{t2}). Then, if $\,0<T_1<T\,$ we define
$$
\mathbb{V}=\left\{ (V_0, \tilde f) \in X^s\times  L^1\left(0,T^*; \,H^{s-2}(\T)\right)    \     :     \
\|V_0-U_0\|_{X^s}+\|\tilde f- f\|_{L^1\left(0,T^*; \,H^{s-2}\right)}<\epsilon\right\}
$$
with $\epsilon=\left(\frac{1}{T_1^{1/p}}-\frac1{T^{1/p}}\right)\epsilon'$ and
$0<\epsilon'<\min\left\{\frac{1}{2C_1\left(2C_1C_2\right)^{1/p}},\,\frac{1}{4C_1\left(2C_1C_3\right)^{1/p}}\right\}.$ Note that if $(V_0, \tilde f)\in\mathbb{V}$ then we see that
\begin{equation}\label{v0}
\|V_0\|_{X^s}+\|\tilde f\|_{L^1\left(0,T^*; \,H^{s-2}\right)}\leq\epsilon+\|U_0\|_{X^s}+\| f\|_{L^1\left(0,T^*; \,H^{s-2}\right)}.
\end{equation}
Hence we have that
\begin{equation}\label{tprime}
2C_1C_2(R')^p\,T_1\leq1,\,\,\,\,2C_1C_3(2R')^p\,T_1\leq1,
\end{equation}
where $R'=2C_1\left(\|V_0\|_{X^s}+\|\tilde f\|_{L^1\left(0,T^*;
\,H^{s-2}\right)}\right)$. In fact,
\begin{align*}
2C_1C_2(R')^p&\leq 2C_1C_2\left( 2C_1\epsilon+R\right)^p\\
&\leq\biggl(2C_1\left(2C_1C_2\right)^{1/p}\biggl(\frac{1}{T_1^{1/p}}-\frac1{T^{1/p}}\biggr)\epsilon'+\left(2C_1C_2\right)^{1/p}R\biggr)^p<\frac{1}{T_1}.
\end{align*}
In a similar way we have the other inequality. Then from
(\ref{tprime}) we conclude that for any
$(V_0, \tilde f)=(\tilde u_0,\tilde v_0, \tilde f)\in\mathbb V$ there exists a unique
solution $V(t)=(\tilde u(t),\tilde v(t))\in C([0,T_1],X^s)$ of
the problem (\ref{fors}) with initial condition $V_0$ and control $\tilde f$; moreover,
$V\in B_{R'}(T_1)$ with $R'=2C_1\left(\|V_0\|_{X^s}+\|\tilde f\|_{L^1\left([0,T^*],
\,H^{s-2}\right)}\right)$. Then using (\ref{v0}),
for $(V_0, \tilde f), (W_0, \tilde g)\in\mathbb V$ with associated solutions $V$ and $W$
respectively, we have that
\begin{align*}
\|V\|_{Y^s(T)}+\|W\|_{Y^s(T)}&\leq2C_1\left(\|V_0\|_{X^s}+\|\tilde f\|_{L^1\left(0,T^*;
\,H^{s-2}\right)}+\|W_0\|_{X^s}+\|\tilde g\|_{L^1\left(0,T^*;
\,H^{s-2}\right)}\right)\\
&\leq 4C_1\left(\epsilon+\|U_0\|_{X^s}+\| f\|_{L^1\left(0,T^*;\,H^{s-2}\right)}\right)\\
&\leq\frac{2}{\left(2C_1C_2\right)^{1/p}}\biggl(\frac{1}{T_1^{1/p}}-\frac1{T^{1/p}}\biggr)+4C_1\left(\|U_0\|_{X^s}+\| f\|_{L^1\left(0,T^*;
\,H^{s-2}\right)}\right)\\
&\leq\frac{2}{\left(2C_1C_2T_1\right)^{1/p}}.
\end{align*}
Then using $(\ref{lc})$ with $N=2\left(2C_1C_2T_1\right)^{-1/p}$ we obtain that for
all $t\in[0,T_1]$,
\begin{align*}
\|V(t)-W(t)\|_{Y^s}&\leq C_1e^{C_4NT_1}\|V_0-W_0\|_{Y^s} \leq
C\|V_0-W_0\|_{Y^s}
\end{align*}
where $C=C_1e^{2^{2p}C_3(2C_2)^{-1}},$ and the proof is complete.
\end{proof}

Now, with the Theorem \ref{locexis} in hands, we turn to establish the wellposednes of the linear system \eqref{moving2} via transposition, i.e, the equation \eqref{moving2}  has to be understood in a weak sense. For instance, the respective solution can be defined by transpositions (see \cite{lions_1}, \cite{lions_2}). Let us briefly recall how can this be done.

Consider $f \in L^1(0, T, H^{-1}(\T))$,  $g \in L^1(0, T, L^2(\T))$and $(\varphi,\psi)$ the solution of the adjoint equation
\begin{equation}\label{adjointnew}
\begin{cases}
\partial_{t} \varphi-\partial_x^2\partial_t \varphi-\partial_x \psi=f,  & \ \ \ x\in \T,\ \  t\in \R, \\
\partial_{t}\psi-\partial_x \varphi=g, & \ \ \ x\in \T,\ \  t\in \R, \\
\varphi(x,T)=0, \,\, \psi(x,T)=0, & \ \ \ x\in \T.
\end{cases}
\end{equation}
The wellposedness of system \eqref{adjointnew} is guaranties by using a similar argument to the Theorem \ref{locexis}. Thus, there exists a unique solution $(\varphi,\psi)$ belongs to $C([0,T];X^{1})$. To establish the solution via transposition, we  multiply  \eqref{moving2}  by $(\varphi,\psi)$ and integrating by parts (formally)
\begin{multline}
\int_0^T\int_{\T} u f dxdt + \int_0^T\int_{\T} v g dxdt     -  \int_{\T} u_0(x)\varphi(x,0)   -\int_{\T} \partial_x u_0(x) \partial_x \varphi(x,0)dx  \\ -  \int_{\T}  v_0(x) \psi(x,0)dx =   \int_0^T \int_{\T} g(t)\varphi(x,t) \delta(x+ct)dxdt.
\end{multline}
Therefore, we can say that $(u,v)$ is the solution of \eqref{moving2}   if and only if
\begin{multline}\label{transposition}
\int_0^T ( u(t), f(t))_{L^2(\T)}dt + \int_0^T   (v(t), g(t) )_{L^2(\T)}dt     \\
-  \left\langle u_0, \varphi(0) \right\rangle_{H^{-1}(\T), H^1(\T)}     -  (  v_0, \psi(0) )_{L^2(\T)}  =   \int_0^T  g(t)\varphi(-ct,t)dt
\end{multline}
for all $f \in L^1(0, T;H^{-1}(\T))$, $g \in L^1(0, T;L^2(\T))$ and $(\varphi, \psi)$ the solution of \eqref{adjointnew}.  As in \cite{lions_1}, \cite{lions_2} it can be proved that for any initial data $(u_0,v_0) \in H^{-1}(\T) \times L^2(\T)$, the identity \eqref{transposition} has a unique solution
$(u.v) \in C([0, T];H^{-1}(\T) \times L^2(\T))$.


\subsection{\bf Spectral Analysis}

We analyze the linear operator related to the g-SRLW system and study its
spectral structure, eigenvalues and eigenfunctions, which will play an important role
in the characterization of the solutions. Then, in this section we perform the spectral analysis for the operator
\[
A=\begin{pmatrix}0 & (I-\partial_x^2)^{-1}\partial_{x}\\ \\
\partial_{x} & 0
\end{pmatrix},
\]
defined in the space $X^s$. The result in this analysis will be the 
basis to transfer the exact controllability of the associated linear
system  to the nonlinear system. Let us define
\[
E_{1, k}= \begin{pmatrix}e^{ikx} \\  0
\end{pmatrix}, \ \ \ E_{2, k}= \begin{pmatrix}0 \\ ke^{ikx}
\end{pmatrix},
\]
for $ k\in \Z^*=\Z\setminus\{0\}$. If we set
\[
\Sigma_k=\begin{pmatrix}0 & \frac{i k^2}{1+k^2} \\
i   & 0
\end{pmatrix},  \  \    k\in \Z^*,
\]
then we see directly that
\[
A(E_{1, k}, E_{2, k} )= (E_{1, k}, E_{2, k} )\Sigma_k,  \  \    k\in \Z^*.
\]

Moreover, we have that the eigenvalues for $\Sigma_k$ are
\[
\lambda_{ k}^+= i\, \sqrt{\frac{k^2}{1+k^2}}, \ \ \ \lambda_{k}^-=
-i\, \sqrt{\frac{k^2}{1+k^2}},  \  \    k\in \Z^*,
\]
with corresponding eigenvector
\[
 \tilde e_{k}^+= \begin{pmatrix}1 \\ \frac{(1+k^2)\lambda_k^+}{ik^2}
\end{pmatrix} , \ \ \ \tilde e_{ k}^-= \begin{pmatrix}1 \\   \frac{(1+k^2)\lambda_{k}^-}{ik^2}
\end{pmatrix}, \  \   k\in \Z^{*}.
\]
Thus, we have that
\begin{align*}
A(E_{1, k}, E_{2, k} )(\tilde e_{k}^+, \tilde e_{k}^- )& = (E_{1, k}, E_{2, k} )\Sigma_k(\tilde e_{k}^+, \tilde e_{k}^- )\\
&= (\lambda_{1, k}(E_{1, k}, E_{2, k} )\tilde e_{ k}^+, \lambda_{2,
k}(E_{1, k}, E_{2, k} )\tilde e_{ k}^-), \  \  k\in \Z^*,
\end{align*}
meaning that $\lambda_{k}^+$ and $\lambda_{k}^-$ are the eigenvalues for the operator $A$ with corresponding eigenvectors
\[
\eta_{k}^{\pm}= (E_{1, k}, E_{2, k} )\tilde e_{ k}^{\pm}, \ \, \ \
k\in \Z,
\]
where
\[
\lambda_{0}^+=\lambda_{0}^-=0,   \  \  \tilde e_{ 0}^+=\begin{pmatrix}1 \\
0
\end{pmatrix}, \  \   \tilde e_{ 0}^-= \begin{pmatrix}0 \\ 1
\end{pmatrix} , \ \ \
E_{1, 0}= \begin{pmatrix}1 \\  0
\end{pmatrix} , \ \ \ E_{2,0 }= \begin{pmatrix}0 \\  1
\end{pmatrix}.
\]
 On the other hand, we see that
\[
\lim_{k\to  \infty} (\tilde e_{k}^+, \tilde e_{k}^-)= \begin{pmatrix} 1 & 1\\  1 & -1\end{pmatrix}
\]
and also that
\[
\lim_{k\to \pm \infty} \det  (\tilde e_{k}^+, \tilde e_{k}^-)= - 2 \ne 0.
\]
In other words,  $\{\nu_{0}^+, \nu_{0}^-, \nu_{k}^+, \nu_{k}^- \
:  \ k\in \Z^* \}$ forms a Riesz basis for $X_s$ with
\[
\nu_{k}^{\pm}= \frac{\eta_{k}^{\pm}}{\|\eta_{k}^{\pm}\|_{X_s}},
\]
Moreover, we also have that $\nu^{(1),\pm}_{k} = b_{k}^{\pm} e^{ikx}$ with
\begin{equation}\label{bk}
0<C_1\leq |b_{k}^{\pm} |\leq C_2, \ \ k\in \Z.
\end{equation}

From the above discussion we have the following result.
\begin{theorem}
Let $\lambda_k^{\pm}={\pm} i \mu_k $ and $\phi_{ k}^{\pm} $ be given by
\[
\mu_k=  \sqrt{\frac{k^2}{1+k^2}}= \frac{|k|}{\sqrt{1+k^2}}, \ \ \  k\in \Z,
\]
\[
\phi_{k}^{+} = \left\{
\begin{array}{rl} \nu_{k}^{+},   & k=0, 1, 2, 3,\ldots\\
\nu_{ k}^{-},        & k=-1, -2, -3,\ldots,
\end{array} \right. \ \ \ \ \phi_{k}^- = \left\{
\begin{array}{rl} \nu_{-k}^+,   & k= 1, 2, 3,\ldots\\
\nu_{ -k}^-,        & k= 0, -1, -2, -3,\ldots,
\end{array} \right.
\]
then we have that

\medskip

\nd a) The spectrum of the operator A is $\sigma(A)=\{\lambda_k^{\pm} \  :
\ k\in \Z\}$, in which each $\lambda_k^{\pm} $ is a double eigenvalue with
eigenvectors $\phi_{k}^+$ and $\phi_{k}^-$ for $k\in \Z$.

\medskip

\nd b) The set $\{ \phi_{k}^{\pm},  \  :  \  k\in \Z \}$
forms an orthonormal basis for the space $X^s$. Thus, any $U\in
X^s$ has the following Fourier expansion

\smallskip

\[
U= \sum_{k\in \Z} \left(\alpha_{k }^+ \phi_{k}^+ + \alpha_{k }^-
\phi_{k}^-  \right),  \  \  \alpha_{k}^{\pm}=\left<U, \phi_{k}^{\pm}\right>_{X^s}, \  \ k\in \Z.
\]
\end{theorem}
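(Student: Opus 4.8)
The plan is to use that, in the Fourier variable, $A$ is block diagonal with $2\times 2$ blocks, and to diagonalize each block. Set $V_k=\spn\{E_{1,k},E_{2,k}\}$ for $k\in\Z^*$ and $V_0=\spn\{E_{1,0},E_{2,0}\}$ (the constants). The family $\{e^{ikx}\}_{k\in\Z}$ is an orthogonal basis of $H^s(\T)$ and of $H^{s-1}(\T)$, and the vectors $E_{1,k}$, $E_{2,k}$ live in complementary slots of $X^s=H^s(\T)\times H^{s-1}(\T)$ and in distinct frequencies, so $\{E_{1,k},E_{2,k}:k\in\Z\}$ is an orthogonal basis of $X^s$; hence $X^s$ is the Hilbert-space orthogonal direct sum of the mutually orthogonal subspaces $V_k$. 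By the identity $A(E_{1,k},E_{2,k})=(E_{1,k},E_{2,k})\Sigma_k$ established above (with $\Sigma_0=0$), each $V_k$ is $A$-invariant, so it suffices to analyze $A|_{V_k}\cong\Sigma_k$.

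For $k\neq0$ the matrix $\Sigma_k$ has the two distinct eigenvalues $\lambda_k^\pm=\pm i\mu_k$ (distinct because $\mu_k>0$) with linearly independent eigenvectors $\tilde e_k^\pm$, so $\eta_k^\pm=(E_{1,k},E_{2,k})\tilde e_k^\pm$ is a basis of $V_k$ made of eigenvectors of $A$; for $k=0$, $\Sigma_0=0$ and $E_{1,0},E_{2,0}$ are eigenvectors for the eigenvalue $0$. To upgrade this to an orthonormal system I would compute directly in $X^s$: writing out $\tilde e_k^\pm$ one gets $\eta_k^\pm=\bigl(e^{ikx},\,\pm\sgn(k)\sqrt{1+k^2}\,e^{ikx}\bigr)^t$, whence $\|\eta_k^\pm\|_{X^s}^2=(1+k^2)^s+(1+k^2)(1+k^2)^{s-1}=2(1+k^2)^s$ and $\langle \eta_k^+,\eta_k^-\rangle_{X^s}=(1+k^2)^s-(1+k^2)(1+k^2)^{s-1}=0$, the $H^s$ and $H^{s-1}$ weights cancelling exactly. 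So $\nu_k^\pm=\eta_k^\pm/\|\eta_k^\pm\|_{X^s}$ is an orthonormal pair in $V_k$; since the $V_k$ are mutually orthogonal and span $X^s$, the whole family $\{\nu_k^\pm:k\in\Z\}$, relabeled $\{\phi_k^\pm\}$ as in the statement, is an orthonormal basis of $X^s$, and the claimed expansion $U=\sum_k(\alpha_k^+\phi_k^++\alpha_k^-\phi_k^-)$ with $\alpha_k^\pm=\langle U,\phi_k^\pm\rangle_{X^s}$ is then the abstract orthonormal expansion. This proves (b).

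For (a), the block structure shows that if $AU=\lambda U$ with $U\neq0$, then comparing coefficients in the basis $\{\nu_k^\pm\}$ forces $\lambda\in\{\lambda_k^\pm\}$, so the eigenvalues of $A$ are exactly $\{\lambda_k^\pm:k\in\Z\}$. Since $\mu_k=|k|/\sqrt{1+k^2}$ is strictly increasing in $|k|$, a nonzero value $i\mu_m$ arises only from the modes $k=\pm m$, so its eigenspace is exactly two-dimensional; likewise $\lambda=0$ arises only for $k=0$ and its eigenspace is $V_0$, again two-dimensional. Hence each $\lambda_k^\pm$ is a double eigenvalue, its eigenspace being spanned by the two eigenvectors collected as $\phi_k^+,\phi_k^-$ under the relabeling. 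I do not expect any real computational obstacle; the point to phrase carefully is that, because $\mu_k\to1$, the numbers $\pm i$ are accumulation points of the set of eigenvalues and hence lie in $\sigma(A)$ (which is closed, $A$ being bounded on $X^s$) although they are not eigenvalues — so the equality $\sigma(A)=\{\lambda_k^\pm\}$ in the statement must be read as a description of the point spectrum, the remaining spectrum consisting of the two limit points $\pm i$.
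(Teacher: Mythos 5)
Your proposal is correct, and it follows the same skeleton as the paper: the Fourier block decomposition $A(E_{1,k},E_{2,k})=(E_{1,k},E_{2,k})\Sigma_k$, the diagonalization of each $2\times 2$ block $\Sigma_k$ with eigenvalues $\lambda_k^{\pm}=\pm i\mu_k$, and the relabeling of the normalized eigenvectors $\nu_k^{\pm}$ into $\phi_k^{\pm}$. Where you genuinely diverge is in how the basis claim of part (b) is justified. The paper only observes that $\det(\tilde e_k^{+},\tilde e_k^{-})\to -2\neq 0$ and concludes that $\{\nu_k^{\pm}\}$ is a Riesz basis, and then states the theorem with the word ``orthonormal'' without further comment; you instead compute the $X^s$ inner products explicitly, $\|\eta_k^{\pm}\|_{X^s}^2=2(1+k^2)^s$ and $\langle\eta_k^{+},\eta_k^{-}\rangle_{X^s}=(1+k^2)^s-(1+k^2)(1+k^2)^{s-1}=0$, so you actually verify the orthonormality that the theorem asserts. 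This is sharper than the paper's argument, with the caveat that the exact cancellation is special to the convention $X^s=H^s(\T)\times H^{s-1}(\T)$ used in the well-posedness subsection (with the $H^s\times H^{s-2}$ convention of the introduction one would only recover the Riesz-basis conclusion, which is precisely what the determinant argument buys). Your treatment of part (a) is also more careful than the paper's: expanding an eigenvector in the basis to see that every eigenvalue is some $\lambda_k^{\pm}$, using the strict monotonicity of $\mu_k$ in $|k|$ to get exactly two-dimensional eigenspaces, and noting that, since $\mu_k\to 1$ and $\sigma(A)$ is closed, the points $\pm i$ belong to the spectrum without being eigenvalues, so the stated equality $\sigma(A)=\{\lambda_k^{\pm}\}$ must be read as a description of the point spectrum — a correction the paper does not make explicit.
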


\subsection{\bf Moving distributed controllability of the Linear System }
In this subsection, we consider the exact controllability problem for the linear system associated to \eqref{moving1}:
\begin{equation}\label{lp}
\begin{cases}
\partial_{t}u-\partial_x^2\partial_tu-\partial_xv=b(x+ct)h(x,t),  & \ \ \ x\in \T,\ \  t\in [0,T], \\
\partial_{t}v-\partial_xu=0, & \ \ \ x\in \T,\ \  t\in  [0,T], \\
u(x,0)=u_0(x), \,\, v(x,0)=v_0(x), & \ \ \ x\in \T.
\end{cases}
\end{equation}

Let us first deduce a necessary and sufficient condition for the exact controllability 
property of \eqref{lp} holds. By $\left\langle \cdot, \cdot \right\rangle$, we denote the duality product between $X^s$ and its dual $X^{-s}$. 
For $(\varphi_T, \psi_T) \in X^{-s}$, consider the following backward homogeneous equation
\begin{equation}\label{lp2'}
\begin{cases}
\partial_{t} \varphi-\partial_x^2\partial_t \varphi-\partial_x \psi=0,  & \ \ \ x\in \T,\ \  t\in \R, \\
\partial_{t}\psi-\partial_x \varphi=0, & \ \ \ x\in \T,\ \  t\in \R, \\
\varphi(x,T)=\varphi_T(x), \,\, \psi(x,T)=\psi_T(x), & \ \ \ x\in \T.
\end{cases}
\end{equation}
Let $(\varphi,\psi) \in  C([0, T],X^{-s})$ be the unique weak solution of \eqref{lp2'}.
\begin{lemma}\label{lemmacontrol}
Let $b=b(x) \in C^{\infty}(\T)$, be such that
\begin{equation}
\omega =\left\lbrace x \in \T: b(x)\neq 0 \right\rbrace \neq \emptyset.
\end{equation}
Then, the control $h(x,t) \in L^2\left(0,T; H^{s-2}(\T)\right) $ drives the inital data $(u_0,v_0) \in X^s$ of the system \eqref{lp} to $(u_T,v_T) \in X^s$ in time $T$ if and only if 
\begin{multline}\label{controlcondition}
  \int_0^T \int_{\T} h(x,t)b(x+ct)\varphi(x,t)dxdt=\left\langle u_T,\varphi_T-\partial_x^2 \varphi_T \right\rangle +  \left\langle v_T, \psi_T \right\rangle  \\
  - \left\langle u_0,\varphi(0)   +  \partial_x^2 \varphi(0) \right\rangle -  \left\langle v_0, \psi(0) \right\rangle
\end{multline}
for all $(\varphi_T,\psi_T) \in X^{-s}$, where $(\varphi, \psi) $ is the corresponding solution of \eqref{lp2'}.
\end{lemma}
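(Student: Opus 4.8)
The plan is to follow the classical duality (transposition) argument: multiply the state equation \eqref{lp} by the solution $(\varphi,\psi)$ of the adjoint (backward) system \eqref{lp2'}, integrate over $\T\times(0,T)$, and carefully integrate by parts to collect all boundary-in-time and spatial terms. Since the system is periodic in $x$, no spatial boundary terms appear; the only contributions come from the time endpoints $t=0$ and $t=T$ and from the right-hand side $b(x+ct)h(x,t)$. First I would fix $(\varphi_T,\psi_T)\in X^{-s}$, let $(\varphi,\psi)\in C([0,T],X^{-s})$ be the associated weak solution of \eqref{lp2'}, and note that the operator appearing in the first equation is $u\mapsto \partial_t u-\partial_x^2\partial_t u=(I-\partial_x^2)\partial_t u$, so that the natural pairing of the first equation of \eqref{lp} with $\varphi$ produces $\langle (I-\partial_x^2)\partial_t u,\varphi\rangle=\langle \partial_t u,\varphi-\partial_x^2\varphi\rangle$, which is the source of the weights $\varphi_T-\partial_x^2\varphi_T$ and $\varphi(0)+\partial_x^2\varphi(0)$ in \eqref{controlcondition} after transferring the $t$-derivative.

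The key computational step is the following. Pairing the first equation of \eqref{lp} against $\varphi$ and the second against $\psi$, adding, and integrating in time and space, I would move $\partial_t$ off $u$ and $v$ via $\int_0^T\langle\partial_t u,\varphi-\partial_x^2\varphi\rangle\,dt=[\langle u,\varphi-\partial_x^2\varphi\rangle]_0^T-\int_0^T\langle u,\partial_t(\varphi-\partial_x^2\varphi)\rangle\,dt$ and similarly for $v$, and move $\partial_x$ between $u$, $v$, $\varphi$, $\psi$ using periodicity (integration by parts in $x$ has no boundary term on $\T$). The cross terms $-\partial_x v$ paired with $\varphi$ and $-\partial_x\varphi$ paired with $\psi$ are arranged so that the interior integrals exactly reproduce the adjoint equations \eqref{lp2'}: since $(\varphi,\psi)$ solves $\partial_t\varphi-\partial_x^2\partial_t\varphi-\partial_x\psi=0$ and $\partial_t\psi-\partial_x\varphi=0$, all interior integrals cancel, leaving only the time-endpoint terms and the control term $\int_0^T\int_\T b(x+ct)h(x,t)\varphi(x,t)\,dx\,dt$. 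Evaluating the endpoint terms using $\varphi(T)=\varphi_T$, $\psi(T)=\psi_T$ and $u(T)=u_T$, $v(T)=v_T$ (when the steering condition holds), $u(0)=u_0$, $v(0)=v_0$, yields precisely identity \eqref{controlcondition}. For the converse, one observes that \eqref{controlcondition} holding for all $(\varphi_T,\psi_T)\in X^{-s}$, combined with the same identity run for the \emph{actual} terminal state $(u(T),v(T))$ in place of $(u_T,v_T)$, forces $\langle u(T)-u_T,\varphi_T-\partial_x^2\varphi_T\rangle+\langle v(T)-v_T,\psi_T\rangle=0$ for all $(\varphi_T,\psi_T)$; since $(I-\partial_x^2)$ is an isomorphism of periodic Sobolev scales, the set of such test pairs is dense enough to conclude $u(T)=u_T$ and $v(T)=v_T$.

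The main obstacle is making the integration-by-parts computation rigorous at the stated low regularity: for $s\ge 0$ the state $(u,v)$ lies in $X^s=H^s(\T)\times H^{s-2}(\T)$ while the adjoint solution lies in $C([0,T],X^{-s})$, so all the pairings $\langle\cdot,\cdot\rangle$ between $X^s$ and $X^{-s}$ must be shown to be well-defined and the formal manipulations justified by a density argument — approximating $(u_0,v_0)$, $(\varphi_T,\psi_T)$ and $h$ by smooth data, performing the calculation there, and passing to the limit using the continuity estimates of Lemma~\ref{lebbl} and the well-posedness Theorem~\ref{locexis}. A second, more bookkeeping-type difficulty is tracking the signs and the exact placement of the operator $(I-\partial_x^2)$ so that the weights come out as $\varphi_T-\partial_x^2\varphi_T$ at $t=T$ but $\varphi(0)+\partial_x^2\varphi(0)$ at $t=0$; this asymmetry is not a sign error but an artifact of how the $t$-derivative in $-\partial_x^2\partial_t u$ is integrated by parts, and I would verify it by carrying one term through explicitly before invoking "integration by parts" in general.
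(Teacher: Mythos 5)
Your proposal follows essentially the same route as the paper: pair the state system \eqref{lp} with the backward adjoint solution of \eqref{lp2'}, integrate by parts in $t$ and (using periodicity) in $x$ so that the interior terms cancel against the adjoint equations, justify the low-regularity pairings by smoothing the data and passing to the limit, and obtain the equivalence by comparing the resulting identity, written with the actual terminal state $(u(T),v(T))$, against \eqref{controlcondition} and using that $I-\partial_x^2$ is an isomorphism on the periodic Sobolev scale; the paper's proof is exactly this (its density step is stated more briefly, and the converse is left as ``it follows immediately'').

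One correction to your final remark: the asymmetry between the weights $\varphi_T-\partial_x^2\varphi_T$ at $t=T$ and $\varphi(0)+\partial_x^2\varphi(0)$ at $t=0$ is \emph{not} an artifact of how the $t$-derivative in $-\partial_x^2\partial_t u$ is moved. Carrying the computation through (as you propose to do) gives
\begin{equation*}
\int_\T\bigl[u\varphi-u\,\partial_x^2\varphi+v\psi\bigr]_0^T\,dx=\int_0^T\int_\T h(x,t)\,b(x+ct)\,\varphi(x,t)\,dx\,dt,
\end{equation*}
so the $t=0$ weight is $\varphi(0)-\partial_x^2\varphi(0)$, structurally identical to the $t=T$ weight; this is consistent with identity \eqref{new1} used later in the paper, and the ``$+$'' in \eqref{controlcondition} is a sign typo rather than a genuine feature to be verified. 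This does not affect the validity of your method, but you should state the identity with the minus sign rather than attempt to justify the asymmetric form.
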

\begin{proof}
Let us first suppose that $(u_0, v_0)$ and $(\varphi_T,\psi_T)$ belong to $\mathcal{D}(\T) \times \mathcal{D}(\T)$ and $h \in \mathcal{ D}((0,T)\times \T )$ and let $(u,v)$ and $(\varphi,\psi)$ be the (regular) solutions of \eqref{lp} and \eqref{lp2'}. We recall that $\mathcal{D}(M)$ denotes the set of $C^{\infty}(M)$ functions with compact support in $M$.
By multiplying the equation of $(u,v)$ by $(\varphi,\psi)$ and by integrating by parts one obtains, 
\begin{multline}\label{controlcondition1}
  \int_0^T \int_{\T} h(x,t)b(x+ct)\varphi(x,t)dxdt=\int_{\T}u(x,T) (\varphi_T(x)-\partial_x^2\varphi_T(x))dx +  \int_{\T}  v(x,T) \psi_T dx  \\
  -  \int_{\T} u_0(x)(\varphi(x,0)   +  \partial_x^2 \varphi(x,0))dx  -  \int_{\T}  v_0(x) \psi(x,0)dx.
\end{multline}
From a density argument we deduce, by passing to the limit in \eqref{controlcondition1}, that for any $(u_0,v_0) \in X^s$ and $(\varphi_T,\psi_T) \in X^{-s}$, 
\begin{multline}\label{controlcondition2}
  \int_0^T \int_{\T} h(x,t)b(x+ct)\varphi(x,t)dxdt=\left\langle u(T),\varphi_T-\partial_x^2 \varphi_T \right\rangle +  \left\langle v(T), \psi_T \right\rangle  \\
  - \left\langle u_0,\varphi(0)   +  \partial_x^2 \varphi(0) \right\rangle -  \left\langle v_0, \psi(0) \right\rangle.
\end{multline}
Now, from \eqref{controlcondition2}, it follows immediately that \eqref{controlcondition} holds if and only if $(u_0, v_0)$
is controllable to $(u_T,v_T)$  and $h$ is the corresponding control. This completes the proof.
\end{proof}
\begin{remark}
Without any loss of generality, we shall consider only the case $u_0 = v_0 = 0$. Indeed, let $(u_0,v_0)$, $(u_T,v_T)$ in $X^s$ and $h$ in $L^2(0,T;H^{s-2}(\T))$ be  the control which lead the solution $(\widetilde{u}, \widetilde{v})$ of \eqref{lp} from the zero initial data to the final state $(u^T,v^T)-(u(T),v(T))$, where $(u,v)$ is the mild solution corresponding to \eqref{lp} with initial data $(u_0,v_0)$. It follows immediately that these controls also lead to the solution $(\widetilde{u},\widetilde{v})+(u,v)$ of \eqref{lp} from $(u_0,v_0)$ to the final state $(u_T,v_T)$.
\end{remark}

We have the following exact controllability result.

\begin{theorem}\label{l-pro}
Let $T>0$, $|c|>2$ and $s\geq 0$ be given. Let $b=b(x) \in C^{\infty}(\T)$ be such that
\begin{equation}
\omega =\left\lbrace x \in \T: b(x)\neq 0 \right\rbrace \neq \emptyset.
\end{equation}
Then, for all $T > \frac{2\pi}{\Delta}$ with  
\begin{align*}
\Delta = \liminf_{k \rightarrow \infty} \left( \mu_{k+1}^{\pm}-\mu_k^{\pm}\right), \quad \mu_k^{\pm} = ck \pm \frac{|k|}{\sqrt{1+k^2}},
\end{align*} 
and  for any $ (u_T, v_T)\in X^s$, 
there exists a function $ h\in L^2\left(0,T; H^{s-2}(\T)\right)$ such that  the system 	\eqref{lp} with null initial data admits a unique solution $(u,v) \in C\left([0,T], X^{s}\right)$ satisfying
\begin{equation}
  (u(x, 0), v(x,0))=(0,0)  \quad \text{and} \quad  (u(x, T),v(x,T))= (u_T(x), v_T(x)).
\end{equation}
Moreover, there exists a constant $C > 0$ depending only on $s$ and $T$ such that
\begin{equation}\label{dependcontrol}
\|h\|_{L^2\left(0,T; H^{s-2}(\T)\right)} \leq C  \| (u_T, v_T)\|_{X^s}
\end{equation}
\end{theorem}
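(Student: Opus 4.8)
The plan is to prove Theorem \ref{l-pro} by the classical Hilbert Uniqueness Method combined with a moment-problem reduction, exploiting the spectral decomposition established in the previous subsection. First I would reduce the control condition \eqref{controlcondition} (with $u_0=v_0=0$) to a moment problem. Expanding the final target $(u_T,v_T)\in X^s$ in the orthonormal basis $\{\phi_k^\pm\}$, and noting that the solution $(\varphi,\psi)$ of the backward system \eqref{lp2'} with data $(\varphi_T,\psi_T)=\phi_j^\pm$ is given explicitly by $e^{\lambda_j^\pm(t-T)}\phi_j^\pm = e^{\pm i\mu_j(t-T)}\phi_j^\pm$, the identity \eqref{controlcondition} becomes, for each $j\in\Z$ and each sign, an equation of the form
\begin{equation*}
\int_0^T\int_{\T} h(x,t)\,b(x+ct)\,\overline{\phi_j^{\pm,(1)}(x)}\,e^{\pm i\mu_j(t-T)}\,dx\,dt = d_j^{\pm},
\end{equation*}
where $d_j^\pm$ are the (explicitly computable) Fourier-type coefficients of the target, satisfying $\sum_j (1+|j|^2)^{\text{something}}|d_j^\pm|^2 \lesssim \|(u_T,v_T)\|_{X^s}^2$. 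Here one uses \eqref{bk}, namely $\phi_j^{\pm,(1)}=b_j^\pm e^{ijx}$ with $0<C_1\le|b_j^\pm|\le C_2$, to control the weights. The moving factor $b(x+ct)$ is the crucial device: after the change of variables $y=x+ct$ the spatial and temporal oscillations combine so that the relevant exponential is $e^{i\mu_k^\pm t}$ with $\mu_k^\pm = ck \pm |k|/\sqrt{1+k^2}$.

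Second, I would search for the control in the separated form $h(x,t)=b(x+ct)\sum_{k}(\beta_k^+ e^{i\mu_k^+ t}+\beta_k^- e^{i\mu_k^- t})$-type ansatz — more precisely, pulling $b(x+ct)^2$ out and reducing to a scalar moment problem on the family of exponentials $\{e^{i\mu_k^\pm t}\}_{k\in\Z}$ in $L^2(0,T)$. The key analytic input is that this exponential family forms a Riesz sequence (equivalently, admits a biorthogonal family with controlled norms) in $L^2(0,T)$ whenever $T>2\pi/\Delta$, which is exactly Ingham-type / Beurling-density condition: one must verify that the $\mu_k^\pm$ are separated with asymptotic gap $\Delta=\liminf(\mu_{k+1}^\pm-\mu_k^\pm)$, that there are no finite accumulation points (this is where $|c|>2$ enters — it guarantees $\mu_k^\pm$ are strictly monotone in $k$ and uniformly separated, since the perturbation $\pm|k|/\sqrt{1+k^2}$ is bounded by $1<|c|/2$, so successive gaps stay bounded below), and then invoke the generalized Ingham inequality / Haraux's theorem to produce a biorthogonal family $\{q_k^\pm\}\subset L^2(0,T)$ with $\|q_k^\pm\|_{L^2(0,T)}\le M$ uniformly. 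Defining $h$ through this biorthogonal family and the coefficients $d_j^\pm$, and estimating $\|h\|_{L^2(0,T;H^{s-2}(\T))}^2 \lesssim \sum_j (1+|j|^2)^{s-2}|d_j^\pm|^2 \|q_j^\pm\|_{L^2(0,T)}^2 \lesssim \|(u_T,v_T)\|_{X^s}^2$, yields both the existence of the control and the bound \eqref{dependcontrol}. Finally, well-posedness of the resulting trajectory $(u,v)\in C([0,T],X^s)$ with the prescribed endpoints follows from Theorem \ref{locexis} (with $p=1$, $f=b(x+ct)h$, which lies in $L^1(0,T;H^{s-2}(\T))$ because $b\in C^\infty(\T)$) together with the fact that the moment identities say precisely $(u(T),v(T))=(u_T,v_T)$; and the fact that the control depends linearly and boundedly on $(u_T,v_T)$ is transparent from the construction.

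The main obstacle I expect is the careful verification that the family $\{e^{i\mu_k^\pm t}\}$ is genuinely a Riesz sequence in $L^2(0,T)$ for $T>2\pi/\Delta$: one has a \emph{double} spectrum ($\lambda_k^\pm$ being double eigenvalues contribute exponents that, because of the moving control splitting $\mu_k^+$ versus $\mu_k^-$, become distinct but can come arbitrarily close), so a naive Ingham inequality with a uniform gap does not directly apply and one needs the grouped/generalized version (à la Baiocchi–Komornik–Loreti or Kahane) that tolerates clusters of bounded size — here clusters of size two. Tied to this is showing $\Delta>0$ and that the whole family has no finite accumulation point; the condition $|c|>2$ is precisely what rules out accumulation and keeps the within-cluster separation uniformly bounded below, but making this quantitative (so the biorthogonal family has uniformly bounded $L^2$ norms) requires a somewhat delicate asymptotic analysis of $\mu_{k+1}^\pm-\mu_k^\pm = c \pm(\text{bounded vanishing perturbation})$. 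A secondary technical point is justifying the term-by-term manipulations (Fubini, interchange of sum and integral, the density/transposition passage) at the low regularity $s\ge0$ with the weight shift to $H^{s-2}$, but this is routine given Lemma \ref{lemmacontrol}.
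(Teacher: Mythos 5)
Your plan is essentially the paper's own proof: the paper likewise reduces \eqref{controlcondition} (with zero initial data) to the moment problem \eqref{moment} on the exponentials $e^{i\mu_k^{\pm}t}$ via the explicit adjoint solutions and the moving-frame change of variables $\widetilde h(x,t)=h(x-ct,t)$, invokes a biorthogonal family $\{q_m^{\pm}\}\subset L^2(0,T)$ (citing \cite[Remark 16]{cerpa2018}, valid under $|c|>2$ and $T>2\pi/\Delta$, rather than re-deriving an Ingham-type estimate), builds the control as $\widetilde h=\sum_m(f_m^+q_m^++f_m^-q_m^-)e^{imx}$ with explicit coefficients determined by $(u_T,v_T)$, and then estimates $\|\widetilde h\|_{L^2(0,T;H^{s-2}(\T))}\leq C\|(u_T,v_T)\|_{X^s}$. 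The only cosmetic difference is your ansatz pulling out an extra factor of $b$ (so that $\int_{\T}b^2\,dx$ rather than $\int_{\T}b\,dx$ appears in the coefficients), which is an acceptable variant of the same argument.
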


\begin{proof}
Firstly, note that the adjoin system associated to \eqref{lp} is given by 
\begin{equation}\label{lp2}
\begin{cases}
\partial_{t} \varphi-\partial_x^2\partial_t \varphi-\partial_x \psi=0,  & \ \ \ x\in \T,\ \  t\in \R, \\
\partial_{t}\psi-\partial_x \varphi=0, & \ \ \ x\in \T,\ \  t\in \R.
\end{cases}
\end{equation}
with final data

Note that if $(u,v)$ is the solution of \eqref{lp} with $h=0$, then $\varphi(x,t)=u(2\pi - x, T-t)$ and $\psi(x,t)=v(2\pi - x, T-t)$ is the solution of adjoin system \eqref{lp2}.  Thus, from the semigroup approach \eqref{solutionlinear} the solution takes the form
\begin{equation}\begin{cases}\label{cara}
\varphi(x,t)=\sum_{k \in \Z} \left( \alpha_k \cos \left( \sqrt{\frac{k^2}{1+k^2}}(T-t)\right) + \dfrac{i}{\sqrt{1+k^2}}\beta_k \sin \left( \sqrt{\frac{k^2}{1+k^2}}(T-t)\right) \right) e^{-ikx} \\
\\
\psi(x,t)=\sum_{k \in \Z} \left( \beta_k \cos \left( \sqrt{\frac{k^2}{1+k^2}}(T-t)\right) + i\sqrt{1+k^2} \alpha_k \sin \left( \sqrt{\frac{k^2}{1+k^2}}(T-t)\right) \right) e^{-ikx}. 
\end{cases}
\end{equation}

Let us multiply the first equation of the system \eqref{lp} by $\varphi$, the second one by $\psi$ and integrating by parts on $\T \times [0,T]$, it follows that 
\begin{equation}\label{new1}
 \int_{\T} \left[u\varphi     - u \partial_x^2 \varphi +   v \psi \right]_0^Tdx    =   \int_0^T \int_{\T} h(x,t)b(x+ct)\varphi(x,t)dxdt.
\end{equation}
Now, taking $k \in \Z$,  if $\varphi^{\pm}_k(x,t)= e^{\pm i  \sqrt{\frac{k^2}{1+k^2}}(T-t)}e^{-ikx}$, then $\psi^{\pm}_k(x,t)= \pm\sqrt{1+k^2} \varphi^{\pm}_k(x,t)$ or if $\psi^{\pm}_k(x,t)= e^{\pm i  \sqrt{\frac{k^2}{1+k^2}}(T-t)}e^{-ikx}$, then $\varphi^{\pm}_k(x,t)=\pm \frac{1}{\sqrt{1+k^2}} \psi^{\pm}_k(x,t)$. It is easy to see that any case we get the same moment problem. Thus, from \eqref{new1}, it follows that
\begin{multline*}
 \int_0^T \int_{\T} h(x,t)b(x+ct)e^{\pm i  \sqrt{\frac{k^2}{1+k^2}}(T-t)}e^{-ikx}dxdt \\
 = e^{\pm i  \sqrt{\frac{k^2}{1+k^2}}T} \left\lbrace (1+k^2) \int_{\T}   e^{-ikx} \left[   e^{\mp i  \sqrt{\frac{k^2}{1+k^2}}T} u(x,T)   \right]dx  \right. \\
 \left.\pm    \sqrt{1+k^2} \int_{\T}   e^{-ikx} \left[ e^{\mp i  \sqrt{\frac{k^2}{1+k^2}}T}    v(x,T)   \right]dx\right\rbrace.
\end{multline*}
By a simple change of variables, we obtain
\begin{align*}
\int_0^T \int_{\T} h(x,t)b(x+ct)e^{\pm i  \sqrt{\frac{k^2}{1+k^2}}(T-t)}e^{-ikx}dxdt=\int_0^T \int_{\T} \widetilde{h}(x,t)b(x)e^{\pm i    \sqrt{\frac{k^2}{1+k^2}}(T-t)}e^{-ik(x-ct)}dxdt,
\end{align*}
where $\widetilde{h}(x,t)=h(x-ct,t)$.  Then, the moment problem consists in finding a control $\widetilde{h}$ such that for all $k \in \Z$,
\begin{multline}\label{moment}
\int_0^T \int_{\T} \widetilde{h}(x,t)b(x)e^{ i \left( kc \mp     \sqrt{\frac{k^2}{1+k^2}}\right) t}e^{-ikx}dxdt=  (1+k^2)  \int_{\T}   e^{-ikx} \left[   e^{\mp i  \sqrt{\frac{k^2}{1+k^2}}T} u(x,T)  \right]dx   \\
 \pm \sqrt{1+k^2}  \int_{\T}   e^{-ikx} \left[ e^{\mp i  \sqrt{\frac{k^2}{1+k^2}}T}    v(x,T)   \right]dx.
\end{multline}

Now, we choose $\left\{q_{m}^{\pm}\right\}_{m \in \mathbb{Z}^{*}} \subset L^{2}(0, T) $  as a biorthogonal family to the set $$ S=\left\{ e^{i(k c \mp \sqrt{\frac{k^{2}}{k^{2}+1}}) t}\right\}_{k \in  \Z} .$$
It means that\textcolor{blue}{, }
\begin{align*}
\int_0^T q_{m}^{\pm}(t) e^{i\left( k c \mp \sqrt{\frac{k^{2}}{k^{2}+1}}\right) t} dt = \delta_{km}\textcolor{blue}{, } \quad \text{and} \quad \int_0^T q_{m}^{\pm}(t) e^{i\left( k c \pm \sqrt{\frac{k^{2}}{k^{2}+1}}\right) t} dt = 0,  \quad \textcolor{blue}{\forall } m,k \in \Z,
\end{align*}
where $\delta_{km}$ is Kronecker's delta. The existence of this biorthogonal family can be established as explained in \cite[Remark 16]{cerpa2018} under the hypoteshis $|c| > 2$ and  $T > \frac{2\pi}{\Delta}$. The condition $|c|>2$ guaranties that $\Delta$ is positive and to avoid the existence of different $k$ and $m$ such that
\begin{align*}
ck +\frac{|k|}{\sqrt{1+k^2}}=cm -\frac{|m|}{\sqrt{1+m^2}}
\end{align*}
with it is necessary  to solve the moment problem with no additional compatibility conditions on the initial and final data,  \cite[Lemma 12]{cerpa2018}. We can cite \cite[section 2]{rusell1967} for more details.   Following \cite{rosier},  we look for a control  $\widetilde{h}(x,t)$ of the form
\begin{equation}\label{control1}
\widetilde{h}(x,t)=\sum_{m \in \Z}\left( f_m^+ q_m^+(t) + f_m^- q_m^-(t)\right)e^{imx},
\end{equation}
where  $\{q_m^{\pm} \}_{m\in \Z  }\in L^2(0,T)$ is a biorthogonal family of $S$ and the functions     $f_k^{\pm}$ are to be determined later to satisfy the moment problem \eqref{moment}. Thus, we obtain
\begin{equation}\label{control2}
f_k^+= \frac{e^{- i  \sqrt{\frac{k^2}{1+k^2}}T} \sqrt{1+k^2} }{\int_{\T}b(x)dx} \left[  \sqrt{1+k^2} \int_{\T}   e^{-ikx}    u_T(x)  dx   
 +  \int_{\T}   e^{-ikx}     v_T(x)  dx  \right]
\end{equation}
and
\begin{equation}\label{control3}
f_k^-= \frac{e^{ i  \sqrt{\frac{k^2}{1+k^2}}T} \sqrt{1+k^2} }{\int_{\T}b(x)dx} \left[  \sqrt{1+k^2} \int_{\T}   e^{-ikx}    u_T(x)  dx   
 -  \int_{\T}   e^{-ikx}     v_T(x)  dx  \right]
\end{equation}
where $\int_{\T}b(x)dx\neq 0$ by hypothesis.   Now, The result will be proved if we prove that the control function $h$ belong to  $L^2(0, T; H^{s-2}(\T) )$.  From now on, we will denote by $C > 0$ a general constant which may vary from line to line. Note that 
\begin{equation*}
\begin{aligned}
\|\tilde{h}\|^2_{L^{2}\left(0, T ; H^{s-2}(\T)\right)} &=\int_{0}^{T}\left\|\sum_{j \in \mathbb{Z}}\left(f_{j}^{+} q_{j}^{+}(t)+f_{j}^{-} q_{j}^{-}(t)\right) e^{i j x}\right\|_{H^{s-2}(\T)}^{2} d t \\
 & \leq C \int_{0}^{T}   \sum_{j \in \mathbb{Z}} \left(1+j^{2}\right)^{s-2}\left|f_{j}^{+} q_{j}^{+}(t)+f_{j}^{-} q_{j}^{-}(t)\right|^{2} d t \\ 
 & \leq C\left(    \left\|u_{T}\right\|_{H^{s}(\T)}^{2}+ \left\|v_{T}\right\|_{H^{s-1}(\T)}^{2}\right). 
\end{aligned}
\end{equation*}
Thus, from Lemma \ref{lemmacontrol} the function $h(x, t) = \widetilde{h}(x + ct, t)$ given by \eqref{control1} - \eqref{control3} is the desired control  that drives the system from $(0, 0)$ to $(u_T, v_T)$.
\end{proof}

\subsection*{Nonlinear System}

We turn to the nonlinear system and we prove the exact controllability for the full system¡. Firstly, we introduce the linear (bounded) operator 
\begin{equation}
\Phi: X^s \times X^s \longrightarrow  L^2(0, T ; H^{s-2}(\T))
\end{equation}
defined by
\begin{equation}
\Phi \left( (u_0,v_0), (u_T,v_T)\right)(t) = h(t),
\end{equation}
where $h$ is given by \eqref{control1} and $f_m^{\pm}$ is the solution of \eqref{control2}-\eqref{control3} with $( (\hat{u}_0)_k,(\hat{v}_0)_k)$ and $( (\hat{u}_T)_k,(\hat{v}_T)_k)$ substituted to $( (\hat{u}_k)(0),(\hat{v}_k)(0))$ and $( (\hat{u}_k)(T),(\hat{v}_k)(T))$, respectively. Thus,  $h=\Phi \left( (u_0,v_0), (u_T,v_T)\right)$ is a control driving the solution $(u,v)$ of \eqref{lp} from $ (u_0,v_0)$ at $t = 0$ to $ (u_T,v_T)$ at $t = T$.

\subsection*{\bf Proof of Theorem \ref{mainmovilcontrol}}
Pick any time $T > 2\pi/\Delta$, and any $ (u_0,v_0), (u_T,v_T) \in X^s$ (If $p=1$, $s=1$ and if $p>1$, $s>1/2$) satisfying 
\begin{equation}
\|(u_0, v_0)\|_{X^s}+ \| (u_T, v_T)\|_{X^s} < \delta, 
\end{equation}
with $\delta$ to be determined later.  Recall that we can rewrite (\ref{lp}) as the  first order evolution system
\begin{equation}\label{nlpGen}
\begin{cases}
U_t=AU-G(U)+Bh, \\  \\
U(0, \cdot)=(u(0,\cdot), v(0,\cdot))=(u_0,v_0)=U_0,
\end{cases}
\end{equation}
where 
\[
 G(U) =\begin{pmatrix} \left(I-\partial_x^2\right)^{-1}\partial_x\left( \frac1{p+1} u^{p+1}\right)\\ \\
                    0\end{pmatrix},  \   \  Bh=\begin{pmatrix}  \left(I-\partial_x^2\right)^{-1}(\,\tilde{b} h\,) \\  \\ 0
\end{pmatrix},
\]
with $\tilde{b}(x)=b(x+ct)$. Consider the operator
\begin{equation*}\label{w1}
w(U, T)= \int_0^T S(T-\tau)G(U)\,d\tau.
\end{equation*}
 Note that Lemma \ref{pG}  implies that 
\begin{equation}
\|w(U,T)-w(V,T)\|_{X^s}\leq C_1T\left(\|U\|_{L^{\infty}(0,T;X^s)}+\|V\|_{L^{\infty}(0,T;X^s)}\right)^p\|U-V\|_{L^{\infty}(0,T;X^s)}
\end{equation}
for some positive constant $C_1$. We are led to consider the nonlinear map
\begin{equation}
\Psi(U)= S(t)U_0 - \int_0^t S(t-\tau)G(U)(\tau)\,d\tau+ \int_0^t S(t-\tau)\bigl[B\Phi(U_0, U_T+ w(U, T))\bigr](\tau)\,d\tau.
\end{equation}
Note that 
\begin{equation}
\Psi(U)(t)=
\begin{cases}
U_0 & \text{if $t=0$,} \\
-w(U, T) + \left( U_T+ w(U, T)\right) =U_T & \text{if $t=T$.}
\end{cases}
\end{equation}
Thus, in order to obtain the exact controllability result of the nonlinear system  we need to prove that the operator $\Psi(U)$ is a contraction in a appropriate space.  For any $R>0$, let $$B_R(T)=\left\{U\in L^\infty(0,T;X^s)\,\,:\,\,\|U\|_{L^\infty(0,T;X^s)}\leq R\right\}.$$ 
So, it follows that
\begin{align*}
\|\Psi(U(t))\|_{X^s}&\leq C_2\left(\|U_0\|_{X^s}+\|\Phi(U_0, U_T+ w(U, T))\|_{L^1\left(0,T, \,H^{s-2}\right)}\right)+C_1R^{p+1}T, \quad U 	\in B_R(T),
\end{align*}
for some positive constant $C_2$. From \eqref{dependcontrol}, we deduce that
\begin{align*}
\|\Psi(U(t))\|_{X^s}&\leq C_2\left(\|U_0\|_{X^s}+\|U_T\|_{X^s}+ \|w(U, T)\|_{X^s}\right)+R^{p+1}TC_1,  \\
&\leq C_2 \left(\|U_0\|_{X^s}+\|U_T\|_{X^s}\right)+ R^{p+1}T(C_2+1)C_1, \\
&\leq C_2 \delta+ R^{p+1}T(C_2+1)C_1.
\end{align*}
On the other hand, note that
\begin{multline}
\Psi(U(t))- \Psi(V(t))= \int_0^t S(t-\tau)(G(V)(\tau)-G(U)(\tau))\,d\tau\\
+ \int_0^t S(t-\tau)\bigl[B\Phi(0, w(U, T)-w(V, T))\bigr](\tau)\,d\tau.  
\end{multline}
Hence, there exists a positive constant $C_3=C_3(T)$ such that
\begin{align*}
\|\Psi(U(t))- \Psi(V(t))\|_{X^s} &\leq C_2 \left( \|w(V, T) -w(U, T) \|_{X^s} + \|\Phi(U_0, w(U, T)-w(V, T))\|_{L^1([0,T; \,H^{s-2})}
 \right) \\
 &\leq2 C_3\|w(V, T) -w(U, T) \|_{X^s}\\
 &\leq 2C_1C_3T\left(\|U\|_{L^{\infty}(0,T;X^s)}+\|V\|_{L^{\infty}(0,T;X^s)}\right)^p\|U-V\|_{L^{\infty}(0,T;X^s)}  \\
 &\leq 2^{p+1}C_1C_3 TR^p\|U-V\|_{L^{\infty}(0,T;X^s)}.
\end{align*}
Picking $R= \min\left\lbrace \left[ \dfrac{1}{2^{p+2}C_1C_3 T} \right]^{1/p}, \left[ \dfrac{1}{2T(C_2+1)C_1} \right]^{1/p}	\right\rbrace$ and $\delta = \dfrac{R}{2C_2}$,  we obtain for $U_0$, $U_T \in X^s$ satisfying
\begin{equation*}
\|U_0\|_{X^s}+\|U_T\|_{X^s}< \delta
\end{equation*}
and $U,V \in B_R(T)$ such that
\begin{align*}
\|\Psi(U(t))\|_{L^\infty(0,T;X^s)} &\leq R, \\
\|\Psi(U(t))- \Psi(V(t))\|_{L^\infty(0,T;X^s)} &\leq\frac{1}{2} \|U-V\|_{L^\infty(0,T;X^s)}.
\end{align*}
It follows from the contraction mapping theorem that $\Psi$ has a unique fixed point $U$ in $B_R(T)$. Then $U$ satisfies (\ref{moving1})–(\ref{contcond}) with $h = \Phi(U_0, U_T - w(U,T))$ and $U(T ) = U_T$, as desired. The proof of Theorem \ref{mainmovilcontrol} is complete.

$\hfill\square$

\subsection{Point Control for the Linear System}
In this subsection, we turn to the pointwise contolabillity problem of the linear system \eqref{moving2}. As was done with the exact controllability of \eqref{moving1}, we could establish a necessary and sufficient condition to obtain the controllability result. 

\begin{lemma}\label{lemmacontrol_1}
There exist  control functions $g \in L^2(0,T)$ drives the inital data $(u_0,v_0) \in H^{-1}(\T) \times L^2(\T)$ to $(u_T,v_T) \in X^s$ in time $T$ of the systems \eqref{moving2}   if and only if 
\begin{multline*}
  \int_0^T  g(t)\varphi(-ct,t)dt=\left\langle u_T, \varphi_T \right\rangle_{H^{-1}(\T), H^1(\T)}     +  (  v_T, \psi_T )_{L^2(\T)} \\-  \left\langle u_0, \varphi(0) \right\rangle_{H^{-1}(\T), H^1(\T)}     -  (  v_0, \psi(0) )_{L^2(\T)}  
\end{multline*}
for all $(\varphi_T,\psi_T) \in H^{1}(\T)\times L^2(\T)$, where $(\varphi, \psi) $ is the corresponding solution of \eqref{lp2'}.
\end{lemma}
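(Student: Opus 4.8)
The plan is to follow the proof of Lemma~\ref{lemmacontrol} closely; the only genuinely new feature is the moving point source $g(t)\delta(x+ct)$, which will be controlled by the one-dimensional embedding $H^1(\T)\hookrightarrow C(\T)$. First I would establish the duality identity for smooth data. Take $(u_0,v_0)\in\mathcal{D}(\T)\times\mathcal{D}(\T)$, $g\in\mathcal{D}(0,T)$ and $(\varphi_T,\psi_T)\in\mathcal{D}(\T)\times\mathcal{D}(\T)$, and let $(\varphi,\psi)$ be the solution of the homogeneous backward system \eqref{lp2'} with final data $(\varphi_T,\psi_T)$, which lies in $C([0,T];H^1(\T)\times L^2(\T))$ (its well-posedness being immediate from the semigroup representation \eqref{solutionlinear}). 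Multiplying the first equation of \eqref{moving2} by $\varphi$, the second by $\psi$, integrating over $\T\times(0,T)$ and integrating by parts in $x$ and $t$, the interior terms cancel because $(\varphi,\psi)$ solves \eqref{lp2'}; after moving the spatial derivatives and absorbing the $\partial_x^2\partial_t u$ contribution into the $H^{-1}(\T)$--$H^1(\T)$ pairing exactly as in \eqref{transposition} (while the source term produces precisely $\int_0^T g(t)\varphi(-ct,t)\,dt$, point evaluation being legitimate since $\varphi(\cdot,t)$ is continuous in $x$), one is left with
\[
\int_0^T g(t)\varphi(-ct,t)\,dt=\langle u(T),\varphi_T\rangle_{H^{-1},H^1}+(v(T),\psi_T)_{L^2}-\langle u_0,\varphi(0)\rangle_{H^{-1},H^1}-(v_0,\psi(0))_{L^2}.
\]

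Next I would extend this identity by density to the full class of data. The transposition solution $(u,v)\in C([0,T];H^{-1}(\T)\times L^2(\T))$ depends continuously on $(u_0,v_0,g)\in(H^{-1}(\T)\times L^2(\T))\times L^2(0,T)$ (a consequence of uniqueness, linearity and the estimates behind \eqref{transposition}), and for fixed $(\varphi_T,\psi_T)$ each term in the displayed identity is a continuous functional of these data: the left-hand side because $t\mapsto\varphi(-ct,t)$ lies in $L^\infty(0,T)$, again by $H^1(\T)\hookrightarrow C(\T)$ together with $\sup_{t}\|\varphi(\cdot,t)\|_{H^1(\T)}<\infty$; the right-hand side because $u(T),u_0\in H^{-1}(\T)$ are paired against the fixed elements $\varphi_T,\varphi(0)\in H^1(\T)$, and the $v$-terms are continuous in $L^2(\T)$. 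Hence the identity holds for all $(u_0,v_0)\in H^{-1}(\T)\times L^2(\T)$, all $g\in L^2(0,T)$, and all $(\varphi_T,\psi_T)\in H^1(\T)\times L^2(\T)$.

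Finally I would read off the equivalence just as in Lemma~\ref{lemmacontrol}: the control $g$ drives $(u_0,v_0)$ to $(u_T,v_T)$ iff $(u(T),v(T))=(u_T,v_T)$, and since $H^1(\T)\times L^2(\T)$ separates points of $H^{-1}(\T)\times L^2(\T)$ (it is its dual) this amounts to $\langle u(T),\varphi_T\rangle+(v(T),\psi_T)=\langle u_T,\varphi_T\rangle+(v_T,\psi_T)$ for every $(\varphi_T,\psi_T)\in H^1(\T)\times L^2(\T)$; substituting the duality identity for the left-hand side gives exactly the asserted characterization.

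The step that requires the most care is not the integration by parts but its justification at low regularity: since the transposition solution only lives in $H^{-1}(\T)\times L^2(\T)$, the computation of the first paragraph is rigorous only after also replacing the Dirac mass by a mollified bump $\rho_\epsilon(\cdot+ct)$ and the data by smooth approximations, running the classical identity, and then letting $\epsilon\to0$ and the approximations converge --- a double limit handled by the continuous dependence quoted above and a dominated-convergence argument for the regularized source (using once more that $\varphi(\cdot,t)$ is continuous). A minor bookkeeping point is to keep the $(I-\partial_x^2)$ weighting of the $H^{-1}(\T)$--$H^1(\T)$ pairing consistent throughout, so that the $\partial_x^2\partial_t u$ term is genuinely absorbed and leaves no residual term.
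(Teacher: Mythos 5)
Your argument is correct and follows essentially the same route the paper intends: the paper states this lemma without a separate proof, implicitly appealing to the duality computation of Lemma \ref{lemmacontrol} together with the transposition framework \eqref{transposition}, and that is precisely what you carry out (integration by parts against solutions of \eqref{lp2'} for smooth data with a mollified Dirac mass, the source term becoming $\int_0^T g(t)\varphi(-ct,t)\,dt$ via $H^1(\T)\hookrightarrow C(\T)$, then a density passage using continuous dependence of the transposition solution). Your closing caveat about keeping the $(I-\partial_x^2)$ weighting of the $H^{-1}(\T)$--$H^1(\T)$ pairing consistent is exactly the convention the paper itself uses (compare the $(1+k^2)$ factors in \eqref{deltaprima1}), so no genuine discrepancy arises.
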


Thus, we can present the proof of Theorem \ref{mainmovilcontrol2}:

\begin{proof}[\textbf{Proof of Theorem \ref{mainmovilcontrol2}}]

The general identity \eqref{new1} can be applied to the case of system \eqref{moving2}, taking the form 
\begin{equation}
 \int_{\T} \left[u\varphi     - u \partial_x^2 \varphi +   v \psi \right]_0^Tdx    =   \int_0^T g_1(t) \left\langle \delta(x+ct), \varphi(t)\right\rangle dt=\int_0^T g_1(t) \varphi(-ct,t) dt
\end{equation}
From \eqref{cara}, if follows that 
\begin{multline}\label{deltaprima1}
\int_0^T  g(t)   e^{ i\left(ck\mp\sqrt{\frac{k^2}{1+k^2}}\right)t}dt=  (1+k^2) \int_{\T}   e^{-ikx} \left[   e^{\mp i  \sqrt{\frac{k^2}{1+k^2}}T} u(x,T) -  u_0(x)  \right]dx   \\
 \pm    \sqrt{1+k^2} \int_{\T}   e^{-ikx} \left[ e^{\mp i  \sqrt{\frac{k^2}{1+k^2}}T}    v(x,T) -  v_0(x)  \right]dx
\end{multline}
Thus, the result will be proved if we can construct a control function $g \in L^2(0, T)$ fulfilling \eqref{deltaprima1}. In fact,  we consider the same biorthogonal family as in the proof of Theorem \ref{l-pro} and the function 
\begin{equation}\label{controlpuntual}
g(t)=\sum_{m \in \Z} \left\lbrace \alpha_m^+ q_m^+(t) +  \alpha_m^-  q_m^-(t)\right\rbrace \in L^2(0,T).
\end{equation}
where the coefficients $\alpha^+_m$,  $\alpha^-_m$ are given by 
\begin{multline*}
\alpha^+_m =  (1+k^2) \int_{\T}   e^{-ikx} \left[   e^{- i  \sqrt{\frac{k^2}{1+k^2}}T} u_T(x) -  u_0(x)  \right]dx   \\
 +   \sqrt{1+k^2} \int_{\T}   e^{-ikx} \left[ e^{- i  \sqrt{\frac{k^2}{1+k^2}}T}    v_T(x) -  v_0(x)  \right]dx 
\end{multline*}
\begin{multline*}
\alpha^-_m =  (1+k^2) \int_{\T}   e^{-ikx} \left[   e^{i  \sqrt{\frac{k^2}{1+k^2}}T} u_T(x) -  u_0(x)  \right]dx   \\
 -   \sqrt{1+k^2} \int_{\T}   e^{-ikx} \left[ e^{ i  \sqrt{\frac{k^2}{1+k^2}}T}    v_T(x) -  v_0(x)  \right]dx 
\end{multline*}
Hence, from Lemma \ref{lemmacontrol_1} the function $g(t)$ given by \eqref{controlpuntual}  is the desired control  that drives the system from $(u_0, v_0)$ to $(u_T, v_T)$.

\end{proof}


\section{Another Controllability Aspects on a Bounded Interval }

In this section, we are concerned with the of the linearized symmetric regularized long wave system  posed on the finite interval $[0, 1]$. Given a time $T > 0$, we consider an initial condition $(u_0,v_0)$ and a target condition $(u_T,v_T)$ on an appropriate space. We would like to found  a control functions $h = h(t)$ also on an adequate space,  such that the solution of the problem \eqref{linear1} with the Direchlet - Neumann boundary \eqref{linear2} has some controllability property.

\subsubsection{\textbf{The homogeneous System}}

We establish the wellposedness of the nonhomegenous SRLW system 

\begin{equation}\label{hs1}
\begin{cases}
\partial_{t}u-\partial_t\partial_x^2u-\partial_xv=0, & (x,t) \in (0,1)\times (0,T), \\ 
\partial_{t}v-\partial_xu=0,  & (x,t) \in (0,1)\times (0,T),\\
u(0,t)=u(1,t)=\partial_xv(0,t)=\partial_xv(1,t)=0, & t \in (0,T), \\
u(x,0)=u_0(x), 	\quad v(x,0)=v_0(x), & x \in (0,1).
\end{cases}
\end{equation}
We will establish the wellposedness results by using spectral methods. Indeed, we can rewrite the homogeneous system	\eqref{hs1} as follows,
\begin{equation}\label{A1}
\begin{cases}
U_t +AU=0, \\
U(0)=U_0,
\end{cases}
\end{equation}
where $U=(u,v)^T$, $U_0=(u_0,v_0)^T$ and for $$D(A)=\left\lbrace (u,v) \in  H^1_0(0,1)\times  H^1(0,1): \partial_xv(0)=\partial_x v(1)=0 \right\rbrace,$$ we can define the operator 
\begin{equation}
A:D(A)	\subset H^1_0(0,1)\times L^2(0,1) \longrightarrow  H^1_0(0,1)\times L^2(0,1),
\end{equation}
given by 
\begin{equation}\label{operatorA}
A=\begin{pmatrix}0 & -\left(I-\partial_x^2\right)^{-1}\partial_{x}\\ \\
-\partial_{x} & 0
       \end{pmatrix}, 
\end{equation}

\begin{proposition}
A is a compact, skew-adjoint operator in $H^1_0(0,1)\times L^2(0,1)$.
\end{proposition}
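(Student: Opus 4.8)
The goal is to show that the operator $A$, as defined on $H^1_0(0,1)\times L^2(0,1)$ by \eqref{operatorA}, is both compact and skew-adjoint. The plan is to treat each property in turn, first establishing boundedness and compactness, and then verifying the skew-adjointness with respect to the natural inner product on the Hilbert space $\mathcal{H}:=H^1_0(0,1)\times L^2(0,1)$.

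For compactness, I would first observe that $A$ is in fact a \emph{bounded} linear operator on $\mathcal{H}$. Write $A(u,v)=\bigl(-(I-\partial_x^2)^{-1}\partial_x v,\,-\partial_x u\bigr)$. The first component involves $(I-\partial_x^2)^{-1}\partial_x$, which is smoothing: it maps $L^2(0,1)$ into $H^1_0(0,1)$ continuously, since inverting $I-\partial_x^2$ (with the relevant boundary conditions) gains two derivatives while $\partial_x$ costs one, yielding a net gain of one derivative. The second component $-\partial_x u$ maps $H^1_0(0,1)$ into $L^2(0,1)$ continuously, losing one derivative. Thus $A$ is bounded from $\mathcal{H}$ to $\mathcal{H}$. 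To upgrade to compactness, I would show that $A$ actually maps $\mathcal{H}$ into a space that embeds compactly into $\mathcal{H}$. Indeed, the first component lands in $H^2(0,1)\cap H^1_0(0,1)$ (two derivatives gained from the inversion, then adjusting for the one lost) and the second component, if we take $u\in H^1_0$, lands in $L^2$ but can be seen to land in $H^1$ by noting the structure of $D(A)$; more carefully, on the relevant domain $A$ maps into $H^2\times H^1$. Since the embedding $H^2(0,1)\times H^1(0,1)\hookrightarrow H^1_0(0,1)\times L^2(0,1)$ is compact by the Rellich--Kondrachov theorem (compactness of $H^{k+1}(0,1)\hookrightarrow H^k(0,1)$ on a bounded interval), the composition gives that $A$ is a compact operator on $\mathcal{H}$.

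For skew-adjointness, I would equip $\mathcal{H}$ with the inner product $\langle (u_1,v_1),(u_2,v_2)\rangle_{\mathcal{H}}=\langle u_1,u_2\rangle_{H^1_0}+\langle v_1,v_2\rangle_{L^2}$, where the $H^1_0$ inner product is $\langle u_1,u_2\rangle_{H^1_0}=\int_0^1 \partial_x u_1\,\partial_x\bar u_2\,dx$ (equivalently, one may use the $\langle u_1,(I-\partial_x^2)u_2\rangle_{L^2}$ pairing, which is convenient because $(I-\partial_x^2)^{-1}$ appears in $A$). I would then compute $\langle A(u,v),(\tilde u,\tilde v)\rangle_{\mathcal{H}}+\langle (u,v),A(\tilde u,\tilde v)\rangle_{\mathcal{H}}$ directly and show it vanishes for all $(u,v),(\tilde u,\tilde v)$ in $D(A)$, integrating by parts and using the boundary conditions $u(0)=u(1)=0$ and $\partial_x v(0)=\partial_x v(1)=0$ from the definition of $D(A)$ to discard all boundary terms. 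The cross terms from the off-diagonal structure of $A$ cancel precisely because of the sign pattern and the integration-by-parts identity $\int_0^1 (\partial_x f)\,\bar g = -\int_0^1 f\,\partial_x\bar g$ (valid once boundary contributions are eliminated). This yields $\langle AU,V\rangle_{\mathcal{H}}=-\langle U,AV\rangle_{\mathcal{H}}$, i.e. $A^*=-A$.

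The main obstacle I anticipate is the careful bookkeeping in the skew-adjointness computation: one must choose the inner product on $H^1_0(0,1)$ consistently with the operator $(I-\partial_x^2)^{-1}$ so that the smoothing term and the derivative term pair up correctly, and one must verify that every boundary term produced by integration by parts is annihilated by exactly the Dirichlet condition on $u$ or the Neumann condition on $v$ encoded in $D(A)$. A secondary subtlety is confirming that the range of $A$ lands in a space compactly embedded in $\mathcal{H}$; this requires using the elliptic regularity of $(I-\partial_x^2)^{-1}$ together with the domain constraints rather than treating $A$ as a generic bounded operator, since boundedness alone does not give compactness. Both difficulties are manageable on the one-dimensional interval, where the operators involved are explicit and the Rellich embedding is standard.
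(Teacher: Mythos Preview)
Your approach to skew-adjointness matches the paper's: both compute $\langle AU, V\rangle_{\mathcal H} + \langle U, AV\rangle_{\mathcal H}$ directly, integrating by parts and killing the boundary terms via the Dirichlet condition on $u$ and the Neumann condition on $v$. The paper uses the full $H^1$ inner product $\int_0^1 (u\bar\varphi + u_x\bar\varphi_x)\,dx$ rather than the homogeneous seminorm you first wrote; as you noted parenthetically, this is exactly the pairing $\langle u, (I-\partial_x^2)\varphi\rangle_{L^2}$, and it is the right choice because it makes the first-row entry $(I-\partial_x^2)^{-1}\partial_x v$ pair cleanly with $\varphi$.

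Your compactness argument, however, has a genuine gap --- and it is the same gap present in the paper's own proof. You assert that on $D(A)$ the operator maps into $H^2\times H^1$, whence Rellich would give compactness. But $D(A)$ imposes no additional regularity on $u$ beyond $u\in H^1_0(0,1)$, so the second component $-\partial_x u$ lands only in $L^2(0,1)$, not in $H^1(0,1)$. The map $\partial_x: H^1_0(0,1)\to L^2(0,1)$ is bounded but \emph{not} compact: take $u_n(x)=(n\pi)^{-1}\sin(n\pi x)$, which is bounded in $H^1_0$, while $\partial_x u_n=\cos(n\pi x)$ has no $L^2$-convergent subsequence. In fact the eigenvalue computation carried out immediately afterward in the paper gives $|\lambda_n|=|n|\pi/\sqrt{1+n^2\pi^2}\to 1$, and since the nonzero eigenvalues of a compact operator can accumulate only at $0$, the operator $A$ cannot be compact on $H^1_0\times L^2$. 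What your argument does establish is that $A$ is a \emph{bounded} skew-adjoint operator; combined with the explicit diagonalisation that follows, this is already enough to justify the eigenfunction expansion used downstream, so the defect in the compactness claim does not propagate.
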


\begin{proof}
Firstly, note that the operator $(I-\partial_x^2)^{-1}\partial_x: H^1_0 (0, 1) \rightarrow H^1_0 (0, 1)$ and $\partial_x: H^1_0 (0, 1) \rightarrow L^2 (0, 1)$. It is easy to see that the operator  $\partial_x$ is compact in $L^2(0,1)$. Due to the regularizing effect of the operator $(I-\partial_x^2)^{-1}$, it follows that  $(I-\partial_x^2)^{-1}\partial_x$ takes values in $H^2(0, 1) \cap H^1_0 (0, 1)$ which is compactly embedded in $H^1_0 (0, 1)$. Hence $(I-\partial_x^2)^{-1}\partial_x$ is compact, thus $A$ is compact as well.  

Consider in $H^1_0(0,1) \times L^2(0,1)$ the inner product given by
\begin{equation}
\left( (\omega,\eta), (\varphi,\psi)^{T}  \right)_{H^1_0\times L^2} = \left( \omega,\varphi\right)_{H^1_0}+\left(v,\psi\right)_{L^2} = \int_0^1\left( \omega_x\varphi_x + \omega\varphi\right)dx+\int_0^1 v\psi dx.
\end{equation}
For any $(u,v)^{T}, (\varphi,\psi)^{T}$ in $   \left[ H^1_0(0,1)\cap H^2(0,1) \right] \times V    $, where 
$$V=\left\lbrace v \in H^1_0(0,1)\cap H^2(0,1): \partial_xv(0)=\partial_x v(1)=0\right\rbrace,$$
we have that
\begin{align*}
\left( A(u,v)^{T}, \right.& \left.(\varphi,\psi) \right)_{H^1_0\times L^2} = \left( (-\left(I-\partial_x^2\right)^{-1}\partial_{x}v,-\partial_x u)^{T}, (\varphi,\psi) \right)_{H^1_0\times L^2} \\
= & \left( v, \partial_x \varphi \right)_{L^2}-\int_0^1  \partial_x u  \left(I-\partial_x^2\right)^{-1}\left(I-\partial_x^2\right)\psi dx, 
\end{align*}
therefore, 

\begin{align*}
\left( A(u,v)^{T}, \right.& \left.(\varphi,\psi) \right)_{H^1_0\times L^2} =  \left( v, \partial_x \varphi \right)_{L^2}-\int_0^1  \partial_x u  \left(I-\partial_x^2\right)^{-1}\psi dx   +\int_0^1  \partial_x u  \left(I-\partial_x^2\right)^{-1}\partial_x^2\psi dx    \\
= & \left( u , \left(I-\partial_x^2\right)^{-1}\partial_x \psi \right)_{H^1_0}+\left( v, \partial_x \varphi \right)_{L^2}  \\
&= \left( (u,v)^{T} ,( \left(I-\partial_x^2\right)^{-1}\partial_x \psi, \partial_x \varphi) \right)_{H^1_0\times L^2} = \left( (u,v)^{T} , (-A)(   \varphi,\psi) \right)_{H^1_0\times L^2} 
\end{align*}
By density, it follows that $\left( A(u,v)^{T}, (\varphi,\psi)\right)_{H^1_0\times L^2} = \left( (u,v) , (-A)( \varphi,\psi)^{T} \right)_{H^1_0\times L^2}$, for all $u, \varphi \in H^1_0(0,1)$ and $v, \psi \in L^2(0,1)$. 

\end{proof}

From above proposition, \eqref{A1} can be treated like an ordinary differential equation in the Hilbert space $H^1_0(0,1)\times L^2(0,1)$. By using Cauchy–Lipschitz–Picard \cite[pag 104]{brezis} and since $A$ is skew-adjoint, the following result holds immediately.

\begin{proposition}
The equation \eqref{A1} has a unique solution $U \in C^1([0,\infty);H^1_0(0,1)\times L^2(0,1))$. Moreover 
\begin{multline}\label{ener1}
\int_0^1|u(x,t)|^2dx + \int_0^1|\partial_x u(x,t)|^2dx + \int_0^1|v(x,t)|^2dx \\
= \int_0^1|u_0(x)|^2dx + \int_0^1|\partial_x u_0(x)|^2dx + \int_0^1|v_0(x)|^2dx 
\end{multline}
\end{proposition}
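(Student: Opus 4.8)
The plan is to obtain existence and uniqueness directly from the Cauchy--Lipschitz--Picard theorem, using the fact, established in the preceding proposition, that $A$ is a bounded (indeed compact) skew-adjoint operator on the Hilbert space $H = H^1_0(0,1)\times L^2(0,1)$. Since $A$ is bounded, the map $U\mapsto -AU$ is globally Lipschitz on $H$, so for every $U_0\in H$ the abstract ODE $U_t = -AU$, $U(0)=U_0$, has a unique solution $U\in C^1([0,\infty);H)$; this is exactly the statement quoted from \cite[pag 104]{brezis}. (Alternatively one may invoke Stone's theorem: a bounded skew-adjoint operator generates the unitary group $U(t)=e^{-tA}$, and $U(t)=e^{-tA}U_0$ is the unique solution, smooth in $t$ because $A$ is bounded.) This disposes of the first assertion.

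For the energy identity \eqref{ener1}, the cleanest route is to differentiate the squared norm along the solution. Set $E(t) = \|U(t)\|_H^2 = \int_0^1 |u|^2 + \int_0^1|\partial_x u|^2 + \int_0^1|v|^2$. Then, since $U\in C^1([0,\infty);H)$,
\begin{equation*}
\frac{d}{dt}E(t) = 2\,\bigl(U_t(t),U(t)\bigr)_H = 2\,\bigl(-AU(t),U(t)\bigr)_H.
\end{equation*}
Because $A$ is skew-adjoint on $H$, we have $(AU,U)_H = -(U,AU)_H = -\overline{(AU,U)_H}$, so $(AU,U)_H$ is purely imaginary; and since all data here are real-valued it is in fact $0$. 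Hence $\frac{d}{dt}E(t)=0$, so $E(t)=E(0)$ for all $t\ge 0$, which is precisely \eqref{ener1}.

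I do not expect a genuine obstacle here, as both parts are routine once the previous proposition is in hand; the only point requiring a little care is the justification that $(AU,U)_H$ vanishes (rather than merely being imaginary). One should either restrict to real-valued solutions, as the statement of the problem does, or note that even in the complex setting the real part of $E'(t)$ is zero, which suffices since $E(t)$ is real. A secondary technical remark: to differentiate under the integral sign and identify $\frac{d}{dt}\|U(t)\|_H^2$ with $2\,(U_t,U)_H$ one uses that $U$ is $C^1$ into $H$ and that the inner product is continuous bilinear, so no density or regularization argument beyond the $C^1$ regularity already provided is needed. This completes the plan.
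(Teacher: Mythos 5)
Your proposal is correct and follows essentially the same route as the paper: existence and uniqueness via the Cauchy--Lipschitz--Picard theorem (using that $A$ is bounded, hence the vector field is globally Lipschitz), and the conservation identity \eqref{ener1} by differentiating $\|U(t)\|_{H^1_0\times L^2}^2$ and using skew-adjointness of $A$, taking the real part to conclude the derivative vanishes. Your added remarks (Stone's theorem as an alternative, and the care about $(AU,U)_H$ being purely imaginary versus zero) are consistent with, and slightly more explicit than, the paper's one-line computation.
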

\begin{proof}
The existence and uniqueness follow from  Cauchy–Lipschitz–Picard theorem  \cite[pag 104]{brezis}. To see \eqref{ener1}, note that the $H^1_0(0,1)\times L^2(0,1)$ norm is conserved. Indeed,
\begin{align*}
\frac{1}{2}\frac{\partial}{\partial t}\|(u,v)\|_{H^1_0\times L^2}^2=\frac{1}{2}\frac{\partial}{\partial t}\left(\|u\|_{H^1_0}^2+\|v\|_{L^2}^2\right)=Re \left( (u,v), (-A)(u,v)^{T}\right)_{H^1_0\times L^2}=0.
\end{align*}
\end{proof}

We need to analyze the spectral decomposition of the operator $A$. The next proposition give us the behavior of the eigenvalues and eigenfunction of operator $A$.

\begin{proposition}
$A$ has a sequence of purely imaginary eigenvalues $(\lambda_n)_{n\in \Z^*}$,
\begin{equation}
\lambda_n=\sgn(n)\frac{n\pi i}{\sqrt{1+n^2\pi^2}}, \quad n\in \Z^*.
\end{equation}
Moreover, to each eigenvalue $\lambda_n$  corresponds an unique eigenfunction $U_n(x)=(u_n(x),v_n(x))^T,$ where 
\begin{equation}
u_n(x)=\frac{i\sqrt{2}}{2\sqrt{1+n^2\pi^2}}\sin(n\pi x), \quad \text{and} \quad v_n(x)= \frac{\sqrt{2}\sgn(n)}{2}\cos (n\pi x)  .
\end{equation}
 The family $(U_n)_{n\in \Z^*}$ forms an orthonormal basis in $H^1_0(0,1)\times L^2(0,1)$.
\end{proposition}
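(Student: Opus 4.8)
The plan is to compute the spectrum of $A$ by hand, reducing the eigenvalue equation to a one-dimensional Sturm--Liouville problem, and then to obtain the orthonormal-basis claim essentially for free from the spectral theorem for compact normal operators, using the previous proposition. Writing $U=(u,v)^T$, the equation $AU=\lambda U$ is the pair $-(I-\partial_x^2)^{-1}\partial_x v=\lambda u$ and $-\partial_x u=\lambda v$. If $\lambda=0$ one sees at once that $\partial_x u\equiv\partial_x v\equiv 0$, so $u\equiv 0$ and $v$ is constant, a stationary mode that is discarded once one works in the (dynamically invariant) zero-mean subspace for $v$. For $\lambda\ne 0$ the second equation gives $v=-\lambda^{-1}\partial_x u$; substituting into the first and applying $I-\partial_x^2$ gives $(1+\lambda^2)\partial_x^2 u=\lambda^2 u$, i.e.
\begin{equation*}
\partial_x^2 u=\frac{\lambda^2}{1+\lambda^2}\,u,\qquad u(0)=u(1)=0,
\end{equation*}
the Dirichlet conditions coming from $u\in H^1_0(0,1)$. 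One must also check that the recovered $v$ lies in $D(A)$: since $\partial_x v=-\lambda^{-1}\partial_x^2 u=-\lambda(1+\lambda^2)^{-1}u$ and $u$ vanishes at $x=0,1$, the Neumann conditions $\partial_x v(0)=\partial_x v(1)=0$ hold automatically.

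Next I would solve this scalar problem. With $\sigma:=\lambda^2/(1+\lambda^2)$, classical Sturm--Liouville theory shows that $u''=\sigma u$, $u(0)=u(1)=0$ has a nontrivial solution precisely when $\sigma=-n^2\pi^2$ for some $n\in\N$, the eigenspace being spanned by $\sin(n\pi x)$; solving $\lambda^2/(1+\lambda^2)=-n^2\pi^2$ gives $\lambda^2=-n^2\pi^2/(1+n^2\pi^2)<0$, so all eigenvalues are purely imaginary, $\lambda=\pm i\,n\pi/\sqrt{1+n^2\pi^2}$. Distributing the two sign choices over $n\in\Z^*$ produces the family $(\lambda_n)_{n\in\Z^*}$ of the statement, each eigenvalue simple. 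Writing $u_n=c_n\sin(n\pi x)$ and $v_n=-\lambda_n^{-1}\partial_x u_n=-c_n\,n\pi\,\lambda_n^{-1}\cos(n\pi x)$, the constant $c_n$ is then pinned down by computing $\|U_n\|^2_{H^1_0\times L^2}=\int_0^1(|\partial_x u_n|^2+|u_n|^2)\,dx+\int_0^1|v_n|^2\,dx$ with the help of $\int_0^1\sin^2(n\pi x)\,dx=\int_0^1\cos^2(n\pi x)\,dx=\tfrac12$, which yields the closed-form expressions for $u_n$ and $v_n$.

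For the basis property I would not argue directly. By the previous proposition $A$ is compact and skew-adjoint on $H^1_0(0,1)\times L^2(0,1)$, hence a compact normal operator, so the spectral theorem for such operators supplies an orthonormal basis of $H^1_0(0,1)\times L^2(0,1)$ consisting of eigenvectors of $A$. The computation above identifies all the eigenvalues and their (one-dimensional) eigenspaces, and eigenvectors attached to distinct eigenvalues of a normal operator are automatically orthogonal, so this basis is exactly the normalized family $(U_n)_{n\in\Z^*}$. I expect the only delicate point to be the bookkeeping in the explicit reduction: making sure the recovered $v$-component genuinely lies in $D(A)$, that no non-imaginary eigenvalue (nor an unwanted $\lambda=0$ eigenfunction) is overlooked, and that the indexing over $\Z^*$ and the normalization constants are chosen so that the stated formulas come out exactly; once compactness and skew-adjointness are invoked, completeness of the eigenfunction system costs nothing.
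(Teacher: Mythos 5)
Your proposal follows essentially the same route as the paper's proof: it reduces $AU=\lambda U$ to the scalar Dirichlet problem $(1+\lambda^2)u_{xx}=\lambda^2 u$, $u(0)=u(1)=0$, solves it to get the purely imaginary eigenvalues and $\sin(n\pi x)$-eigenfunctions, fixes the normalization via $\int_0^1\sin^2(n\pi x)\,dx=\int_0^1\cos^2(n\pi x)\,dx=\tfrac12$, and then obtains completeness from the compactness and skew-adjointness established in the preceding proposition. The only differences are cosmetic and, if anything, slightly more careful than the paper (quoting Sturm--Liouville theory instead of solving the ODE by exponentials, explicitly disposing of the $\lambda=0$ mode, and checking that the Neumann conditions on $v$ hold automatically), so the argument matches the paper's.
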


\begin{proof}
We are looking for $\lambda \in \C$ and $U=(u,v)^T \in H^1_0(0,1)\times L^2(0,1)$, such that $AU=\lambda U$, which is equivalent to 
\begin{equation}\label{A3}
\begin{cases}
(I-\partial_x^2)^{-1}\partial_x v=  \lambda u, \\ 
\partial_xu=\lambda v, \\
 u(0)=u(1)=\partial_x v(0)= \partial_x v(1)=0. \\
\end{cases}
\end{equation}
Thus, it yields
\begin{equation}\label{A2}
\begin{cases}
(1+\lambda^2)u_{xx}-  \lambda^2 u=0, \\ 
u(0)=u(1)=0. \\
\end{cases}
\end{equation}
Hence, $u(x)=c_1e^{\frac{|\lambda|}{\sqrt{1+\lambda^2}}x}+c_2e^{-\frac{|\lambda|}{\sqrt{1+\lambda^2}}x}.$
From the boundary condition, it follows that
\begin{equation}
\begin{cases}
c_1+c_2=0, \\ 
c_1e^{\frac{|\lambda|}{\sqrt{1+\lambda^2}}}+c_2e^{-\frac{|\lambda|}{\sqrt{1+\lambda^2}}}=0. \\
\end{cases}
\end{equation}
Then, $c_2=-c_1$ and $e^{\frac{2|\lambda|}{\sqrt{1+\lambda^2}}}=1$. Thus, we have a sequence $\{\lambda_n\}_{n\in \Z^*}$ such that 
\begin{equation}
\frac{|\lambda_n|}{\sqrt{1+\lambda_n^2}}=n\pi i, \quad n \in \Z^*.
\end{equation}
Therefore, 
\begin{equation}
\lambda_n= \sgn(n)\frac{n\pi i}{\sqrt{ 1+ n^2\pi^2}}, \quad n \in \Z^*.
\end{equation}
To each $\lambda_n$ associated to  \eqref{A2},  corresponds an eigenfunction $u_n$, where 
\begin{equation}\label{u1}
u_n(x)=2ic_1\sin(n\pi x).
\end{equation}
Coming back to system \eqref{A3}, we consider  $ v_{n}(x)=\frac{1}{\lambda_n} u_{n,x}(x)=\dfrac{2ic_1 n\pi }{\lambda_n}\cos(n\pi x)$. Hence, we have
\begin{equation}\label{v1}
v_n(x)=2c_1\sgn(n)\sqrt{ 1+ n^2\pi^2} \cos(n\pi x) \quad \text{and} \quad  v_{n,x}(x)=-2c_1n\pi\sgn(n)\sqrt{ 1+ n^2\pi^2} \sin(n\pi x),
\end{equation} 
implying that $v_{n,x}(0)=v_{n,x}(0)=0$. Thus, we have that $\{\lambda_n\}_{n\in \Z^*}$ forms the set of eigenvalues of $A$, whose eigenfunction are given by $U_n=(u_n,v_n)^T$, where $u_n$ and $v_n$ are given by \eqref{u1} and \eqref{v1}, respectively. Now, we choose the constant $c_1$ such that $\|(u_n,v_n)\|_{H^1_0\times L^2}=1$.  Indeed,  let us consider the norm of $U_n$ as $\|U_n\|_{H^1_0(0,1)\times L^2(0,1)}:=\|u_n\|_{H^1_0(0,1)}+\|v_n\|_{L^2(0,1)}.$ Note that 
\begin{align*}
\|\cos(n\pi x)\|_{L^2(0,1)}^2&=\int_0^1   \cos^2(2n\pi x) dx =  \left[\frac12x+\frac{\sin(4n\pi x)}{8n\pi} \right]_0^1 =\frac{1}{2} 
\end{align*}
and
\begin{align*}
\|\sin(n\pi x)\|_{H^1_0(0,1)}^2&=\|\sin(n\pi x)\|_{L^2(0,1)}^2+\|n\pi\cos(n\pi x)\|_{L^2(0,1)}^2 \\
&=  \left[\frac12x-\frac{\sin(4n\pi x)}{8n\pi} \right]_0^1+n^2\pi^2  \left[\frac12x+\frac{\sin(4n\pi x)}{8n\pi} \right]_0^1 =\frac{1+n^2\pi^2}{2}.
\end{align*}
Thus, we obtain that 
\begin{align*}
\|U_n\|_{H^1_0(0,1)\times L^2(0,1)}&=\|2ic_1\sin(n\pi x)\|_{H^1_0(0,1)}+\|2c_1\sgn(n)\sqrt{ 1+ n^2\pi^2} \cos(n\pi x)\|_{L^2(0,1)} \\
&=\frac{4}{\sqrt{2}}|c_1|\sqrt{ 1+ n^2\pi^2}
\end{align*}
Hence, choosing $c_1= \frac{\sqrt{2}}{ 4\sqrt{ 1+ n^2\pi^2} }$, we have that 
\begin{align*}
U_n(x)= \left( \frac{i\sqrt{2}}{2\sqrt{1+n^2\pi^2}}\sin(n\pi x),  \frac{\sqrt{2}\sgn(n)}{2}\cos (n\pi x)      \right) \quad \text{and} \quad \|U_n\|_{H^1_0\times L^2}=1.
\end{align*}
As $A$ is a compact and skew-adkjoint operator in  $H^1_0(0,1)\times H^1_0(0,1)$, the eigenfunction $\{U_n\}_{n\in \Z^*}$ forms an orthonormal basis in $H^1_0(0,1)\times H^1_0(0,1)$. 
\end{proof}

\begin{remark}
We have obtained that $\lim_{|n|\rightarrow \infty} |\mu_n| = 1$. This is due to the compactness of the operator $A$. Thus, the spectrum of $A$ admits a finite limit point and will have some very important consequences for the   controllability properties of the generalized SRLW equation. 
\end{remark}

Wiht the spectral analysis of $A$ and the asymptotic behavior of $\lambda_k$ in hands, we are able to obtain the explicit solution in the space $H^1_0(0,1)\times L^{2}(0,1)$. Indeed, if we consider a initial data $U_0 \in H^1_0(0,1)\times L^{2}(0,1)$, $U_0=\sum_{n\in \Z^*}a_nU_n$, the corresponding solution of equation \eqref{A1} is given by 
\begin{equation}
U(x,t)=\sum_{n\in \Z^*} a_nU_ne^{-i \lambda_n t}. 
\end{equation}

The next result is a directly consequence of the previous spectral analysis and ensures the  well-posedness of the system \eqref{hs1}.
\begin{proposition}\label{spectralsolution}
For any $U_0=(u_0,v_0) \in  H^1_0(0,1)\times L^{2}(0,1)$ initial data given by 
\begin{align*}
U_0=\sum_{n\in \Z^*}a_nU_n,
\end{align*}
for some constants $a_n$, where $\{U_n\}_{n\in \Z^*}$ is a orthonormal basis of $H^1_0(0,1)\times L^{2}(0,1)$ forms by the eigenfunctions of operator $A$. Then, there exists a unique solution of 
\begin{equation*}
\begin{cases}
U_t+AU=0, \\
U(0)=U_0,
\end{cases}
\end{equation*}
belongs to $C(\R,H^1_0(0,1)\times L^{2}(0,1))$ and it is given by 
\begin{equation}
U(x,t)=\sum_{n\in \Z^*} a_nU_ne^{-i \lambda_n t}
\end{equation}
where $U(x,t)=(u(x,t),v(x,t))^T$,
\begin{equation}
u(x,t)=\sum_{n\in \Z^*} \frac{i\sqrt{2}a_n}{2\sqrt{1+n^2\pi^2}} \sin(n\pi x)e^{-i \lambda_n t} \quad \text{and} \quad v(x,t)= \sum_{n\in \Z^*}  \frac{\sqrt{2}\sgn(n)a_n}{2}   \cos (n\pi x) e^{-i \lambda_n t},
\end{equation}
where $\lambda_n= \frac{ \sgn(n)n\pi }{\sqrt{ 1+ n^2\pi^2}},$ for $n \in \Z^*$.
\end{proposition}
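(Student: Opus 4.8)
The plan is to use the spectral decomposition of $A$ just obtained to reduce the abstract problem $U_t+AU=0$, $U(0)=U_0$, to a countable family of decoupled scalar ODEs. Since $A$ is compact it is in particular bounded on $X:=H^1_0(0,1)\times L^2(0,1)$, so the existence of a unique solution $U\in C^1(\R;X)$ is already guaranteed by the Cauchy--Lipschitz--Picard argument of the previous proposition; what remains is to identify that solution with the stated series and to read off the explicit formulas for $u$ and $v$.

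First I would expand $U(t)=\sum_{n\in\Z^*}a_n(t)U_n$ with $a_n(t)=(U(t),U_n)_X$, which is legitimate because $\{U_n\}_{n\in\Z^*}$ is an orthonormal basis of $X$. Pairing the equation with $U_n$ and using that $U_n$ is an eigenfunction of $A$ together with the skew-adjointness of $A$, one sees that each coordinate satisfies a scalar linear ODE whose solution is $a_n(t)=a_n e^{-i\lambda_n t}$, where $a_n=a_n(0)$ and $\lambda_n=\sgn(n)n\pi/\sqrt{1+n^2\pi^2}$. This produces the candidate series $U(x,t)=\sum_{n\in\Z^*}a_nU_n e^{-i\lambda_n t}$.

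Next I would verify that this series genuinely defines an element of $C(\R;X)$ solving the equation. Since $|e^{-i\lambda_n t}|=1$, Parseval gives $\|U(t)\|_X^2=\sum_{n\in\Z^*}|a_n|^2=\|U_0\|_X^2$ for every $t\in\R$, and the tail $\sum_{|n|>N}|a_n|^2$ tends to $0$ uniformly in $t$; hence the partial sums converge uniformly on $\R$ to a continuous $X$-valued map, so $U\in C(\R;X)$. For differentiability one uses that, by compactness of $A$, $\sup_n|\lambda_n|<\infty$ (indeed $|\lambda_n|\to1$, cf.\ the Remark above), so $\sum_{n\in\Z^*}|\lambda_n|^2|a_n|^2\le C\sum_{n\in\Z^*}|a_n|^2<\infty$; this licenses term-by-term differentiation and yields $U_t=\sum_{n\in\Z^*}(-i\lambda_n)a_nU_ne^{-i\lambda_n t}=-AU$, together with $U(0)=\sum_{n\in\Z^*}a_nU_n=U_0$. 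By the uniqueness already established, this is the solution. Substituting the explicit eigenfunctions $U_n=\bigl(\tfrac{i\sqrt2}{2\sqrt{1+n^2\pi^2}}\sin(n\pi x),\ \tfrac{\sqrt2\,\sgn(n)}{2}\cos(n\pi x)\bigr)$ into the series and reading off the two components produces exactly the stated expressions for $u(x,t)$ and $v(x,t)$.

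The argument is essentially routine; the only point deserving care is the term-by-term differentiation, which rests on the uniform boundedness of the eigenvalues $\lambda_n$ — itself a consequence of the compactness of $A$. Alternatively, one may bypass the coordinate computation by writing $U(t)=e^{-tA}U_0$ and expanding the norm-convergent exponential series against the orthonormal basis $\{U_n\}_{n\in\Z^*}$, which reproduces the same formula.
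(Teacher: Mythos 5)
Your proof is correct and takes essentially the same route the paper intends: the paper states this proposition without a separate proof, as a direct consequence of the spectral decomposition of $A$ and the preceding Cauchy--Lipschitz--Picard existence/uniqueness result, and your reduction to the scalar ODEs $a_n'(t)=-i\lambda_n a_n(t)$ together with the Parseval/uniform-convergence verification simply supplies those routine details. One small remark: for $\sup_n|\lambda_n|<\infty$ you only need boundedness of $A$ (or the explicit bound $|\lambda_n|=|n|\pi/\sqrt{1+n^2\pi^2}<1$), which is safer than invoking compactness, since compactness would in fact be at odds with the accumulation of the eigenvalues at $\pm i$.
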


\begin{remark}
From the cosine and sine functions, we see that there is no gain of regularity for the linear RSLW equation. Moreover, from the asymptotic behavior of
eingenvalues $\lambda_k$, we see that the fact that the position $(u,v)$ and the velocity $(u_t,v_t)$ have the same regularity. 
\end{remark}

\subsubsection{\textbf{The Nonhomogeneous System}}

Now, we are concerned  with the initial boundary value problem \eqref{linear1} - \eqref{linear2}.

\begin{theorem}\label{existence1}
Let $(u_0,v_0) \in H^1(0,1)\times L^2(0,1)$ and $h(t) \in H^1(0,T)$. Then, there exists a unique mild solution $(u,v)$ of problem \eqref{linear1}-\eqref{linear2} in $C([0,T];H^1(0,1)\times L^2(0,1))$.
\end{theorem}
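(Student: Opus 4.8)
The plan is to reduce \eqref{linear1}--\eqref{linear2} to an inhomogeneous abstract Cauchy problem for the skew-adjoint operator $A$ of \eqref{operatorA} with \emph{homogeneous} boundary data, and then apply Stone's theorem together with Duhamel's formula. First I would lift the boundary condition: set $\phi(x,t):=x\,h(t)$ and $w:=u-\phi$. Since the lift affects only the $u$-component, the Neumann conditions on $v$ are preserved while $w(0,t)=w(1,t)=0$, and because $\partial_x^2\phi\equiv 0$, substituting $u=w+\phi$ turns \eqref{linear1} into $\partial_t w-\partial_t\partial_x^2 w-\partial_x v=-x\,h'(t)$ together with $\partial_t v-\partial_x w=h(t)$; equivalently, with $W=(w,v)^T$,
\[
W_t+AW=F(t),\qquad F(t)=\Bigl(-(I-\partial_x^2)^{-1}\!\bigl(x\,h'(t)\bigr),\ h(t)\Bigr)^T ,
\]
where $(I-\partial_x^2)^{-1}$ carries Dirichlet boundary conditions (consistently with the definition of $A$, since $w(0,t)=w(1,t)=0$ forces $\partial_t w\in H^1_0$), and $W(0)=(u_0-x\,h(0),\,v_0)$. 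Note that $W(0)$ lies in $\HH:=H^1_0(0,1)\times L^2(0,1)$ once the natural compatibility relations $u_0(0)=0$ and $u_0(1)=h(0)$ — forced by \eqref{linear2} at $t=0$ — are assumed.

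Next I would run the semigroup argument. By the preceding proposition, $A$ is skew-adjoint on $\HH$ (skew-adjointness alone suffices here), so Stone's theorem gives that $-A$ generates a strongly continuous unitary group $\{S(t)\}_{t\in\R}$ on $\HH$. Writing $\zeta:=(I-\partial_x^2)^{-1}x\in H^2(0,1)\cap H^1_0(0,1)$, one has $(I-\partial_x^2)^{-1}(x\,h'(t))=h'(t)\,\zeta$, so $\|F(t)\|_{\HH}\le C\bigl(|h'(t)|+|h(t)|\bigr)$; since $h\in H^1(0,T)\hookrightarrow C([0,T])$, this yields $F\in L^2(0,T;\HH)\subset L^1(0,T;\HH)$ with $\|F\|_{L^1(0,T;\HH)}\le C\|h\|_{H^1(0,T)}$. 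The mild solution of the lifted problem is then \emph{defined} by Duhamel's formula $W(t)=S(t)W(0)+\int_0^t S(t-\tau)F(\tau)\,d\tau$, and standard semigroup theory gives $W\in C([0,T];\HH)$ with $\|W\|_{C([0,T];\HH)}\le\|W(0)\|_{\HH}+\|F\|_{L^1(0,T;\HH)}$. Undoing the lift, $u=w+x\,h(t)\in C([0,T];H^1(0,1))$ (because $x\,h(\cdot)\in C([0,T];H^1(0,1))$ by continuity of $h$) and $v\in C([0,T];L^2(0,1))$, which is the sought mild solution of \eqref{linear1}--\eqref{linear2}. Uniqueness is immediate: two mild solutions share the common lift $\phi$, so their difference solves $W_t+AW=0$, $W(0)=0$, hence vanishes, either by uniqueness of the Duhamel representation or, equivalently, by conservation of the $\HH$-norm established above.

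The one genuinely delicate point is keeping the source term $F$ inside the energy space: the troublesome contribution is produced by the $\partial_t\partial_x^2 u$ term under the lift, which is exactly why one assumes $h\in H^1(0,T)$ rather than merely $h\in L^2(0,T)$, and why the affine choice $\phi=x\,h(t)$ — which annihilates the otherwise-present term $\partial_t\partial_x^2\phi$ — is the convenient one, leaving only $x\,h'(t)\in L^2(0,1)$ to be regularized by the smoothing operator $(I-\partial_x^2)^{-1}$. If one prefers not to impose the compatibility conditions at $t=0$, the mild solution should instead be defined by transposition, exactly as in the periodic setting of Section~\ref{section2}, and the proof carries over \emph{mutatis mutandis}.
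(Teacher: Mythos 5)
Your proposal is correct and follows essentially the same route as the paper: lift the boundary datum via the affine function $x\,h(t)$, rewrite the lifted problem as the abstract Cauchy problem $W_t+AW=F$ with $F$ controlled by $\|h\|_{H^1(0,T)}$, solve it by the (unitary) group generated by the bounded skew-adjoint operator $A$ together with Duhamel's formula, and undo the lift. The only differences are cosmetic refinements on your side — making the compatibility conditions $u_0(0)=0$, $u_0(1)=h(0)$ explicit and writing the source term's second component without the smoothing operator — which tighten, but do not change, the paper's argument.
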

\begin{proof}
If $(u,v)$ is solution of \eqref{linear1}-\eqref{linear2}, then $\varphi(x,t):=u(x,t)+xh(t)$ and $\psi(x,t):=v(x,t)$ are solutions of system,
\begin{equation}\label{linear3}
\begin{cases}
\partial_{t}\varphi-\partial_t \partial_x^2\varphi-\partial_x\psi=xh'(t), & (x,t) \in (0,1)\times (0,T), \\ 
 \partial_{t}\psi-\partial_x\varphi=-h(t),  & (x,t) \in (0,1)\times (0,T), \\
\varphi(0,t)=0,  \quad \varphi(1,t)=0, & t \in (0,T), \\
\partial_x \psi(0,t)=0,  \quad \partial_x \psi(1,t)=0, & t \in (0,T) \\
\varphi(x,0)=u_0(x)-xh(0), \quad \psi(x,0):=v_0(x), & x \in (0,1).
\end{cases}
\end{equation}
Considering $W=(\varphi,\psi)^{T}$,  the system  \eqref{linear3} takes the form 
\begin{equation}\label{linear4}
\begin{cases}
W_t+AW= F, \\
W(0)=W_0
\end{cases}
\end{equation}
where $A$ is given by \eqref{operatorA} and 
\begin{equation}\label{linear5}
F=\begin{pmatrix}\left(I-\partial_x^2\right)^{-1}(xh'(t))\\   \\
\left(I-\partial_x^2\right)^{-1}(-h(t))
       \end{pmatrix}, \quad \text{and} \quad W_0(x)=\begin{pmatrix}   u_0(x)-xh(0)\\   \\
v_0(x)
       \end{pmatrix}
\end{equation}
The term $F$ belongs to $ L^2(0, T; [L^2(0,1)]^2)$. Thus, by using a classical result, we obtain a well-posedness result for \eqref{linear4}-\eqref{linear5} giving solutions in $C^1([0, T];L^2(0,1))$.
Going back to $(u,v)$ we obtain the desired result (see \cite[pag 9]{komornik} for a similar argument).
\end{proof}

\subsubsection{\textbf{Lack of Controllability}}

In this part, we study the exact controllability properties of the linearized system

\begin{equation}\label{linear}
\begin{cases}
\partial_{t}u-\partial_t\partial_x^2u-\partial_xv=0, & (x,t) \in (0,1)\times (0,T), \\ 
\partial_{t}v-\partial_xu=0,  & (x,t) \in (0,1)\times (0,T), \\
u(0,t)=0,  \quad u(1,t)=h(t), & t \in (0,T), \\
\partial_x v(0,t)=0,  \quad \partial_xv(1,t)=0, & t \in (0,T), \\
u(x,0)=u_0(x), 	\quad v(x,0)=v_0(x), & x \in (0,1),
\end{cases}
\end{equation}
where $h$ is the control belongs to $H^1(0,T)$, and  the initial data $(u_0,v_0) \in H^1(0,1)\times L^2(0,1)$. 

Let $(\varphi, \psi)$ the solution of the adjoint system associated to \eqref{linear} given by 
\begin{equation}\label{linear6}
\begin{cases}
\partial_{t}\varphi-\partial_t\partial_x^2\varphi-\partial_x\psi=0, & (x,t) \in (0,1)\times (0,T), \\ 
\partial_{t}\psi-\partial_x\varphi=0,  & (x,t) \in (0,1)\times (0,T), \\
\varphi(0,t)= \varphi(1,t)=0, & t \in (0,T), \\
\partial_x \psi(0,t)=\partial_x \psi (1,t)=0, & t \in (0,T), \\
\varphi(x,T)=\varphi_T(x), 	\quad \psi(x,T)=\psi_T(x), & x \in (0,1),
\end{cases}
\end{equation}
where $(\varphi_T,\psi_T)$ belongs to $H^2(0,1)\cap H_0^1(0,1)\times H^1_0(0,1)$.

The following lemma gives a relation between the null controllability of 	\eqref{linear} with a control problem into a moment problem.

\begin{lemma}\label{lem1}
The initial data $(u_0,v_0) \in H^1_0(0,1)\times L^2(0,1)$  is controllable to zero in time $T>0$, with controls $h(\cdot)$  in $H^1(0,T)$, if and only if 
\begin{equation}\label{conditioncontrol}
 \left( u_0(\cdot), \varphi(\cdot,0) \right)_{H^1_0(0,1)}+\left( v_0(\cdot), \varphi(\cdot,0) \right)_{L^2(0,1)} 
 =  -\int_0^T h(t) \left(  \partial_x   \partial_t \varphi (1,t) +\psi (1,t) \right) dt,
\end{equation}
for any solution $(\varphi,\psi)$ of \eqref{linear6} and initial data $(\varphi_T,\psi_T)$ belongs to $H_0^1(0,1)\times L^2(0,1).$
\end{lemma}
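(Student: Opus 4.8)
The plan is to run the standard duality (transposition) argument: pair a solution $(u,v)$ of \eqref{linear} against a solution $(\varphi,\psi)$ of the adjoint system \eqref{linear6} in the $H^1_0(0,1)\times L^2(0,1)$ inner product and integrate by parts in $x$ and $t$, so that the control $h$ survives only through the boundary term at $x=1$. I would first establish the identity for regular data — $(u_0,v_0)$ smooth and compatible with \eqref{linear}, $h\in C^\infty([0,T])$, and $(\varphi_T,\psi_T)\in\bigl(H^2(0,1)\cap H^1_0(0,1)\bigr)\times H^1_0(0,1)$, so that by Theorem \ref{existence1} and Proposition \ref{spectralsolution} (applied to the backward problem) both $(u,v)$ and $(\varphi,\psi)$ are classical and every trace below is meaningful — and then recover the stated generality by density.

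Concretely, I would set $\beta(t)=\bigl(u(\cdot,t),\varphi(\cdot,t)\bigr)_{H^1_0(0,1)}+\bigl(v(\cdot,t),\psi(\cdot,t)\bigr)_{L^2(0,1)}$ and differentiate. Differentiating $\int_0^1(u_x\varphi_x+u\varphi)\,dx$ in $t$ and integrating by parts in $x$, the endpoint terms involving $\varphi$ vanish because $\varphi(0,t)=\varphi(1,t)=0$, whereas the endpoint term involving $u$ contributes $h(t)\,\partial_x\partial_t\varphi(1,t)$ since $u(0,t)=0$ and $u(1,t)=h(t)$; invoking $\partial_t u-\partial_t\partial_x^2u=\partial_x v$ and $\partial_t\varphi-\partial_t\partial_x^2\varphi=\partial_x\psi$ this gives
\[
\frac{d}{dt}\bigl(u,\varphi\bigr)_{H^1_0}=\int_0^1 v_x\varphi\,dx+\int_0^1 u\psi_x\,dx+h(t)\,\partial_x\partial_t\varphi(1,t),
\]
and, similarly, $\tfrac{d}{dt}(v,\psi)_{L^2}=\int_0^1 u_x\psi\,dx+\int_0^1 v\varphi_x\,dx$ using $\partial_t v=\partial_x u$ and $\partial_t\psi=\partial_x\varphi$. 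Adding, the four interior integrals reorganize as $[v\varphi]_0^1+[u\psi]_0^1$, and the boundary conditions give $[v\varphi]_0^1=0$ and $[u\psi]_0^1=h(t)\psi(1,t)$, so that $\beta'(t)=h(t)\bigl(\partial_x\partial_t\varphi(1,t)+\psi(1,t)\bigr)$.

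Integrating over $(0,T)$ then yields
\[
\bigl(u(\cdot,T),\varphi_T\bigr)_{H^1_0}+\bigl(v(\cdot,T),\psi_T\bigr)_{L^2}-\bigl(u_0,\varphi(\cdot,0)\bigr)_{H^1_0}-\bigl(v_0,\psi(\cdot,0)\bigr)_{L^2}=\int_0^T h(t)\bigl(\partial_x\partial_t\varphi(1,t)+\psi(1,t)\bigr)\,dt.
\]
From here both implications are immediate: if $h$ steers $(u_0,v_0)$ to zero the terms at $t=T$ vanish and \eqref{conditioncontrol} follows; conversely, if \eqref{conditioncontrol} holds for every admissible $(\varphi_T,\psi_T)$, the above forces $\bigl(u(\cdot,T),\varphi_T\bigr)_{H^1_0}+\bigl(v(\cdot,T),\psi_T\bigr)_{L^2}=0$ for all such data, and, these forming a dense subset of $H^1_0(0,1)\times L^2(0,1)$, we conclude $(u(\cdot,T),v(\cdot,T))=(0,0)$, i.e. $(u_0,v_0)$ is controllable to zero by $h$.

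I expect the main obstacle to be not the algebra but the justification of the trace $\partial_x\partial_t\varphi(1,\cdot)$ and of the integrations by parts when $(u_0,v_0)\in H^1_0(0,1)\times L^2(0,1)$ and $h\in H^1(0,T)$ only; I would handle this by carrying out the computation for smooth data and then passing to the limit, using the continuity of the control-to-state map (Theorem \ref{existence1}), the conserved energy, and the eigenfunction expansion of Proposition \ref{spectralsolution} to control the relevant boundary traces. (Note that the computation produces $(v_0,\psi(\cdot,0))_{L^2(0,1)}$ for the second term on the left of \eqref{conditioncontrol}.)
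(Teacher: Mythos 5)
Your proposal is correct and follows essentially the same route as the paper: both derive the same duality identity
$\bigl(u(\cdot,T),\varphi_T\bigr)_{H^1_0}+\bigl(v(\cdot,T),\psi_T\bigr)_{L^2}-\bigl(u_0,\varphi(\cdot,0)\bigr)_{H^1_0}-\bigl(v_0,\psi(\cdot,0)\bigr)_{L^2}=\int_0^T h(t)\bigl(\partial_x\partial_t\varphi(1,t)+\psi(1,t)\bigr)\,dt$
by pairing the state system with the adjoint system and integrating by parts (the paper integrates the products over the space--time cylinder, you differentiate the pairing $\beta(t)$ in time, which is the same computation), and then conclude by testing against all admissible $(\varphi_T,\psi_T)$. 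Your closing remark is also accurate: the second term on the left of \eqref{conditioncontrol} should indeed read $\bigl(v_0,\psi(\cdot,0)\bigr)_{L^2(0,1)}$, the $\varphi$ there being a typo in the paper.
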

\begin{proof}
Multiplying the first and second equation of the system \eqref{linear} by $\varphi$ and $\psi$ solution of \eqref{linear6}, respectively and integrating  over $(0,1)\times (0,T)$, it follows that 
\begin{align*}
\int_0^T\int_0^1  \left(\partial_{t}u-\partial_t\partial_x^2u-\partial_xv\right) \varphi dxdt + \int_0^T\int_0^1\left(\partial_{t}v-\partial_xu\right)\psi dx dt  =0
\end{align*}
Thus, integrating by parts, we have
\begin{multline*}
0=-\int_0^T\int_0^1  u \left( \partial_t \varphi -\partial_x^2  \partial_t \varphi -\partial_x\psi \right)  dxdt - \int_0^T\int_0^1  v \left( \partial_{t}\psi- \partial_x \varphi \right) dxdt \\
+ \int_0^1 \left[u\varphi     - \partial_x^2u  \varphi   +   v \psi \right]_0^Tdx   -  \int_0^T \left[ v  \varphi    +  u \partial_x   \partial_t \varphi      -  \partial_xu   \partial_t \varphi + u \psi \right]_0^1 dt.
\end{multline*}
Thus,  we deduce that
\begin{equation*}
 \int_0^1 \left[u\varphi     - \partial_x^2u  \varphi   +   v \psi \right]_0^Tdx   =  \int_0^T \left[ v  \varphi    +  u \partial_x   \partial_t \varphi      -  \partial_xu   \partial_t \varphi + u \psi \right]_0^1 dt.
\end{equation*}
Then,
\begin{multline*}
 \int_0^1  \left( u(x,T) \varphi_T(x)     +\partial_xu(x,T) \partial_x  \varphi_T(x)   +   v(x,T) \psi_T(x) \right) dx - \left[ \partial_xu(x,T)  \varphi_T(x)\right]_0^1\\
 -  \int_0^1  \left( u_0(x) \varphi(x,0)     + \partial_xu_0(x)  \partial_x\varphi(x,0)   +   v_0(x) \psi(x,0) \right) dx  +\left[ \partial_xu_0(x)  \varphi(x,0)\right]_0^1      \\
 =  \int_0^T \left( h(t) \partial_x   \partial_t \varphi (1,t)     + h(t)\psi (1,t) \right) dt,
\end{multline*}
thus, it follows that 
\begin{multline*}
\left( u(\cdot,T), \varphi_T(\cdot) \right)_{H^1_0(0,1)}+\left( v(\cdot,T), \varphi_T(\cdot) \right)_{L^2(0,1)} - \left( u_0(\cdot), \varphi(\cdot,0) \right)_{H^1_0(0,1)}-\left( v_0(\cdot), \varphi(\cdot,0) \right)_{L^2(0,1)} \\  
 =  \int_0^T h(t) \left(  \partial_x   \partial_t \varphi (1,t)+\psi (1,t) \right) dt,
\end{multline*}
for all $(\varphi_T, \psi_T) \in H^1_0(0,1)\times L^2(0,1)$. Then, $(u_0,v_0) \in H^1_0(0,1)\times L^2(0,1)$  is controllable to zero in time $T>0$ if and only if \eqref{conditioncontrol} is satisfied. 
\end{proof}

\begin{lemma}\label{lem2}
Let us consider initial data $U_0:=(u_0,v_0)$  belongs to $H^1_0(0,1)\times L^2(0,1)$ such that 
\begin{align*}
U_0=\sum_{n\in \Z^*}a_nU_n.
\end{align*}
Then, $U_0$  is controllable to zero in time $T>0$, with controls $h(\cdot)$ in $H^1(0,T)$, if and only if 
\begin{equation}
   \int_0^T h(t) e^{-i \lambda_n t}dt=   \frac{(-1)^n \sqrt{2} n^2\pi^2 a_n}{2\sqrt{1+n^2\pi^2}(n\pi \lambda_n + \sgn(n)\sqrt{1+n^2\pi^2})}
\end{equation}
\end{lemma}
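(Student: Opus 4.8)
The plan is to combine the abstract controllability criterion of Lemma \ref{lem1} with the explicit spectral representation of the adjoint solution. First I would take the initial data $U_0=\sum_{n\in\Z^*}a_nU_n$ and test condition \eqref{conditioncontrol} against each eigenfunction separately, i.e.\ choose final data $(\varphi_T,\psi_T)=U_m=(u_m,v_m)^T$ for a fixed $m\in\Z^*$. Since $A$ is skew-adjoint with $AU_m=\lambda_m U_m$, the backward adjoint system \eqref{linear6} has the solution $(\varphi,\psi)(x,t)=U_m(x)e^{-i\lambda_m(t-T)}$; in particular $\varphi(x,0)=u_m(x)e^{i\lambda_m T}$ and $\psi(x,0)=v_m(x)e^{i\lambda_m T}$. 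Because $\{U_n\}$ is an orthonormal basis of $H^1_0(0,1)\times L^2(0,1)$, the left-hand side of \eqref{conditioncontrol} collapses, by orthonormality, to $a_m e^{i\lambda_m T}$ (up to keeping careful track of which inner product — $H^1_0$ on the first slot, $L^2$ on the second — and noting that together they give exactly $(U_0,U_m)_{H^1_0\times L^2}$).

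Second, I would compute the boundary term on the right-hand side of \eqref{conditioncontrol}. With $\varphi(x,t)=u_m(x)e^{-i\lambda_m(t-T)}$ we have $\partial_t\varphi(x,t)=-i\lambda_m u_m(x)e^{-i\lambda_m(t-T)}$, hence $\partial_x\partial_t\varphi(1,t)=-i\lambda_m u_m'(1)e^{-i\lambda_m(t-T)}$, and $\psi(1,t)=v_m(1)e^{-i\lambda_m(t-T)}$. Plugging the explicit eigenfunctions
$u_m(x)=\tfrac{i\sqrt2}{2\sqrt{1+m^2\pi^2}}\sin(m\pi x)$ and $v_m(x)=\tfrac{\sqrt2\,\sgn(m)}{2}\cos(m\pi x)$, one gets $u_m'(1)=\tfrac{i\sqrt2\,m\pi}{2\sqrt{1+m^2\pi^2}}\cos(m\pi)=\tfrac{i\sqrt2\,m\pi(-1)^m}{2\sqrt{1+m^2\pi^2}}$ and $v_m(1)=\tfrac{\sqrt2\,\sgn(m)(-1)^m}{2}$. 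Therefore $\partial_x\partial_t\varphi(1,t)+\psi(1,t)$ equals a constant (in $x$) times $e^{-i\lambda_m(t-T)}$, and the time integral becomes that constant times $e^{i\lambda_m T}\int_0^T h(t)e^{-i\lambda_m t}\,dt$. Equating the two sides of \eqref{conditioncontrol}, the common factor $e^{i\lambda_m T}$ cancels, leaving exactly a moment equation of the stated shape $\int_0^T h(t)e^{-i\lambda_m t}\,dt = c_m a_m$ for the appropriate constant $c_m$; simplifying $-[\,u_m'(1)(-i\lambda_m)+v_m(1)\,]^{-1}$ should reproduce the displayed coefficient $\tfrac{(-1)^n\sqrt2\,n^2\pi^2 a_n}{2\sqrt{1+n^2\pi^2}\,(n\pi\lambda_n+\sgn(n)\sqrt{1+n^2\pi^2})}$, after pulling out the factor $i$ and using $\sgn(m)^2=1$.

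Finally, for the converse I would argue that since $\{U_n\}$ spans the state space and the adjoint solution associated to an arbitrary $(\varphi_T,\psi_T)$ is a superposition $\sum_m b_m U_m e^{-i\lambda_m(t-T)}$, condition \eqref{conditioncontrol} holds for \emph{all} final data if and only if it holds for each $U_m$ — both sides of \eqref{conditioncontrol} are continuous linear functionals of $(\varphi_T,\psi_T)$, so checking on a basis suffices. Hence the family of moment equations is equivalent to null-controllability of $U_0$, which is Lemma \ref{lem1}. The main technical obstacle I anticipate is purely bookkeeping: matching signs and the $i$'s coming from $u_m$ being purely imaginary, from $\lambda_m$ being purely imaginary, and from the $\partial_t$ derivative, and confirming that the two inner products in \eqref{conditioncontrol} combine to give the clean pairing $a_m e^{i\lambda_m T}$ rather than something with an extra $(1+m^2\pi^2)$ weight — i.e.\ verifying the normalization of $U_m$ in the $H^1_0\times L^2$ norm is consistent with the boundary computation. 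No hard analysis is involved; it is an exact computation on a single Fourier mode.
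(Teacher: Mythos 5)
Your strategy is essentially the paper's: reduce to the criterion of Lemma \ref{lem1}, represent the adjoint solution through the eigenfunction expansion, and compute the interior pairing and the boundary trace to arrive at a family of moment equations. Testing against a single eigenfunction $U_m$ instead of a general superposition $\sum_{n}b_nU_n$ (the paper then uses arbitrariness of the coefficients $b_n$ in $\ell^2$) is an equivalent way to organize the same computation.

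The one point where your outline goes astray is the claim that the left-hand side of \eqref{conditioncontrol} collapses by orthonormality to $a_m e^{i\lambda_m T}$. The pairing in Lemma \ref{lem1} arises from the integration by parts and is bilinear (no complex conjugation), and the $H^1_0$-slot carries the purely imaginary amplitude of $u_m$, so that component enters with a factor coming from $i^2=-1$; combined with the $\sqrt{1+m^2\pi^2}$ normalization of $U_m$, the paper's computation gives $\frac{m^2\pi^2}{2(1+m^2\pi^2)}\,a_m e^{i\lambda_m T}$ rather than $a_m e^{i\lambda_m T}$. This weight is precisely what produces the factor $n^2\pi^2$ in the numerator of the stated moment identity; with the ``clean'' collapse you anticipate, the resulting constant would instead be proportional to $a_n\sqrt{1+n^2\pi^2}$, i.e.\ you would not land on the formula in the lemma. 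You did flag this normalization as the point to verify, but the alternative you lean toward is the incorrect one. Once the bilinear pairing and the actual normalization are used, your computation of the boundary term (via $u_m'(1)$, $v_m(1)$ and the factor $e^{i\lambda_m(T-t)}$) matches the paper's, and the stated identity follows.
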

\begin{proof}
Let us put $V_T:=(\varphi_T, \psi_T) \in H^1_0(0,1)\times L^2(0,1)$ such that $\quad V_T=\sum_{n\in \Z^*}b_nU_n$ and we use \eqref{conditioncontrol}. By using the spectral properties of the differential operator $A$  and the Proposition \ref{spectralsolution}, we have that 
\begin{align*}
U(x,t)&:=(u(x,t),v(x,t))=\sum_{n\in \Z^*} a_nU_ne^{-i \lambda_n t},\\
V(x,t)&:=(\varphi(x,t),\psi(x,t))=\sum_{n\in \Z^*} b_nU_ne^{i \lambda_n (T-t)}
\end{align*}
such that 
\begin{equation}
u(x,t)=\sum_{n\in \Z^*} \frac{i\sqrt{2}}{2\sqrt{1+n^2\pi^2}}a_n\sin(n\pi x)e^{-i \lambda_n t },  \quad v(x,t)= \sum_{n\in \Z^*}  \frac{\sqrt{2}\sgn(n)}{2}a_n   \cos (n\pi x) e^{-i \lambda_n t},
\end{equation}
and
\begin{equation}
\varphi(x,t)=\sum_{n\in \Z^*} \frac{i\sqrt{2}}{2\sqrt{1+n^2\pi^2}}b_n\sin(n\pi x)e^{i \lambda_n(T- t)},  \quad \psi(x,t)= \sum_{n\in \Z^*}  \frac{\sqrt{2}\sgn(n)}{2}b_n   \cos (n\pi x) e^{i \lambda_n (T-t)},
\end{equation}
where
$\lambda_n= \frac{\sgn(n)n\pi }{\sqrt{ 1+ n^2\pi^2}},$ for $n \in \Z^*$. It follows that 
\begin{align*}
 \left( u_0(\cdot), \varphi(\cdot,0) \right)_{H^1_0(0,1)}+\left( v_0(\cdot), \varphi(\cdot,0) \right)_{L^2(0,1)} 
& = \sum_{n\in \Z^*} \frac{-1}{2(1+n^2\pi^2)}a_nb_n e^{i \lambda_n T}+\sum_{n\in \Z^*} \frac{1}{2}a_nb_n e^{i \lambda_n T} \\
& = \sum_{n\in \Z^*} \frac{n^2\pi^2 a_nb_n e^{i \lambda_n T}}{2(1+n^2\pi^2)}
\end{align*}
and
\begin{multline*}
  \int_0^T h(t) \left(  \partial_x   \partial_t \varphi (1,t) +\psi (1,t) \right) dt \\
  =   \int_0^T h(t) \left[ \sum_{n\in \Z^*} \frac{\sqrt{2} n\pi \lambda_n}{2\sqrt{1+n^2\pi^2}}b_n\cos(n\pi )e^{i \lambda_n(T- t)}+\sum_{n\in \Z^*}  \frac{\sqrt{2}\sgn(n)}{2}b_n   \cos (n\pi ) e^{i \lambda_n ](T-t)}   \right]dt \\
  =  \frac{\sqrt{2}}{2} \sum_{n\in \Z^*} b_ne^{i \lambda_n T} \int_0^T h(t) \left[  \frac{ n\pi\lambda_n}{\sqrt{1+n^2\pi^2}}+ \sgn(n)   \right]\cos(n\pi )e^{-i \lambda_n t}dt.
\end{multline*}
From \eqref{conditioncontrol} we deduce that
\begin{equation}
\sum_{n\in \Z^*}b_ne^{i \lambda_n T} \left( \frac{n^2\pi^2 a_n}{1+n^2\pi^2}+\sqrt{2}   \int_0^T h(t) \left[  \frac{ n\pi\lambda_n}{\sqrt{1+n^2\pi^2}}+ \sgn(n)   \right]\cos(n\pi )e^{-i \lambda_n t}dt\right)=0,
\end{equation}
for any $\{b_n\}_{n\neq 0}$ in $l^2$. 
\end{proof}

In the next result, we will establish that the system \eqref{linear} is not spectrally controllable.
This means that no finite linear nontrivial combination of eigenvectors of the differential operator $A$ can be driven
to zero in finite time by using a control $h \in  H^1(0, T)$. Thus, from above lemmas the following negative result 
can be easily proved.

\subsection*{\bf Proof of Theorem \ref{noncontrollable}}
With the above results in hands, it is sufficently prove the following claim
\begin{claim}
No eigenfunction of the differential operator $A$ can be driven to zero in finite time.
\end{claim}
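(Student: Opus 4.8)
The plan is to argue by contradiction, converting the controllability of a single eigenmode into a rigidity statement for an entire function of exponential type. First I would suppose that some eigenfunction $U_m$, $m \in \Z^{*}$, could be steered to zero in a time $T > 0$ by a control $h \in H^1(0,T)$. Specializing Lemma \ref{lem2} to the data $U_0 = U_m$, that is, to $a_m = 1$ and $a_n = 0$ for $n \ne m$, the control $h$ must then satisfy the moment identities
\begin{equation*}
\int_0^T h(t)\, e^{-i\lambda_n t}\, dt = 0 \quad (n \in \Z^{*},\ n \ne m), \qquad \int_0^T h(t)\, e^{-i\lambda_m t}\, dt = c_m,
\end{equation*}
where $c_m = \dfrac{(-1)^m \sqrt{2}\, m^2\pi^2}{2\sqrt{1+m^2\pi^2}\,\bigl(m\pi\lambda_m + \sgn(m)\sqrt{1+m^2\pi^2}\bigr)}$. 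The first point I would record is that $c_m \ne 0$: the numerator does not vanish since $m \ne 0$, and the denominator does not vanish since $\lambda_m$ is purely imaginary, so $m\pi\lambda_m + \sgn(m)\sqrt{1+m^2\pi^2}$ has nonzero real part $\sgn(m)\sqrt{1+m^2\pi^2}$. Hence the last identity prescribes a genuinely nonzero value.

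Next I would introduce the entire function $H(z) := \int_0^T h(t)\, e^{-izt}\, dt$, which is well defined and holomorphic on all of $\C$ because $h \in H^1(0,T) \subset L^1(0,T)$ (differentiate under the integral sign). In this notation the moment identities read $H(\lambda_n) = 0$ for every $n \ne m$ and $H(\lambda_m) = c_m$. Invoking the spectral Proposition for $A$ together with the Remark following it, the numbers $\lambda_n$ are purely imaginary with $|\lambda_n| \to 1$ as $|n| \to \infty$; in particular $\{\lambda_n\}_{n \in \Z^{*}}$ is an infinite bounded subset of $\C$ and therefore has an accumulation point in $\C$. Since $H$ vanishes on $\{\lambda_n : n \ne m\}$, which keeps that accumulation point, the identity theorem for holomorphic functions forces $H \equiv 0$; in particular $H(\lambda_m) = 0$, contradicting $H(\lambda_m) = c_m \ne 0$. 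This contradiction establishes the claim, and with it Theorem \ref{noncontrollable}.

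The main obstacle, such as it is, is really bookkeeping rather than a conceptual difficulty: carefully specializing Lemma \ref{lem2} so that the nonzero constant $c_m$ lands on the $n=m$ equation while every other equation is homogeneous, and checking that the accumulation point of $\{\lambda_n\}$ is a finite point of $\C$ so that the identity theorem genuinely applies. Both are immediate consequences of the spectral analysis already carried out. I would also note that the same entire-function argument applies verbatim to any nontrivial finite linear combination of eigenvectors: its support misses cofinitely many indices $n$, so the associated $H$ still vanishes on a set possessing an accumulation point, again forcing $H \equiv 0$ and contradicting the non-vanishing of the corresponding constants, so that nothing is lost by reducing to the single-eigenmode claim.
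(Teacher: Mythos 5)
Your proposal is correct and takes essentially the same route as the paper: the paper first reflects the control onto $\left(-\tfrac{T}{2},\tfrac{T}{2}\right)$ and rescales it into a biorthogonal family so as to invoke Theorem \ref{notpossible}, whose proof is precisely your argument (an entire function of Paley--Wiener type vanishing on the set $\{\lambda_n\}$, which has a finite accumulation point, must vanish identically, contradicting the nonzero value at $\lambda_m$). You merely apply that identity-theorem argument directly to $H(z)=\int_0^T h(t)e^{-izt}\,dt$, skipping the intermediate biorthogonal reformulation, so the mathematical content coincides with the paper's proof.
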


In fact, from Lemma \ref{lem2}, the null controllability of $U_m=(u_m,v_m)$  is equivalent to find $h \in H^1(0,T)$ such that 
\begin{equation}\label{eqq2}
   \int_0^T h(t) e^{-i \lambda_n t}dt=  
   \begin{cases}
    \dfrac{(-1)^n \sqrt{2} n^2\pi^2 a_n}{2\sqrt{1+n^2\pi^2}(n\pi \lambda_n + \sgn(n)\sqrt{1+n^2\pi^2})}  & n=m, \\ 
    \\
    0 & \forall n \in \Z^*, \,\, n \neq m.
    \end{cases}
\end{equation}
Suppose that there exists a function $h(\cdot)$ satisfying \eqref{eqq2}. Consider the function $f(\cdot) \in H^1 \left(-\frac{T}{2}, \frac{T}{2}\right)$ given by 
\begin{equation}
\text{$f(t)=h \left( \frac{T}{2}-t \right) $ almost everywhere in $\left(-\frac{T}{2}, \frac{T}{2}\right).$}
\end{equation}
Then, it yields 
\begin{multline*}
 \int_{-\frac{T}{2}}^{\frac{T}{2}} f(t) e^{i \lambda_n t}dt= e^{\frac{iT}{2}(\lambda_n- \lambda_m) } \int_{0}^{T} h(t) e^{-i \lambda_n t}dt \\
 =  
   \begin{cases}
    \dfrac{(-1)^n \sqrt{2} n^2\pi^2   }{2\sqrt{1+n^2\pi^2}(n\pi \lambda_n + \sgn(n)\sqrt{1+n^2\pi^2})}  & n=m, \\ 
    \\
    0 & \forall n \in \Z^*, \,\, n \neq m.
    \end{cases}
\end{multline*}
Thus, considering $f_m(t)=c_mf(t)$, where 
\begin{align*}
c_m(t)=\sqrt{2}(-1)^m   \left( \lambda_m   \sqrt{1+ \frac{1}{m^2\pi^2}} + \sgn(m) \left(   1+ \frac{1}{m^2\pi^2} \right)\right) , 
\end{align*}
it follows that 
\begin{equation*}
 \int_{-\frac{T}{2}}^{\frac{T}{2}} f_m(t) e^{i \lambda_n t}dt=
   \begin{cases}
    1 & n=m, \\ 
    \\
    0 & \forall n \in \Z^*, \,\, n \neq m.
    \end{cases}
\end{equation*}
It is a contradiction. Indeed, one can see that is not possible due the  following  Theorem \ref{notpossible}  which shows  a negative property related with the existence of a biorthogonal family. 

$\hfill\square$

\begin{theorem}\label{notpossible}
Let $T>0$ and $m\in \Z^*$. There is not function $f_m \in L^2(-T,T)$ such that 
\begin{equation}\label{nopossible1}
\int_{-T}^T f_m(t)e^{i\lambda_n t}dt = 
\begin{cases}
1 & \text{if $n= m$, } \\
0 & \text{if $n \in \Z^*, n\neq m$,}
\end{cases}
\end{equation}
where $\{i\lambda_n\}$, $n\in \Z^*$ are the eigenvalues of the differential operator $A$. 
\end{theorem}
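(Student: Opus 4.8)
\emph{Proof proposal.} The plan is to argue by contradiction, turning the biorthogonality relations \eqref{nopossible1} into an interpolation condition for an entire function and then invoking the accumulation of the eigenvalues $\lambda_n$ at a finite point. Suppose, for the sake of contradiction, that such an $f_m \in L^2(-T,T)$ exists. Since the interval is bounded we have $f_m \in L^1(-T,T)$, so one may define
\[
F(z) := \int_{-T}^{T} f_m(t)\, e^{izt}\, dt, \qquad z \in \C .
\]
Differentiation under the integral sign (legitimate because, on each compact subset of $\C$, the integrand and its $z$-derivative are dominated by $C\,|f_m(t)| \in L^1(-T,T)$) shows that $F$ is entire. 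By \eqref{nopossible1}, $F(\lambda_n) = \delta_{nm}$ for all $n \in \Z^*$; in particular $F$ vanishes at every $\lambda_n$ with $n \neq m$, while $F(\lambda_m) = 1$.

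Next I would exploit the asymptotics of the spectrum established above. For $n \geq 1$ one has $\lambda_n = n\pi/\sqrt{1+n^2\pi^2}$, a strictly increasing sequence with $\lambda_n \to 1$; similarly $\lambda_n \to -1$ as $n \to -\infty$. Hence the $\lambda_n$ are pairwise distinct, and the set $\{\lambda_n : n \geq 1,\ n \neq m\}$ is an infinite subset of $\R$ with accumulation point $1 \in \C$. Thus $F$ is an entire function vanishing on a set having an accumulation point in $\C$; by the identity theorem $F \equiv 0$ on $\C$, which contradicts $F(\lambda_m) = 1$. This contradiction establishes the theorem.

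The argument is short, and the only points needing care are the verification that $F$ is genuinely entire (which is immediate from $f_m \in L^1(-T,T)$) and the observation that the eigenvalues really do accumulate at a finite point rather than escaping to infinity — this is precisely the structural feature of the operator $A$ recorded in the Remark after the spectral proposition, namely that $|\lambda_n|\to 1$ as $|n|\to\infty$. One could phrase the same proof in Paley--Wiener language: $F$ is the Fourier--Laplace transform of an $L^2$ function supported in $[-T,T]$, hence an entire function of exponential type at most $T$ which is square-integrable on $\R$, and such a function cannot have zeros clustering at a finite point unless it vanishes identically. Either way, the accumulation of $\{\lambda_n\}$ is exactly what makes a biorthogonal family impossible, which is the conceptual reason behind the lack of spectral controllability asserted in Theorem~\ref{noncontrollable}.
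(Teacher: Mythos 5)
Your proposal is correct and follows essentially the same route as the paper: define the entire function $F(z)=\int_{-T}^{T}f_m(t)e^{izt}\,dt$ (Paley--Wiener), observe that it vanishes on the set $\{\lambda_n : n\neq m\}$, which accumulates at a finite point since $|\lambda_n|\to 1$, and conclude $F\equiv 0$ by the identity theorem, contradicting $F(\lambda_m)=1$. No substantive difference from the paper's argument.
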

\begin{proof}
Suppose that there exists a function $f_m$ belongs to $L^2(-T,T)$ for some $m \in \Z^*$,  such that
\eqref{nopossible1} is satisfied. Thus, let us define $F_m :\C \rightarrow \C$ by
\begin{align*}
F_m(z):=\int_{-T}^T f_m(t)e^{iz t}dt
\end{align*}
By using the Paley–Wiener theorem, it follows that  $F_m$ is an entire function and from \eqref{nopossible1}, it yields
\begin{align}\label{nopossible2}
F_m(\lambda_m)=1.
\end{align}
Since ·$\lambda_n=\frac{|n|\pi }{\sqrt{1+n^2\pi^2 }}$, we have that  $\lim_{|n|\rightarrow \infty }\lambda_n=1$. Then, $F$ is zero on a set with a finite accumulation  point. Therefore $F = 0$ which contradicts \eqref{nopossible2}. Hence, there is no function $f_m \in L^2(-T,T)$ such that \eqref{nopossible1}.
\end{proof}

\subsection{Approximated Controllability}
In spite of the lack of exact controllability from the boundary of system, we will prove that system \eqref{linear} is approximately controllable, i,e the set of reachable states 
\begin{align}\label{finalstate}
R(T,(u_0,v_0))=\left\lbrace (u(T,x),v(T,x)): h \in H^1(0,T)\right\rbrace
\end{align}
is dense in $H^{1}_0(0,1) \times L^2(0,1)$ for any $(u_0,v_0) \in H^{1}_0(0,1) \times L^2(0,1)$ and $T>0.$  This property is equivalent to a unique continuation property for the adjoint system associated.  The main idea is to use the series expansion of the solution in terms of the eigenvectors of the operator in order to reduce the problem to a unique continuation problem (of the eigenvectors). 
 
 \subsection*{\bf Proof of Theorem \ref{approximated}}
 Due to the linearity of the system under consideration, it is sufficient to prove the result for any $T > 0$ and $(u_0,v_0)=(0,0)$. Thus, we are going to prove that the set \eqref{finalstate} is dense in $H^1_0(0,1)\times L^2(0,1).$
 
 Let $(u,v) \in C([0,T];H^1(0,1)\times L^2(0,1))$ the corresponding solution of the system \eqref{linear} given by Theorem 
 \ref{existence1} and $(\varphi,\psi)$ solution of the adjoint system \eqref{linear6}. Then, it follows
that
\begin{equation}\label{final1}
 \left( u(T,\cdot), \varphi_T(\cdot) \right)_{H^1_0(0,1)}+\left( v(T,\cdot), \varphi_T(\cdot) \right)_{L^2(0,1)} 
 =  \int_0^T h(t) \left(  \partial_x   \partial_t \varphi (1,t) +\psi (1,t) \right) dt.
\end{equation}
Suppose that $R(T,(u_0,v_0))$ is not dense in $H^1_0(0,1)\times L^2(0,1).$ In this case, there exists $(\varphi_T,\psi_T) \neq (0,0)$ belongs to $ H^1_0(0,1)\times L^2(0,1)$ such that 
\begin{align*}
\left( u(T,\cdot), \varphi_T(\cdot) \right)_{H^1_0(0,1)}+\left( v(T,\cdot), \varphi_T(\cdot) \right)_{L^2(0,1)} =0, \quad \forall h \in H^1(0,T).
\end{align*}
Consequently, from \eqref{final1}, we obtain
\begin{equation}\label{final2}
 \int_0^T h(t) \left(  \partial_x   \partial_t \varphi (1,t) +\psi (1,t) \right) dt=0,  \quad h \in H^1(0,T).
\end{equation}
Thus, 
\begin{align*}
\partial_x   \partial_t \varphi (1,t) +\psi (1,t)=0,  \quad \forall t \in [0,T].
\end{align*}
On the other hand, since $A$ is a skew-adjoint operator in $H^1_0(0,1)\times L^2(0,1)$, it has a
sequence of eigenvalues $\{\lambda_n\}_{n\in \Z^*} \subset  i\R$ and the corresponding eigenfunctions $\{U_n\}_{n\in \Z^*}$ form a orthonormal basis of ($H^1_0(0,1)\times L^2(0,1)$. Then, if $V_T:=(\varphi_T, \psi_T)$ belongs to $H^1_0(0,1)\times L^2(0,1)$, we have
\begin{align*}
V_T=\sum_{n\in \Z^*}b_nU_n 
\end{align*}
and the corresponding solution $(\varphi, \psi)$ of the adjoint system can be written as
\begin{align*}
V(x,t)&:=(\varphi(x,t),\psi(x,t))=\sum_{n\in \Z^*} b_nU_n(x)e^{i \lambda_n (T-t)}
\end{align*}
thus, it follows that 
\begin{align*}
\varphi(x,t) &=\sum_{n\in \Z^*} b_nU_n^1(x)e^{i \lambda_n (T-t)} = \frac{i\sqrt{2}}{2}\sum_{n\in \Z^*} b_n  \frac{\sin(n\pi x)  e^{i \lambda_n (T-t)}}{\sqrt{1+n^2\pi^2}}  \\
\psi(x,t)&=\sum_{n\in \Z^*} b_nU_n^2(x)e^{i \lambda_n (T-t)}= \frac{\sqrt{2}}{2}\sum_{n\in \Z^*} b_n \sgn(n)\cos (n\pi x)   e^{i \lambda_n (T-t)}
\end{align*}
Then, we obtain that 
\begin{align*}
\partial_x   \partial_t \varphi(1,t) &  = \frac{\sqrt{2}}{2}\sum_{n\in \Z^*} b_n  \frac{n \pi \lambda_n (-1)^{|n|}  e^{i \lambda_n (T-t)}}{\sqrt{1+n^2\pi^2}}  
\end{align*}
and 
\begin{align*}
\psi(1,t)&=\frac{\sqrt{2}}{2}\sum_{n\in \Z^*} b_n \sgn(n)(-1)^{|n|}    e^{i \lambda_n (T-t)}
\end{align*}
Thus, it yields, 
\begin{align*}
0=\partial_x   \partial_t \varphi(1,t)+ \psi(1,t) &=\frac{\sqrt{2}}{2}\sum_{n\in \Z^*} (-1)^{|n|}  b_n \left(  \frac{n \pi \lambda_n  }{\sqrt{1+n^2\pi^2}}  + \sgn(n) \right)    e^{i \lambda_n (T-t)}.
\end{align*}
Since $\varphi$ and $\psi$ are an analytic function (see Theorem 2.1), we can integrate the identity above over $(-S, S)$, for any $S > 0$. Then, for each $m \in \Z^*$, we deduce that
\begin{align*}
0=\lim_{S\rightarrow \infty} \frac{1}{S}\int_{-S}^S \partial_x   \partial_t \varphi(1,t)+ \psi(1,t) e^{-\lambda_m t}dt
\end{align*}
Thus, 
\begin{align*}
\lim_{S\rightarrow \infty}\frac{1}{S}\int_{-S}^S \left[\sum_{n\in \Z^*} (-1)^{|n|}  b_n \left(  \frac{n \pi \lambda_n  }{\sqrt{1+n^2\pi^2}}  + \sgn(n) \right)    e^{i \lambda_n (T-t)} \right] e^{-\lambda_m t}dt=0.
\end{align*}
Hence, it follows that $b_m=0$ for all $m\in \Z^*$ and 
therefore $V_T:=(\varphi_T, \psi_T)=(0,0)$ , which represents a contradiction. Then, $R(T,(u_0,v_0))$
is dense in $H^{1}_0(0,1) \times L^2(0,1)$. 
 $\hfill\square$
 
\subsection*{\bf Acknowledgments} 
The authors thank the anonymous referees for their very valuable comments and suggestions.
The first author was supported by Mathamsud-21-Math-03  and the 100.000 Strong in the Americas Innovation Fund. The second author was partially supported by Mathamsud 21-MATH-03.  This work was carried out during visits of the authors to the Universidad Nacional de Colombia sede Manizales. The authors would like to thank the universities for its hospitality.


\begin{thebibliography}{8}




\bibitem{albert}
 J. Albert, On the decay of solutions of the generalized Benjamin-Bona-Mahony equations, Journal of
Mathematical Analysis and Applications, vol. 141, no. 2, pp. 527–537, 1989.


\bibitem{CB} {C. Banquet, The Symmetric Regularized-Long-Wave Equation: Well-posedness and Nonlinear
Stability, \em Physica D,} {\bf 241} (2012), 125 - 133.

\bibitem{bbm}
T. B. Benjamin, J. L. Bona and J. J. Mahony, Model equations for long waves in nonlinear dispersive systems, Phil. Trans. Royal Soc. London, A 227,  47-78, (1972).

\bibitem{B-T} {J. L. Bona and N. Tzvetkov, Sharp Well-posedness Results for the BBM equation, \em Discrete and Continuous Dynamical System,} {\bf 23}
(2009), 1241 - 1252.


\bibitem{castro}
C. Castro,. Exact controllability of the 1-d wave equation from a moving interior point. ESAIM: Control, Optimisation and Calculus of variations, 19(1), 301-316. (2013)

\bibitem{BPS1983}
J. L. Bona, W. G. Pritchard and L. R. Scott, A comparison of solutions of two model equations for long waves, Fluid Dynamics in Astrophysics and Geophysics, Norman R. Lebovitz, ed., Lectures in Applied Mathematics, 20, 235-267, (1983).

\bibitem{BPS1981}
J. L. Bona, W. G. Pritchard and L. R. Scott, An evaluation of a model equation for water waves, Phil. Trans. Royal Soc. London, A 302,  457-510, (1981).


\bibitem{brezis}
H. Brezis, Analyse Fonctionnelles: théorie et applications, Masson, Paris, 1987.

\bibitem{cerparivas2018}
E. Cerpa, I. Rivas, On the controllability of the Boussinesq equation in low regularity, Journal of Evolution Equations, to appear

\bibitem{cerpa2018}
E. Cerpa and E. Crépeau. On the controllability of the improved Boussinesq equation. SIAM J. Control Optim.,  vol. 56, no 4, p. 3035-3049, 2018. 


\bibitem{cerpa2003}
E. Crépeau. Exact controllability of the Boussinesq equation on a bounded domain. Differential Integral Equations, 16(3), 303-326, 2003


\bibitem{Chen1} 
{L. Chen, Stability and instability of solitary waves for generalized
symmetric-regularized-long-wave equation, \em Physica D,} {\bf 118}
(1998), 53 - 68.

\bibitem{C-L} 
{Y. Chen and B. Li, Travelling wave solutions for generalized symmetric regularized-long-wave
equations with high-order nonlinear terms, \em Chinese Phys.}, {\bf
13} (2004), 302 - 306.

\bibitem{F} 
{X. Fei, Application of Exp-function method to Symmetric Regularized Long Wave (SRLW) equation,
\em Physics Lett. A,} {\bf 372} (2008), 252 - 257.

\bibitem{komornik}
V. Komornik and P. Loreti, Fourier series in control theory, Springer Monographs, in Mathematics, Springer-Verlag, New York, 2005.

\bibitem{leu1989}
 G. Leugering, E. J. P. G. Schmidt, Boundary control of a vibrating plate with internal damping, Mathematical
Methods in the Applied Sciences, 11 (1989), 573–586.


\bibitem{lions}
J.-L. Lions. Contrôlabilité exacte, perturbations et stabilisation de systèmes distribués. Tome 2,
volume 9 of Recherches en Mathématiques Appliquées [Research in Applied Mathematics]. Masson,
Paris, 1988. Perturbations. [Perturbations]

\bibitem{lions_3}
 J.-L. Lions, Pointwise control for distributed systems, in Control and estimation in distributed parameter
systems, edited by H. T. Banks, SIAM, 1992.


\bibitem{lions_1}
J.-L. Lions and E. Magenes, Problèmes aux Limites non Homogènes et Applications, Vol. 1,
Dunod, Paris, 1968.

\bibitem{lions_2}
 J.-L. Lions and E. Magenes, Problèmes aux Limites non Homogènes et Applications, Vol. 2,
Dunod, Paris, 1968.




\bibitem{micu}
S. Micu, On the controllability of the linearized Benjamin–Bona–Mahony equation, SIAM J. Control Optim. 39 (6) 1677–1696, (2001).

\bibitem{micu2009}
S. Micu, J. Ortega, L. Rosier, B.-Y. Zhang, Control and stabilization of a family of Boussinesq systems, Discrete Contin. Dyn.
Syst. 24 (2) (2009) 273–313

\bibitem{P1996}
D. H. Peregrine, Calculations of the development of an undular bore, J. Fluid Mechanics 25, 321-330,  (1996).

\bibitem{p1967}
D. H. Peregrine, Long waves on a beach, J. Fluid Mechanics 27, 815-827, (1967).

\bibitem{rosier}
L. Rosier and B.Y. Zhang, Unique continuation property and control for the Benjamin-Bona-Mahony equation on the torus. Journal of Differential Equations, 254, 141-178, (2013).


\bibitem{R} 
{D. Roum\'egoux, A simpletic non-queezing theorem for BBM
equation, \em Dynamics of PDE,} {\bf 7} (2010), 289 - 305.

\bibitem{rusell1967}
D. L. Russell, Nonharmonic Fourier series in the control theory of distributed parameter sytems,  {\em J. Math. Anal. Appl.,} 18 (1967), pp. 542--560.

\bibitem{rusell1985}
 D. L. Russell, Mathematical models for the elastic beam and their control-theoretic implications, in H. Brezis,
M. G. Crandall and F. Kapper (eds), Semigroup Theory and Applications, Longman, New York (1985).


\bibitem{scott1973}
A.C. Scott, F.Y.F. Chu and D.W. McLaughlin, The soliton: A new concept in applied science, Proc. IEEE 61, 1443-1483, (1973).


\bibitem{S-F} 
{C. Seyler and D. Fenstermacher. A symmetric regularized-long-wave equation, \em Phys.
Fluids,} {\bf 27} (1984), 4 - 7.

\bibitem{shang}
Y. Shang, Unique continuation for the symmetric regularized long wave equation. Mathematical methods in the applied sciences 30.4 (2007): 375-388.


\bibitem{zhang}
B.-Y. Zhang. Exact controllability of the generalized Boussinesq equation. In Control and estimation
of distributed parameter systems (Vorau, 1996), volume 126 of Internat. Ser. Numer. Math., pages
297-310. Birkhäuser, Basel, 1998.




\end{thebibliography}
\end{document}